\numberwithin{equation}{section}
\newcommand{\IR}{\mathbb{R}}
\newcommand{\dotcup}{\mathbin{\mathaccent\cdot\cup}}
\newtheorem*{claim}{Claim}
\newtheorem{theorem}{Theorem}[section]
\newtheorem{definition}[theorem]{Definition}
\newtheorem{proposition}[theorem]{Proposition}
\newtheorem{lemma}[theorem]{Lemma}
\newtheorem{corollary}[theorem]{Corollary}
\def\eps{\varepsilon}
\def\al{\aligned}
\def\eal{\endaligned}
\def\be{\begin{equation}}
\def\ee{\end{equation}}
\def\lab{\label}
\def\a{\alpha}
\def\M{{\bf M}}
\def\eps{\varepsilon}
\def\al{\aligned}
\def\p{\partial}
\def\d{\nabla}
\DeclareMathOperator{\diam}{diam}
\DeclareMathOperator{\Ric}{Ric}
\DeclareMathOperator{\Rm}{Rm}
\DeclareMathOperator{\supp}{supp}
\DeclareMathOperator{\vol}{vol}
\DeclareMathOperator{\dist}{dist}
\DeclareMathOperator{\Hess}{Hess}
\newcommand{\rrm}{r_{|{\Rm}|}}
\numberwithin{equation}{section}
\begin{document}

\title[Heat kernel and curvature bounds in Ricci flows]{Heat kernel and curvature bounds in Ricci flows with bounded scalar curvature}
\author{Richard H. Bamler and Qi S. Zhang}
\address{Department of Mathematics, UC Berkeley, Berkeley, CA 94720,  USA}
\email{rbamler@math.berkeley.edu}
\address{Department of Mathematics, University of California, Riverside, CA 92521, USA}
\email{qizhang@math.ucr.edu}
\date{\today}

\begin{abstract}
In this paper we analyze Ricci flows on which the scalar curvature is globally or locally bounded from above by a uniform or time-dependent constant.
On such Ricci flows we establish a new time-derivative bound for solutions to the heat equation.
Based on this bound, we solve several open problems: 1. distance distortion estimates, 2. the existence of a cutoff function, 3. Gaussian bounds for heat kernels, and, 4. a backward pseudolocality theorem, which states that a curvature bound at a later time implies a curvature bound at a slightly earlier time.

Using the backward pseudolocality theorem, we next establish a uniform $L^2$ curvature bound in dimension $4$ and we show that the flow in dimension $4$ converges to an orbifold at a singularity.
We also obtain a stronger $\eps$-regularity theorem for Ricci flows.
This result is particularly useful in the study of K\"ahler Ricci flows on Fano manifolds, where it can be used to derive certain convergence results.
\end{abstract}
\maketitle
\tableofcontents

\section{Introduction}
In this paper we analyze Ricci flows on which the scalar curvature is locally or globally bounded by a time-dependent or time-independent constant.
We will derive new heat kernel and curvature estimates for such flows and point out the interplay between these two types of estimates.

We briefly summarize the main results of this paper.
More details can be found towards the end of the introduction.
Consider a Ricci flow  $(\M, (g_t)_{t \in [0,T)})$, $\partial_t g_t = -2 \Ric_{g_t}$ and assume that the scalar curvature $R$ is locally bounded from above by a given constant $R_0$.
We refer to the statements of the theorems for a precise characterization what we mean by ``locally''. 
Assuming the bound on the scalar curvature, we obtain the following results:
\begin{description}
\item[\it Distance distortion estimate]  The distance between two points $x, y \in \M$ changes by at most a uniform factor on a time-interval whose size depends on $R_0$ and the distance between $x, y$ at the central time (see Theorem \ref{thdistcon}).
This estimate addresses a question that was first raised by R. Hamilton.
\item[\it Construction of a cutoff function] We construct a cutoff function in space-time, whose support is contained in given a parabolic neighborhood and whose gradient, Laplacian and time-derivative are bounded by a universal constant (see Theorem \ref{Thm:cutoffnoLp}).
\item[\it Gaussian bounds on the heat kernel] Assuming a global bound on the scalar curvature, we deduce that the heat kernel $K(x,t;y,s)$ on $\M \times [0,T)$ is bounded from above and below by an expression of the form $a_1 (t-s)^{-n/2} \exp (- a_2 d_s^2 (x,y) /  (t-s))$ for certain positive constants $a_1, a_2$ (see Theorem \ref{thkernelUP}).
\item[\it Backward Pseudolocality Theorem] Named in homage to Perelman's forward pseudolocality theorem, this theorem states that whenever the norm of the Riemannian curvature tensor on an $r$-ball at time $t$ is less than $r^{-2}$, then the Riemannian curvature is less than $K r^{-2}$ on a smaller ball of size $\varepsilon r$ at earlier times $[ t - (\varepsilon r)^2, t]$ (see Theorem \ref{Thm:backwpseudoloc}).
\item[\it Strong $\varepsilon$-regularity theorem] If the local isoperimetric constant at a time-slice is close to the Euclidean constant, then we have a curvature bound at that time (see Corollary \ref{costrongreg}).
\item[\it $L^2$-curvature bound in dimension 4] If the scalar curvature is globally bounded on $\M \times [0, T)$, then the $L^2$-norm of the Riemannian curvature tensor, $\Vert {\Rm} (\cdot, t) \Vert_{L^2(M, g_t)}$, is bounded by a uniform constant, which is independent of time (see Theorem \ref{Thm:L2bound}).
\item[\it Convergence to an orbifold in dimension $4$] If the scalar curvature is globally bounded on $\M \times [0, T)$, then the metric $g_t$ converges to an orbifold with cone singularities as $t \nearrow T$ (see Corollary \ref{Cor:convergenceorbifold}).
\end{description}

Let us motivate the study of Ricci flows with bounded scalar curvature.
Consider a Ricci flow $(\M, (g_t)_{t \in [0,T)})$ that develops a singularity at time $T < \infty$.
It was shown by Hamilton (cf \cite{Ha:0}), that the maximum of the norm of the Riemannian curvature tensor diverges as $t \nearrow T$.
Later, Sesum (cf \cite{Sesum-Ricci}) showed that also the norm of the Ricci tensor has to become unbounded as $t \nearrow T$.
It remained an open questions whether the scalar curvature becomes unbounded as well.
In dimensions 2, 3 and in the K\"ahler case (cf \cite{ZhouZhang-scal}) this is indeed the case.
In order to understand the higher dimensional or non-K\"ahler case, it becomes natural to analyze Ricci flows with uniformly bounded scalar curvature and to try to rule out the formation of a singularity.

Another motivation for the scalar curvature bound arises in the study of K\"ahler-Ricci flows on Fano manifolds.
Let $\M$ be a complex manifold with $c_1 ( \M ) > 0$ and $g_0$ a K\"ahler-metric on $\M$ such that the corresponding K\"ahler form satisfies $\omega_0 \in c_1 ( \M)$.
Then $g_0$ can be evolved to a Ricci flow on the time-interval $[0,\frac12)$ and the scalar curvature satisfies the (time-dependent) bound $- C < R( \cdot, t) < C (\frac12 - t)^{-1}$ (cf \cite{ST:1}).
In other words, if we consider the volume normalized Ricci flow $(\widetilde{g}_t )_{t \in [0, \infty)}$, $\widetilde{g}_t = e^{t} g_{ (1- e^{-t}) / 2}$,
\[ \partial_t \widetilde{g}_t = - \Ric + \widetilde{g}_t, \]
then the scalar curvature of $\widetilde{g}_t$ satisfies $- C/t < R (\cdot, t) < C$ for some uniform constant $C$.

Ricci flows with bounded scalar curvature have been previously studied in \cite{CH:2}, \cite{CW:1}, \cite{CW:2}, \cite{CW:3}, \cite{Wa:1}, \cite{Z11:1}.
We will particularly use a result obtained by the second author in \cite{Z11:1}, which gives us upper volume bounds for small geodesic balls and certain upper an lower bounds on heat kernels for Ricci flows with bounded scalar curvature.

Before presenting our main results in detail, we introduce some notation that we will frequently use in this paper. 
We use $\M$ to denote a, mostly compact, Riemannian manifold and $g_t$ to denote the metric at time $t$.
For any two points $x, y \in \M$ we denote by $d_t (x,y)$ the distance between $x, y$ with respect to $g_t$ and for any $r \geq 0$ we let $B(x, t, r) = \{ y \in \M \; : \;  d_t (x,y) < r \}$ be the geodesic ball around $x$ of radius $r$ at time $t$.
For any $(x,t) \in \M \times [0,T)$, $r \geq 0$ and $\Delta t \in \IR$ we denote by
\[ P(x,t, r, \Delta t) = B(x,t,r) \times [t, t+ \Delta t] \quad \text{or} \quad B(x,t,r) \times [t + \Delta t, t], \]
depending on the sign of $\Delta t$, the \emph{parabolic neighborhood} around $(x,t)$.
For any measurable subset $S \subset \M$ and any time $t$, we denote by $|S|_t$ the measure of $S$ with respect to the metric $g_t$.
We use $\nabla$ and $\Delta$ to denote the gradient and the Laplace-Beltrami operators.  
Sometimes we will employ the notation $\nabla_t$, $\Delta_t$ or $\nabla_{g_t}$, $\Delta_{g_t}$ to emphasize the dependence on the time/metric.
We also reserve $R=R(x, t)$ for the scalar curvature of $g_t$, $\Ric = R_{ij}$ for the Ricci curvature and $\Rm = \Rm_{ijkl}$ for the Riemannian curvature tensor.
Lastly, we define $\nu [ g, \tau ] := \inf_{0 < \tau' < \tau} \mu [g, \tau']$, where $\mu [g,\tau]$ denotes Perelman's $\mu$-functional.
For more details on these functionals see section \ref{sec:Preliminaries}.

The first main result of this paper is a local bound on the distortion of the distance function between two points, given a local bound on the scalar curvature.
This result solves a basic question in Ricci flow, which was first raised by R. Hamilton (cf \cite[section 17]{Ha:1}), under a minimal curvature assumption.
We remark that previously, distance distortion bounds have been obtained under certain growth conditions on the curvature tensor or boundedness of parts of the Ricci curvature, see  \cite[section 17]{Ha:1},  \cite[Lemma 8.3]{P:1}, \cite{Si:1} and \cite{Tian-Wang}.
Observe that the following distance distortion bound becomes false if we drop the scalar curvature bound, as one can observe near a $3$-dimensional horn (for more details see the comment after the proof of Theorem \ref{thdistcon}).

\begin{theorem}[short-time distance distortion estimate] \lab{thdistcon} 
Let $(\M^n, (g_t)_{t \in [0,T)})$, $T < \infty$ be a Ricci flow on a compact $n$-manifold.
Then there is a constant $0 < \alpha <1$, which only depends on $\nu [g_0, 2T], n$, such that the following holds:

Suppose that $t_0 \in [0,T)$, $0 < r_0 \leq  \sqrt{t_0}$, let $x_0, y_0 \in \M$ be points with $d_{t_0} (x_0, y_0 ) \geq r_0$ and let $t \in [t_0 - \alpha r_0^2, \min \{ t_0 + \alpha r_0^2, T \})$.
Assume that $R \leq r_0^{-2}$ on $U \times [t, t_0]$ or $U \times [t_0, t]$, depending on whether $t \leq t_0$ or $t \geq t_0$, where $U \subset \M$ is a subset with the property that $U$ contains a time-$t'$ minimizing geodesic between $x_0$ and $y_0$ for all times $t'$ between $t_0$ and $t$.

Then
\[ \alpha d_{t_0} (x_0, y_0) < d_t (x_0, y_0) < \alpha^{-1} d_{t_0} (x_0, y_0). \]
\end{theorem}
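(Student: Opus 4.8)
The plan is to reduce the statement to a differential inequality for $t\mapsto\log d_t(x_0,y_0)$ and to estimate the resulting Ricci integral along minimizing geodesics using only the scalar curvature bound together with the volume and heat kernel bounds coming from Perelman's $\nu$-functional. After parabolic rescaling we may assume $r_0=1$, so that $t_0\geq 1$, $L_0:=d_{t_0}(x_0,y_0)\geq 1$, and $R\leq 1$ on the region in question; since $t\geq(1-\alpha)r_0^2>0$, Hamilton's maximum-principle bound $R_{\min}(t)\geq -n/(2t)$ moreover gives $-C\leq R\leq 1$ along every geodesic we shall use, with $C=C(n)$. By a standard continuity (bootstrap) argument it then suffices to produce $C_0=C_0(\nu[g_0,2T],n)$ such that, as long as $d_{t'}(x_0,y_0)\in(\tfrac12 L_0,2L_0)$ and the $g_{t'}$-minimizing geodesics lie in $U$, one has $\big|\tfrac{d}{dt}\log d_t(x_0,y_0)\big|\leq C_0$ at a.e.\ such $t$ (working with forward difference quotients and taking the $\min$ over minimizing geodesics at the non-differentiable times, as in \cite{Ha:1}, \cite{P:1}); one then takes $\alpha$ small enough that $e^{C_0\alpha}<\min\{2,\alpha^{-1}\}$.

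Fix a time $t$ and a $g_t$-minimizing unit-speed geodesic $\gamma:[0,L]\to\M$, $L=d_t(x_0,y_0)$, realizing the forward derivative, so that $\tfrac{d}{dt}\log d_t(x_0,y_0)=-\tfrac1L\int_0^L\Ric(\gamma',\gamma')\,ds$. The key reduction is that, because of $R\geq -C$ and the second-variation (index form) inequality $\int_0^L\phi^2\Ric(\gamma',\gamma')\,ds\leq(n-1)\int_0^L(\phi')^2\,ds$ — obtained by applying $I(\phi E_i,\phi E_i)\geq 0$ to an orthonormal parallel frame $E_2,\dots,E_n$ perpendicular to $\gamma'$, summing over $i$, and taking $\phi=1$ off unit neighborhoods of the endpoints — the quantity $\big|\int_0^L\Ric(\gamma',\gamma')\,ds\big|$ is bounded by $CL$ plus $C$ times the contribution of $\gamma$ inside $B:=B(x_0,t,2)\cup B(y_0,t,2)$. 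Concretely it suffices to show
\[ \int_{\gamma\cap B}|\Ric(\gamma',\gamma')|\,ds\leq C(\nu[g_0,2T],n). \]

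The crux is this last bound, which cannot be obtained pointwise: with only an upper bound on the full scalar curvature the sectional curvatures transverse to $\gamma$ near $x_0$ and $y_0$ are a priori unbounded of either sign. The approach I would take is to replace the endpoints by small balls and average: for a dimensional constant $\theta$ and $x\in B(x_0,t,\theta)$, $y\in B(y_0,t,\theta)$, the volume bounds $c\rho^n\leq|B(\cdot,t,\rho)|_t\leq C\rho^n$ of \cite{Z11:1} (valid because $R\leq\rho^{-2}$ on these balls, with constants depending only on $\nu[g_0,2T]$ and $n$) make $d_t(x_0,y_0)$ comparable to the average of $d_t(x,y)$, while a Cheeger--Colding-type segment inequality bounds $\tfrac{d}{dt}$ of this average by $C\theta^{-n}$ times $\int|\Ric|$ over a neighborhood of $\gamma$; the latter is controlled, via Cauchy--Schwarz and the Bishop-type volume bound, by $L$ times the root of the spacetime average of $|\Ric|^2$. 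The missing spacetime $L^2$-Ricci estimate is produced by integrating the evolution equation $\partial_t R=\Delta R+2|\Ric|^2$ against the square of a spacetime cutoff supported in the parabolic region where $R\leq 1$: every error term is $R$ (bounded there) times first/second space derivatives or a time derivative of the cutoff, hence is controlled by a cutoff function whose derivatives are bounded by a universal constant — which is exactly the content of the new time-derivative estimate for the heat equation, the main technical tool of the paper. Assembling these pieces yields $\big|\int_\gamma\Ric(\gamma',\gamma')\big|\leq C_0L$, and the theorem follows.

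The main obstacle is precisely the step in the previous paragraph, and it is there that the scalar curvature hypothesis and the $\nu$-functional genuinely enter. The remaining care is technical: (i) one must reconcile the averaging argument with the minimal hypothesis that $U$ only contains the geodesics between $x_0$ and $y_0$ themselves — this requires either enlarging the region slightly and checking that nearby minimizing geodesics stay close, or localizing the $L^2$-Ricci estimate to a thin tube about $\gamma$ — and (ii) one must handle the times at which $t\mapsto d_t(x_0,y_0)$ is not differentiable and the regime $L<2$, both by the standard Hamilton--Perelman devices.
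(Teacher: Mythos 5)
Your proposal takes a genuinely different route from the paper's. The paper never differentiates $d_t(x_0,y_0)$ and never attempts to control $\int_\gamma\Ric(\gamma',\gamma')$; instead (Step 1) it picks a Perelman source point $z$ with $\ell_{(x_0,t_0)}(z,t_0-\tfrac12)\le n/2$, considers the forward heat kernel $K(\cdot,\cdot;z,t_0-\tfrac12)$, uses the new $|\partial_t K|$ bound of Lemma~\ref{leddu} together with the gradient estimate (\ref{eq:Prel-derbound}) to show $K$ remains $\gtrsim c$ on a fixed-width tube around $\gamma$ for all $t$ in the interval, and then deduces the upper bound on $d_t(x_0,y_0)$ from $\int_\M K\le e^n$, no-collapsing, and a Vitali covering of $\gamma$; the lower bound (Step 3) then comes from the upper bound by a continuity/time-reversal device, not from any estimate on $\Ric$. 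This completely sidesteps the analytic difficulty that a scalar curvature bound gives no pointwise control on $\Ric(\gamma',\gamma')$.

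Your reduction to $|\frac{d}{dt}\log d_t|\le C_0$ has two gaps that I don't see how to close with the stated tools. First, the index-form inequality $\int\phi^2\Ric(\gamma',\gamma')\le(n-1)\int(\phi')^2$ is one-sided: together with $R\ge -C$ it can bound $\int_\gamma\Ric(\gamma',\gamma')$ from above (modulo endpoint contributions), i.e.\ $\partial_t d_t$ from below, but it gives nothing for $\partial_t d_t\le C$, since no lower bound on $\Ric(\gamma',\gamma')$ is extractable from $R\ge -C$ — the transverse sectional curvatures along $\gamma$ are uncontrolled in sign and magnitude. So the absolute value on the left of your displayed inequality is not justified, and you would in any case still need a separate mechanism (such as the paper's Step~3) for the other direction. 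Second, the Cheeger--Colding segment inequality is a consequence of Bishop--Gromov relative volume comparison and hence requires a Ricci lower bound — precisely the control you are trying to establish. Under a scalar curvature bound alone the paper only has non-collapsing (\ref{eq:kappa1}) and non-inflating (\ref{eq:kappa2}), i.e.\ two-sided volume bounds and doubling, which are not enough to prove a segment (or Poincar\'e-type) inequality. Using it here is therefore circular. The spacetime $L^2$-Ricci bound you obtain from integrating $\partial_t R=\Delta R+2|\Ric|^2$ against a cutoff is sound (a variant is used in Lemma~\ref{Lem:goodRicbound}), but without a valid segment inequality there is no way to localize it to the geodesic, and the scale issue remains: that estimate is over a single unit parabolic neighborhood, whereas the geodesic has length $L$ which can be arbitrarily large compared to $r_0$, so one would also have to cover $\gamma$ by many such neighborhoods and sum, carefully matching the $L$-normalization in $\frac{d}{dt}\log d_t$.
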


Note that $U$ could for example be the ball $B(x_0, t_0, \alpha^{-1} d_{t_0} (x_0, y_0))$.
The upper bound of Theorem \ref{thdistcon} can be generalized for longer time-intervals.
For simplicity, we are phrasing this result using a \emph{global} scalar curvature bound, but localizations are possible.

\begin{corollary}[long-time distance distortion estimate] \lab{coBDP}
Let $(\M^n, (g_t)_{t \in [0,T)})$, $T< \infty$ be a Ricci flow on a compact $n$-manifold that satisfies $R\leq R_0 < \infty$ everywhere.
Then there is a constant $A < \infty$, which only depends on $\nu[g_0, 2T], n$, such that the following holds:

Suppose that $t_0 \in [0,T)$, let $x_0, y_0 \in \M$ be points and set $r_0 = d_{t_0} (x_0, y_0 )$.
For any $t \in [r_0^2, T)$ with $A^2 (r_0^2 + | t - t_0|)  \leq \min \{ R_0^{-1}, t_0 \}$ we have
\[ d_t (x_0, y_0 ) < A \sqrt{ r_0 ^2+ | t - t_0|}. \]
\end{corollary}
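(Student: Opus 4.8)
The plan is to bootstrap Theorem \ref{thdistcon} by iterating it along a geometric ladder of distance scales, tracking the \emph{first times} at which $d_\cdot(x_0,y_0)$ reaches each scale rather than the distance itself. We may assume $x_0 \neq y_0$, so $r_0 > 0$, and --- after replacing $(g_s)$ by its time reversal if necessary, which is legitimate since the hypothesis and conclusion of Theorem \ref{thdistcon} are symmetric in the two time directions --- that $t \geq t_0$. Recall that $s \mapsto d_s(x_0, y_0)$ is continuous, as $(g_s)$ is a smooth family on the compact manifold $\M$. Let $\alpha \in (0,1)$ be the constant of Theorem \ref{thdistcon}, set $\lambda_k := \alpha^{-k} r_0$ for $k \geq 0$ (the ratio $\alpha^{-1}$ is chosen to match the distortion factor in that theorem), and put $\theta_k := \inf\{ s \in [t_0, T) : d_s(x_0,y_0) \geq \lambda_k \}$, with $\theta_k := T$ if this set is empty. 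Let $K$ be the largest index with $\lambda_K \leq \min\{ R_0^{-1/2}, \sqrt{t_0}\}$; since $A^2(r_0^2 + |t - t_0|) \leq \min\{R_0^{-1}, t_0\}$ forces $r_0 \leq \min\{R_0^{-1/2}, \sqrt{t_0}\}$, we have $K \geq 0$. Note that $\theta_0 = t_0$, and $d_{\theta_k}(x_0,y_0) = \lambda_k$ whenever $\theta_k < T$.

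The heart of the proof is the claim, proved by induction on $k \in \{0, \dots, K+1\}$, that either $\theta_k = T$ or $\theta_k - t_0 \geq \alpha \sum_{j=0}^{k-1} \lambda_j^2$. In the inductive step (from $k$ to $k+1$, $k \leq K$) we may assume $\theta_k < T$, so $d_{\theta_k}(x_0,y_0) = \lambda_k$, and apply Theorem \ref{thdistcon} at the central time $\theta_k$ with radius $\lambda_k$ and $U = \M$: its hypotheses hold because $0 < \lambda_k \leq \sqrt{t_0} \leq \sqrt{\theta_k}$, $d_{\theta_k}(x_0,y_0) = \lambda_k$, $R \leq R_0 \leq \lambda_k^{-2}$, and $\M$ is compact. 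The conclusion yields $d_s(x_0,y_0) < \alpha^{-1} \lambda_k = \lambda_{k+1}$ for every $s \in [\theta_k, \min\{\theta_k + \alpha \lambda_k^2, T\})$, so $\theta_{k+1} \geq \min\{\theta_k + \alpha \lambda_k^2, T\}$; combined with the inductive hypothesis this gives the claim for $k+1$. Summing the geometric series, $\alpha \sum_{j<k} \lambda_j^2 = c_0(\lambda_k^2 - r_0^2)$ with $c_0 := \alpha^3/(1 - \alpha^2) > 0$; so for $0 \leq k \leq K+1$ with $\theta_k < T$ we have $\theta_k - t_0 \geq c_0(\lambda_k^2 - r_0^2)$.

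It remains to extract the distance bound. Provided $A$ is taken large in terms of $\alpha$ alone, the claim forces $\theta_{K+1} > t$: indeed $\lambda_{K+1} > \min\{R_0^{-1/2}, \sqrt{t_0}\} \geq A\sqrt{r_0^2 + (t - t_0)}$ by the hypothesis on $A$, so if $\theta_{K+1} < T$ then $\theta_{K+1} - t_0 \geq c_0(\lambda_{K+1}^2 - r_0^2) > t - t_0$ once $A^2 \geq 1 + c_0^{-1}$. Hence $d_t(x_0,y_0) < \lambda_{K+1}$. If $d_t(x_0,y_0) < r_0$ we are done since $r_0 < A\sqrt{r_0^2 + (t-t_0)}$; otherwise let $k^* \geq 0$ satisfy $\lambda_{k^*} \leq d_t(x_0,y_0) < \lambda_{k^*+1}$, so $k^* \leq K$ and, since $d_t(x_0,y_0) \geq \lambda_{k^*}$, also $\theta_{k^*} \leq t < T$; the claim then gives $\lambda_{k^*}^2 \leq r_0^2 + c_0^{-1}(t - t_0)$, so $d_t(x_0,y_0)^2 < \lambda_{k^*+1}^2 = \alpha^{-2}\lambda_{k^*}^2 \leq \alpha^{-2}\max\{1, c_0^{-1}\}\,(r_0^2 + (t - t_0))$. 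Choosing $A$ to be at least $\alpha^{-1}\max\{1, c_0^{-1/2}\}$ and at least $\sqrt{1 + c_0^{-1}}$ --- a constant depending only on $\alpha$, hence on $\nu[g_0, 2T]$ and $n$ --- finishes the case $t \geq t_0$. The case $t < t_0$ is the mirror image after time reversal; the only additional bookkeeping there is that one uses the hypothesis $t \geq r_0^2$ to guarantee $\lambda_0 = r_0 \leq \sqrt{t}$, so that the backward analogue of the index $K$ is nonnegative.

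The main obstacle, and the reason a naive step-by-step continuity argument does not close, is that Theorem \ref{thdistcon} bounds the later distance by $\alpha^{-1}$ times $d_{t_0}(x_0,y_0)$ --- the distance at the \emph{central} time --- while its admissible time window $\alpha r_0^2$ scales with the auxiliary radius $r_0 \leq d_{t_0}(x_0,y_0)$. One is tempted to iterate the theorem with radius equal to the current distance, but the current distance carries no a priori lower bound (it may transiently become tiny), so the windows can collapse and control is lost before reaching $t$. Passing instead to the first-hitting times $\theta_k$ of the ladder $\lambda_k = \alpha^{-k}r_0$ removes this difficulty: at $\theta_k$ the distance equals $\lambda_k$ exactly, which is simultaneously a legitimate radius and the quantity being propagated, so the theorem applies without waste and the windows $\alpha\lambda_k^2$ grow geometrically, summing to the target $\asymp \sqrt{r_0^2 + |t - t_0|}$. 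The remaining work is the routine verification of the scale constraints $\lambda_k \leq R_0^{-1/2}$ and $\lambda_k^2 \leq \theta_k$ along the ladder, which is precisely where the exact form of the hypothesis $A^2(r_0^2 + |t-t_0|) \leq \min\{R_0^{-1}, t_0\}$ (and, backward in time, $t \geq r_0^2$) enters.
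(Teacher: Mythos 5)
Your proof is correct and does establish the corollary, but by a genuinely different route than the paper's. The paper argues by contradiction: it locates the first time $t_1$ at which the claimed bound fails, so that $d_{t_1}(x_0,y_0) = A\sqrt{r_0^2+|t_1-t_0|}$, and then makes a \emph{single} application of Theorem~\ref{thdistcon} centered at $t_1$ with radius $A\sqrt{r_0^2+|t_1-t_0|}$, exploiting the \emph{lower}-bound direction $d_s > \alpha d_{t_1}$. The specific constant $A = \max\{\alpha^{-1/2},\alpha^{-1}\}$ is chosen so that the admissible backward time window $\alpha A^2(r_0^2+|t_1-t_0|)$ reaches past $t_0$ and the coefficient $\alpha A\geq 1$, giving $d_{t_0}>r_0$, a contradiction. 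Your argument instead bootstraps forward along the geometric ladder $\lambda_k=\alpha^{-k}r_0$, using the first-hitting times $\theta_k$ together with the \emph{upper}-bound direction of Theorem~\ref{thdistcon} and summing the window widths $\alpha\lambda_k^2$. Both approaches are symmetric in spirit (a single application at the scale of the eventual violation versus iterated applications at the current scale), and your observation about why a naive iteration fails --- that the current distance carries no a priori lower bound, so the windows may collapse --- is the same reason the paper jumps directly to the violation scale; passing to the first-hitting times is your way of circumventing it. The paper's route is shorter. One caveat on your last paragraph: describing the $t<t_0$ case as a ``mirror image after time reversal'' with ``the only additional bookkeeping'' being $r_0\leq\sqrt t$ undersells what is needed. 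In the backward ladder the hitting times $\theta_k$ decrease, so the constraint $\lambda_k\leq\sqrt{\theta_k}$ required to apply Theorem~\ref{thdistcon} at step $k$ is no longer a consequence of $\lambda_K\leq\sqrt{t_0}$; one should define $K$ by $\lambda_K\leq\min\{R_0^{-1/2},\sqrt t\}$ instead, and to rule out $\theta_{K+1}$ one must check $\min\{R_0^{-1},t\}\geq r_0^2+c_0^{-1}(t_0-t)$, which follows from the hypothesis $A^2(r_0^2+(t_0-t))\leq\min\{R_0^{-1},t_0\}$ together with $A^2\geq 1+c_0^{-1}$ but requires a few extra lines. There is no genuine gap, but this step deserves to be written out rather than dismissed.
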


Similar techniques also imply the existence of a well behaved space-time cutoff function.

\begin{theorem}[cutoff function] \label{Thm:cutoffnoLp}
Let $(\M^n, (g_t)_{t \in [0,T)} )$, $T < \infty$ be a Ricci flow on a compact $n$-manifold.
Then there is a constant $\rho > 0$, which only depends on $\nu[g_0, 2T]$ and $n$, such that the following holds:

Let $(x_0, t_0) \in \M \times [0,T)$ and $0 < r_0 \leq \sqrt{t_0}$ and let $0 < \tau \leq  \rho^2 r_0^2$.
Assume that $R \leq r_0^{-2}$ on $P(x_0, t_0, r_0, - \tau)$.
Then there is a function $\phi \in C^\infty (\M \times [t_0 - \tau,t_0])$ with the following properties:
\begin{enumerate}[label=(\alph*)]
\item $0 \leq \phi < 1$ everywhere.
\item $\phi > \rho$ on $P(x_0, t_0, \rho r_0, - \tau)$.
\item $\phi = 0$ on $(\M \setminus B(x_0, t_0, r_0) ) \times [t_0 - \tau, t_0]$.
\item $|\nabla \phi | < r_0^{-1}$ and $|\partial_t \phi| + |\Delta \phi | < r_0^{-2}$ everywhere.
\end{enumerate}
\end{theorem}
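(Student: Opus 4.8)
The plan is to obtain $\phi$ as a fixed cutoff profile composed with a solution of the heat equation. Fix constants $0<c_1<c_2<1$ (depending only on $n$ and $\nu[g_0,2T]$) and a smooth function $\chi$ on $\M$ with $0\le\chi\le1$, $\chi\equiv1$ on $B(x_0,t_0,c_1 r_0)$, $\supp\chi\subset B(x_0,t_0,c_2 r_0)$, and $|\nabla\chi|\le Cr_0^{-1}$, $|\Delta\chi|\le Cr_0^{-2}$; let $v$ solve $\partial_t v=\Delta_{g_t}v$ on $\M\times[t_0-\tau,t_0]$ with $v(\cdot,t_0-\tau)=\chi$. By the maximum principle $0\le v\le1$. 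Finally set $\phi:=F(v)$, where $F\colon[0,1]\to[0,1)$ is smooth and nondecreasing, vanishes on $[0,\theta]$, equals $2\rho$ on $[\tfrac12,1]$, and has $\|F'\|_\infty,\|F''\|_\infty$ as small as we like (at the cost of making $\rho$ small). Here one tacitly uses Theorem \ref{thdistcon} (applicable since $\tau\le\rho^2 r_0^2$ and $r_0\le\sqrt{t_0}$) to know that balls around $x_0$ of radius at most $r_0$ are comparable at all times $t\in[t_0-\tau,t_0]$.

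For (b) and (c) the point is that heat disperses only on the scale $\sqrt\tau\le\rho r_0\ll r_0$. Both $v$ and $1-v$ solve the heat equation with initial data supported, respectively, inside $B(x_0,t_0,c_2 r_0)$ and inside $\M\setminus B(x_0,t_0,c_1 r_0)$; using the standard integrated Gaussian upper bound for the heat kernel along Ricci flow, $\int_{\{d_s(x,z)\ge r\}}K(x,t;z,s)\,dg_s(z)\le C\exp(-r^2/C(t-s))$ (which needs no curvature hypothesis), together with Theorem \ref{thdistcon} to pass between time-$t_0$ and time-$t$ distances (with the usual bootstrap keeping the relevant minimizing geodesics inside $B(x_0,t_0,r_0)$), one obtains $v\le\eps(\rho)$ on $(\M\setminus B(x_0,t_0,r_0))\times[t_0-\tau,t_0]$ and $1-v\le\eps(\rho)$ on $P(x_0,t_0,\rho r_0,-\tau)$, where $\eps(\rho)\to0$ as $\rho\to0$. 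Fixing $\rho$ small enough that $\eps(\rho)<\theta<\tfrac12$, property (c) holds since $\phi=F(v)=0$ wherever $v\le\theta$, property (b) holds since there $v\ge1-\eps(\rho)\ge\tfrac12$ forces $\phi=2\rho>\rho$, and (a) is immediate from $0\le v\le1$.

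For (d), the gradient bound is the easy half: under Ricci flow $(\partial_t-\Delta)|\nabla v|^2=-2|\Hess v|^2\le0$ (the Ricci terms from the evolution of $g$ and from Bochner's formula cancel), so the maximum principle gives $|\nabla v|(\cdot,t)\le\sup_\M|\nabla\chi|\le Cr_0^{-1}$ with no curvature assumption, whence $|\nabla\phi|=|F'(v)|\,|\nabla v|\le C\|F'\|_\infty r_0^{-1}<r_0^{-1}$ for $\rho$ small. For the remaining bounds we invoke the new a priori estimate on $\partial_t v$ for bounded solutions of the heat equation (the central analytic input of this paper, proved using the $\nu$-noncollapsing and $R\le r_0^{-2}$), which yields $|\partial_t v|=|\Delta v|\le Cr_0^{-2}$ on $\M\times[t_0-\tau,t_0]$; then $|\partial_t\phi|=|F'(v)|\,|\Delta v|$ and $|\Delta\phi|\le|F'(v)|\,|\Delta v|+|F''(v)|\,|\nabla v|^2$, so $|\partial_t\phi|+|\Delta\phi|\le C(\|F'\|_\infty+\|F''\|_\infty)r_0^{-2}<r_0^{-2}$ for $\rho$ small. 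All constants depend only on $n$ and $\nu[g_0,2T]$; the order of choices is $c_1,c_2$ and the profile $F$ first, then $\rho$ small enough to absorb everything.

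The step I expect to require the most work is the sharp bound $|\Delta v|\le Cr_0^{-2}$ all the way down to the initial time $t_0-\tau$. The heat equation only regularizes over positive elapsed time, and without a Ricci lower bound one cannot carry a Hessian bound forward by the maximum principle (the Ricci term in the evolution of $\Delta v$ does not cancel); moreover the bound $|\partial_t v|\le C/(t-(t_0-\tau))$ that general bounded initial data would give is useless here, since $\tau$ may be as small as $\rho^2 r_0^2$. This is exactly what forces the use of the paper's new time-derivative estimate for the heat equation, together with the care needed to choose $\chi$ with $|\Delta\chi|\le Cr_0^{-2}$ in the first place. Once $|\nabla v|$ and $|\partial_t v|$ are controlled, the rest is bookkeeping with Theorem \ref{thdistcon} and the integrated heat-kernel bound.
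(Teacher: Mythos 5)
Your proposal has a genuine gap, and it sits precisely at the two points you flag as the hardest.

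First, you posit a time-$t_0$ function $\chi$ with $|\Delta\chi|\le Cr_0^{-2}$. In the setting of this theorem there is no elementary construction of such a $\chi$: the standard $\chi=f(d_{t_0}(x_0,\cdot))$ gives $\Delta\chi=f'\Delta d+f''$, and $\Delta d$ is not controlled without a Ricci (not scalar) lower bound, which is exactly what is unavailable here. Producing a compactly supported function with a pointwise Laplacian bound of size $r_0^{-2}$ is essentially the content of the theorem (at the single time $t_0$), so asserting $\chi$ at the outset is circular.

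Second, even granting $\chi$, the invocation of Lemma~\ref{leddu} does not close. Part~(a) yields $|\Delta v|(\cdot,t)\lesssim a/(t-(t_0-\tau))$, which blows up at $t=t_0-\tau$ and, since $\tau$ may be as small as any positive number, is useless on the bulk of $[t_0-\tau,t_0]$. Part~(b) would propagate the initial Laplacian bound forward, but it requires $u_0>0$ everywhere together with a bound on $\sup_\M|\nabla u_0|^2/u_0$; this is incompatible with $\chi$ vanishing outside $B(x_0,t_0,c_2r_0)$. If instead you start the flow at $t_0-c r_0^2$ so that Lemma~\ref{leddu}(a) gives a uniform $r_0^{-2}$ bound on $[t_0-\tau,t_0]$, the initial support then spreads over a scale comparable to $r_0$ and property~(c) fails. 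These two requirements (long enough runway for the Laplacian bound; tight enough localization for the support condition) are in direct tension for any initial data of cutoff type, and your proposal does not resolve it.

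The paper's construction is structured precisely to escape this tension. Instead of evolving a cutoff, it takes the forward heat kernel $K(\cdot,\cdot;z,t_0-\theta)$ for a universal $\theta$ and a point $z$ chosen (via Perelman's reduced distance) so that $K(x_0,t_0)\gtrsim\theta^{-n/2}$. Being a positive solution with elapsed time at least $\frac12\theta$ on $[t_0-\tau,t_0]$, $K$ satisfies $|\Delta K|,|\partial_t K|\lesssim\theta^{-n/2-1}$ there by Lemma~\ref{leddu}(a), with no initial-time degeneracy. Localization is not obtained from an initial support condition but from a packing argument: using the gradient estimate on $\sqrt{\log(B\theta^{-n/2}/K)}$ together with $\kappa$-noncollapsing and the $L^1$ bound $\int_\M K\,dg_t\lesssim1$, one shows that the connected component of $\{K(\cdot,t_0)>\tfrac12 B_0^{-1}\theta^{-n/2}\}$ through $x_0$ lies inside $B(x_0,t_0,r_0)$. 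The cutoff is then a truncation of $K$ at a super-level, which inherits the derivative bounds. Your fixed-profile composition $F(v)$ for the final smoothing is harmless; the missing ingredients are the construction that replaces $\chi$ and the packing argument that replaces the Gaussian-tail estimate on the support.
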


The proofs of Theorem \ref{thdistcon}, Corollary \ref{coBDP} and Theorem \ref{Thm:cutoffnoLp} can be found in section \ref{sec:heatkernelcutoff}.

Next, we analyze the kernel $K(x,t; y,s)$ for the heat equation coupled with Ricci flow and establish Gaussian bounds on $K(x,t; y,s)$ and its gradient.
This addresses a question of Hein and Naber (cf \cite[Remark 1.15]{HN:1}).
Over the last few decades, similar questions have been subject to active research, especially after Li-Yau's paper \cite{LY:1}.
Among numerous useful papers, let us mention \cite{BCG:1}, \cite{ChH:1}, \cite{GH:2}, \cite{LT:1}, \cite{HN:1}, the books \cite{L:1} and \cite{Gr:2} and the reference therein.
Previous bounds usually relied on boundedness assumptions of the Ricci curvature and distance functions, see for instance \cite{Z06:1}.
Our proof makes use of a recent integral bound for the heat kernel obtained by Hein-Naber in \cite{HN:1} as well as our distance distortion bounds.
We mention that a lower bound on the heat kernel has been proven in \cite{Z11:1}, which matches the upper bound in this paper up to constants. 
Since many geometric quantities such as the scalar curvature obey equations of heat-type, we expect this result to have further applications.
For example, our bounds enable us to convert integral curvature bounds into pointwise bounds in certain settings.

\begin{theorem} \label{thkernelUP} 
Let $(\M^n, (g_t)_{t \in [0,T)})$, $T < \infty$ be a Ricci flow on a compact $n$-manifold that satisfies $R \leq R_0 < \infty$ everywhere.
Then for any $A < \infty$ there are constants $C_1 = C_1(A), C_2 = C_2(A) < \infty$, which only depend on $A, \nu[g_0, 2T], n$, such that the following holds:

Let $K(x, t; y, s)$ be the fundamental solution of the heat equation coupled with the Ricci flow (see section \ref{sec:Preliminaries} for more details), where $0 \leq s < t < T$.
Suppose that $t-s \leq A R_0^{-1}$ and $s > A^{-1} (t-s)$.
Then
\[
\al  
K(x, t; y, s) &> \frac{1}{C_1(t-s)^{n/2}} \exp \Big( {- \frac{C_2 d^2_s (x, y)}{t-s}} \Big), \\
K(x, t; y, s) &< \frac{C_1}{(t-s)^{n/2}} \exp \Big({ - \frac{d^2_s (x, y)}{C_2 (t-s)}} \Big),\\
 |\nabla_x K(x, t; y, s) |_{g_t} &<
\frac{C_1}{(t-s)^{(n+1)/2}} \exp \Big( {- \frac{d^2_s (x, y)}{C_2 (t-s)}} \Big). \eal
\] 
In the last line the gradient is taken with respect to the metric $g_t$.
\end{theorem}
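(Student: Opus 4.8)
The plan is to establish the upper Gaussian bound on $K$ first, deduce the gradient bound from it, and recover the lower bound essentially as in \cite{Z11:1}. Two uniform geometric inputs drive everything: a Sobolev inequality on every time-slice $(\M,g_t)$, which follows from $\nu[g_0,2T]>-\infty$ together with $R\le R_0$ and the fact that $R$ is automatically bounded below along the flow ($R\ge R_{\min}(s')\ge -n/(2s')$); and Perelman's no-local-collapsing, giving $|B(x,t,r)|_t\ge c\,r^n$ up to the relevant scale, complemented by the volume \emph{upper} bound $|B(x,t,r)|_t\le C\,r^n$ from \cite{Z11:1}. After parabolic rescaling one always works at unit scale, and the hypotheses $t-s\le AR_0^{-1}$ and $s>A^{-1}(t-s)$ are exactly what guarantee that, at unit scale, both the bound on $R$ and the size of the available distance-distortion window are controlled by $A$; this is why the constants depend only on $A,\nu[g_0,2T],n$.

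For the upper bound I would view $v:=K(x,t;\cdot,\cdot)$ as a nonnegative solution of the conjugate heat equation $\partial_s v=-\Delta_{g_s}v+Rv$ in its backward variables; reversing time turns this into a forward parabolic equation with bounded zeroth order coefficient, so the parabolic mean-value (Moser) inequality applies on the forward cylinder $Q^+=B(y,s,\delta\sqrt{t-s})\times[s,s+\delta^2(t-s)]$ for small $\delta=\delta(A,\nu,n)$, giving $K(x,t;y,s)\le C|Q^+|^{-1}\int_{Q^+}K$. The denominator is $\ge c(t-s)^{(n+2)/2}$ by no-local-collapsing (and $R$ bounded, so volumes change by bounded factors over $[s,s+\delta^2(t-s)]$). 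For the numerator: when $d_s(x,y)\lesssim\sqrt{t-s}$ one uses $\int_{Q^+}K\le\int_s^{s+\delta^2(t-s)}\int_\M K(x,t;\cdot,s')\,dg_{s'}\,ds'=\delta^2(t-s)$ by conservation of the heat-kernel measure; when $d_s(x,y)\gg\sqrt{t-s}$, the triangle inequality with the distance-distortion estimate (Theorem \ref{thdistcon}, iterated over $O(A)$ subintervals if $t-s$ exceeds a single distortion window) shows $B(y,s,\delta\sqrt{t-s})\subseteq\{z:d_{s'}(x,z)\ge c\,d_s(x,y)\}$ for all $s'\in[s,s+\delta^2(t-s)]$, and then the integral Gaussian concentration bound of Hein--Naber \cite{HN:1} for the conjugate heat-kernel measure, $\int_{\{d_{s'}(x,\cdot)\ge\rho\}}K(x,t;\cdot,s')\,dg_{s'}\le C\exp(-\rho^2/(C(t-s')))$, bounds the numerator by $C(t-s)\exp(-d_s^2(x,y)/(C(t-s)))$. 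Dividing yields the stated bound; in the regime $d_s(x,y)\lesssim\sqrt{t-s}$ the resulting bound $C(t-s)^{-n/2}$ already has the claimed form after enlarging the constant in the exponent.

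For the gradient bound I switch to the forward side: $u:=K(\cdot,\cdot;y,s)$ solves $\partial_t u=\Delta_{g_t}u$, and under Ricci flow the Bochner term $-2\Ric(\nabla u,\nabla u)$ is cancelled by the contribution of $\partial_t g=-2\Ric$, so $|\nabla u|^2$ is a subsolution of the heat equation \emph{with no curvature term} --- which is why no bound on the full tensor $\Rm$ is needed. Testing the equation against $\phi^2 u$ with the space-time cutoff of Theorem \ref{Thm:cutoffnoLp} gives the energy estimate $\int_Q|\nabla u|^2\le C(t-s)^{-1}\int_{Q'}u^2$ on a backward cylinder $Q=B(x,t,\delta\sqrt{t-s})\times[t-\delta^2(t-s),t]$ and a slightly larger concentric $Q'$; combining this with the Moser mean-value inequality for heat subsolutions yields $|\nabla u|^2(x,t)\le C|Q|^{-1}\int_Q|\nabla u|^2\le C(t-s)^{-1}\sup_{Q'}u^2$. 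Since the already-proven upper bound (applied with the later time varying in $[t-\delta^2(t-s),t]$), the triangle inequality and distance distortion give $\sup_{Q'}u\le C(t-s)^{-n/2}\exp(-d_s^2(x,y)/(C(t-s)))$, the gradient bound follows. The lower bound is essentially \cite{Z11:1}; alternatively, conservation of mass together with the upper bound forces $\int_{B(x,s,C\sqrt{t-s})}K(x,t;\cdot,s)\,dg_s\ge\tfrac12$, whence the volume upper bound of \cite{Z11:1} produces a point near $x$ with $K\ge c(t-s)^{-n/2}$, and a local parabolic Harnack/chaining argument (again using distance distortion) upgrades this to the full Gaussian lower bound.

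The main obstacle is controlling the Gaussian exponent across time-slices: the Hein--Naber concentration estimate is phrased with respect to $d_{s'}$ for $s'>s$, the gradient estimate sees times $<t$, but the conclusion must be in terms of $d_s(x,y)$, and reconciling these is precisely the role of Theorem \ref{thdistcon}. Because the window $[s,t]$ can be as long as $AR_0^{-1}$, longer than a single distortion window $\sim\alpha R_0^{-1}$, one must chain the distance-distortion estimate over $\sim A$ subintervals, which degrades the comparison constant by an $A$-dependent factor and accounts for the $A$-dependence of $C_1,C_2$; verifying that this chaining is legitimate --- the intermediate distances stay above the scale $R_0^{-1/2}$ and the minimizing geodesics remain in a region where $R\le R_0$, both automatic here --- is the one place requiring genuine care. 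A secondary, routine point is tracking that all constants depend only on $\nu[g_0,2T],n,A$, which uses parabolic rescaling together with the hypothesis $s>A^{-1}(t-s)$ and the automatic lower bound $R_{\min}(s')\ge -n/(2s')$.
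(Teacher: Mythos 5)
Your proposal is correct and follows the paper's overall architecture---rescaling, distance distortion via Theorem~\ref{thdistcon}, the Hein--Naber integral estimate, and the mean-value inequality of Lemma~\ref{leCHEmv} built on the cutoff function of Theorem~\ref{Thm:cutoffnoLp}---but you handle the gradient bound by a genuinely different route. The paper simply applies the pointwise Li--Yau-type gradient estimate~(\ref{eq:Prel-derbound}) from~\cite{Z06:1} to $K(\cdot,\cdot;y,s)$: from $|\nabla_x K|^2/K^2 \le C(t-s)^{-1}\log(J/K)$ one gets $|\nabla_x K|^2\le C(t-s)^{-1}JK$, and substituting $J\le C(t-s)^{-n/2}$ together with the just-established Gaussian upper bound on $K$ finishes in two lines. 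Your route via an energy estimate followed by Moser iteration on the subsolution $|\nabla u|^2$ also works (it is essentially Lemma~\ref{lemoserHE} together with the argument of Proposition~\ref{prHEmv}), and it has the pedagogical virtue of isolating the Bochner cancellation, but it is noticeably heavier machinery for the same conclusion and forfeits the pointwise character of~(\ref{eq:Prel-derbound}). One logical point you should surface explicitly: the tail form of Hein--Naber concentration you invoke, $\int_{\{d_{s'}(x,\cdot)\ge\rho\}}K(x,t;\cdot,s')\,dg_{s'}\le Ce^{-\rho^2/C(t-s')}$, is not a free-standing input---it is deduced from their log-Sobolev inequality together with the knowledge that the conjugate heat-kernel measure carries a definite fraction of its mass in $B(x,s',C\sqrt{t-s'})$, which is precisely the content of the Gaussian lower bound. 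So the lower bound must be in hand before the upper bound, which is why the paper proves it in Step~1 and then feeds it into the normalization integral $\int_{B(x,l,\sqrt{t-l})}K\,dg_l$ appearing in the ball-average form~(\ref{eq:HNresult}). Your write-up defers the lower bound to the end, inverting the logical order; stating it first (it is essentially~\cite{Z11:1} plus Theorem~\ref{thdistcon}) repairs this.
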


Note that the lower bound for $s$ is necessary since we do not assume curvature or injectivity radius bounds on the initial metric.
On the other hand, if we assume such bounds on the initial metric, then one can derive upper and lower Gaussian bounds by standard methods, since $(M, g_t)$ will have bounded geometry at least for small times.
Using the reproducing formula and Theorem \ref{thkernelUP} we can derive Gaussian bounds up to any finite time.
Observe also that by Theorem \ref{thdistcon}, one can replace the distance $d_s (x, y)$ by $d_t (x, y)$ freely after adjusting the constant $C_2$ in the above statements.
This will be made clear during the proof of the theorem.
The proof of this theorem and further useful results, such as mean value inequalities for heat equations on Ricci flows, can be found in sections \ref{sec:meanvalue} and \ref{sec:heatkernelbound}.

Next, we prove the following backward pseudolocality theorem assuming a local scalar curvature bound.
Previously, similar backward pseudolocality properties have been established in two other settings:
First, Perelman (cf \cite[Proposition 6.4]{P:2}) obtained a similar result in the three dimensional case without assuming a scalar curvature bound. 
Second, X. X. Chen and B. Wang (cf \cite{CW:1}) proved a backward curvature bound under the additional assumption that the curvature tensor has uniformly bounded $L^{n/2}$ norm. 
Recently, the same authors presented a long-time backward pseudolocality property for K\"ahler Ricci flows on Fano manifolds (cf \cite{CW:3}).

\begin{theorem}[backward pseudolocality] \label{Thm:backwpseudoloc} \label{thbackcurv}
Let $(\M^n, (g_t)_{t \in [0,T)})$, $T < \infty$ be a Ricci flow on a compact manifold.
Then there are constants $\varepsilon > 0$, $K < \infty$, which only depend on $\nu [ g_0, 2T ], n$, such that the following holds:

Let $(x_0, t_0) \in \M \times (0, T)$ and $0 < r_0 \leq  \sqrt{t_0}$ and assume that
\begin{equation} \label{eq:pseudolocscalbound}
 R \leq r_0^{-2} \qquad \text{on} \qquad P(x_0, t_0, r_0, - 2 (\varepsilon r_0)^2) 
\end{equation}
and
\[ |{\Rm} (\cdot, t_0)| \leq r_0^{-2} \qquad \text{on} \qquad B(x_0,t_0,r_0). \]
Then
\[ |{\Rm}| < K r_0^{-2} \qquad \text{on} \qquad P(x_0,t_0,\varepsilon r_0, - (\varepsilon r_0)^2). \]
\end{theorem}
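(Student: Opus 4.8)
The plan is to argue by contradiction and rescaling, in the spirit of Perelman's pseudolocality proof, but using the tools developed earlier in the paper—particularly the distance distortion estimate (Theorem \ref{thdistcon}) and the heat kernel/cutoff machinery (Theorem \ref{Thm:cutoffnoLp})—to control the geometry backward in time. Suppose the statement fails. Then for fixed $n$ and fixed bounds on $\nu[g_0,2T]$ we obtain a sequence of Ricci flows $(\M_i,(g^i_t))$, points $(x_i,t_i)$, scales $r_i$, and parameters $\varepsilon_i\to 0$, $K_i\to\infty$ such that the scalar curvature hypothesis \eqref{eq:pseudolocscalbound} and the time-$t_i$ curvature bound $|\Rm(\cdot,t_i)|\le r_i^{-2}$ on $B(x_i,t_i,r_i)$ both hold, yet $|\Rm|$ exceeds $K_i r_i^{-2}$ somewhere in $P(x_i,t_i,\varepsilon_i r_i,-(\varepsilon_i r_i)^2)$. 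After parabolically rescaling so that $r_i=1$ and translating $t_i=0$, we have scalar curvature $R\le 1$ on a fixed parabolic backward neighborhood, curvature $|\Rm(\cdot,0)|\le 1$ on $B(x_i,0,1)$, but a point-reachable-from-$(x_i,0)$ backward-in-time blow-up of the full curvature tensor. A point-selection argument (Perelman-type, as in the standard pseudolocality proof) then lets us extract a sequence of points $(y_i,s_i)$ with $s_i\in[-(\varepsilon_i)^2,0]\to 0$, at which $|\Rm(y_i,s_i)|=Q_i\to\infty$, and such that $|\Rm|$ is comparably bounded (say by $2Q_i$) on a parabolic neighborhood of $(y_i,s_i)$ of size proportional to $Q_i^{-1/2}$ backward and a controlled amount forward, reaching up to time $0$.

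The next step is to extract a limit flow. Rescaling by $Q_i$ at $(y_i,s_i)$, we obtain pointed Ricci flows with $|\Rm|\le 2$ on increasingly large parabolic neighborhoods and $|\Rm|=1$ at the base point; the rescaled scalar curvature tends to $0$, so the limit has vanishing scalar curvature. To pass to a smooth Cheeger–Gromov–Hamilton limit we need a non-collapsing bound at the base points, which is supplied by Perelman's $\kappa$-noncollapsing via the lower bound on $\nu[g_0,2T]$ (the $\nu$-functional is monotone and its lower bound survives rescaling). The distance distortion estimate Theorem \ref{thdistcon} is what guarantees that the rescaled balls do not shrink or expand uncontrollably over the relevant (rescaled: very long) backward time-intervals, so that the limit is a complete ancient (or at least long-existing backward) Ricci flow. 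The limit flow $(\M_\infty,(g^\infty_t)_{t\le 0})$ then has $|\Rm|\le 2$, $R\equiv 0$, $|\Rm(y_\infty,0)|=1$, and is $\kappa$-noncollapsed.

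The contradiction comes from the structure of such a limit: a Ricci flow with $R\equiv 0$ is necessarily Ricci-flat at every time (trace the evolution equation $\partial_t R=\Delta R+2|\Ric|^2$, which forces $\Ric\equiv 0$), hence static. So the limit is a fixed complete Ricci-flat manifold with $|\Rm|\le 2$ everywhere and $|\Rm|=1$ at one point, $\kappa$-noncollapsed. To rule this out we bring in the time-$0$ information: the $(y_i,s_i)$ are chosen close to time $0$ and within bounded rescaled distance of the region where $|\Rm(\cdot,0)|\le 1$, so in the limit the time-$0$ slice has a ball on which $|\Rm|\le 0$ after rescaling—i.e. the limit is flat on a region—yet also carries a point with $|\Rm|=1$; combined with Ricci-flatness and the standard fact that a complete Ricci-flat manifold which is flat on an open set and $\kappa$-noncollapsed must be flat (e.g. by a strong unique continuation / connectedness argument for the curvature, or by the Anderson–Cheeger-type $\varepsilon$-regularity from volume ratios), this forces $\Rm\equiv 0$, contradicting $|\Rm(y_\infty,0)|=1$. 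The main obstacle I expect is the second step: ensuring enough a priori control—non-collapsing, a backward pseudolocality-type smoothing, and distance comparison—to extract a \emph{complete} smooth limit defined on a long backward time interval, and in particular propagating the curvature bound backward far enough that the point-selection neighborhoods used in the blow-up actually have definite parabolic size; this is precisely where Theorem \ref{thdistcon}, the cutoff function of Theorem \ref{Thm:cutoffnoLp}, and the heat-kernel bounds are needed to substitute for the Ricci or curvature bounds assumed in previous versions of such arguments.
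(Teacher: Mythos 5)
Your proposal takes a genuinely different route from the paper: you argue by contradiction and blow-up, whereas the paper uses a barrier argument combined with an induction on scales. The ingredient the paper leans on most heavily — and which is entirely absent from your sketch — is Lemma \ref{Lem:goodRicbound}, which upgrades Shi's estimate $|\partial_t \Rm| \lesssim r^{-4}$ to $|\partial_t \Rm| \lesssim r^{-3}$ under the additional scalar curvature bound. With this, the paper constructs the barrier $\phi$ from Lemma \ref{Lem:secondcutoff} (whose time derivative is bounded below by $r_0^{-1}$), compares $|\Rm|^{-1/2}$ to $\phi$, and shows that at a first contact point $(\overline{x},\overline{t})$ the inductive hypothesis at scale $\overline{r}=\tfrac12|\Rm(\overline{x},\overline{t})|^{-1/2}\le\tfrac12 r_0$ forces $|\partial_t |\Rm|^{-1/2}|\lesssim 1$, contradicting $\partial_t\phi>r_0^{-1}$ once $r_0$ is small enough. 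No limit is ever taken, which sidesteps precisely the difficulties your approach runs into.

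The gap in your argument is in the contradiction step, and you partially flag it yourself. First, the Perelman-type point selection gives you $|\Rm|\le 2Q_i$ on a \emph{backward} parabolic neighborhood of $(y_i,s_i)$ of size $\sim Q_i^{-1/2}$; it does not, by itself, give forward control ``reaching up to time $0$.'' The time-$0$ slice in the rescaled picture sits at rescaled time $Q_i|s_i|$, which you have no upper bound for, so it can escape to future infinity. Second — and this is the more essential problem — even if you could follow the flow forward, the ball $B(x_i,0,1)$ on which you know $|\Rm|\le 1$ sits at rescaled distance $\sim Q_i^{1/2}\,d(y_i,x_i)$ from your base point. Nothing forces $d(y_i,x_i)\,Q_i^{1/2}$ to stay bounded: the point of large curvature can live at a fixed positive distance from $x_i$, in which case the ``flat region'' is not visible in the pointed limit at all. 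Your limit is then only known to be a $\kappa$-noncollapsed, Ricci-flat, static manifold with $|\Rm|\le 2$ and $|\Rm(y_\infty)|=1$ — which is not a contradiction (Eguchi–Hanson in dimension $4$ is exactly such an object). The step ``limit is flat on an open set, hence flat by analyticity'' is correct as a piece of mathematics, but the hypothesis of that step is not established. This is the missing idea, and it is not clear how to supply it without either the entropy monotonicity (which runs the wrong way for a backward statement) or something playing the role of Lemma \ref{Lem:goodRicbound} as in the paper's argument.
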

The proof of this theorem can be found in section \ref{sec:pseudoloc}.
Note that it can be observed from this proof that the factor $2$ in (\ref{eq:pseudolocscalbound}) can be replaced by any number larger than $1$ if the constants $\varepsilon$, $K$ are adjusted suitably.

As an application, the backward pseudolocality theorem can be coupled with Perelman's forward pseudolocality theorem (cf \cite{P:1}) to deduce a stronger $\eps$-regularity theorem for Ricci flows. 

\begin{corollary}[strong $\eps$-regularity] \label{costrongreg}
Let $(\M^n, (g_t)_{t \in [0,T)})$ be a Ricci flow on a compact $n$-manifold.
Then there are constants $\delta, \varepsilon > 0$ and $K < \infty$, where $\delta$ only depends on $n, \varepsilon$ and $K$ only depends on $\nu[g_0, 2T], n$, such that the following holds:

Let $(x_0, t_0) \in \M \times [0, T)$ and $0 < r_0 \leq \min \{  \sqrt{t_0}, \sqrt{T-t_0} \}$ and assume that
\[ R \leq r_0^{-2} \qquad \text{on} \qquad B(x_0, t_0, r_0) \times [t_0 - 2(\varepsilon r_0)^2, t_0 + (\varepsilon r_0)^2]. \]
and
\[ |\p \Omega|_{t_0}^n \ge (1-\delta) c_n |\Omega|_{t_0}^{n-1} \qquad \text{for any} \qquad\Omega \subset  B(x_0,t_0,r_0), \]
where $c_n$ is the Euclidean isoperimetric constant.
Then
\[ |{\Rm}| < K r_0^{-2} \qquad \text{on} \qquad
 B(x_0,t_0, \varepsilon r_0 ) \times [ t_0-  (\varepsilon r_0)^2, t_0 + (\varepsilon r_0)^2].  \]
\end{corollary}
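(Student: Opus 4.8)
\emph{Proof proposal.}
The plan is to couple Perelman's forward pseudolocality theorem \cite{P:1} with the backward pseudolocality theorem (Theorem~\ref{thbackcurv}). Recall that Perelman's theorem, in the version whose hypothesis is the almost-Euclidean isoperimetric inequality on a ball $B(x,t,r)$, also demands a lower scalar curvature bound $R\geq -r^{-2}$ there. Here this is automatic: along any Ricci flow one has $R(\cdot,t)\geq -\tfrac{n}{2t}$, so $R(\cdot,t)\geq -\tfrac{n}{2}r_0^{-2}$ for $t$ near $t_0$ because $r_0\leq\sqrt{t_0}$, and after replacing $r_0$ by $r_0/\sqrt n$ this is exactly Perelman's normalization. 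Thus the isoperimetric assumption of the corollary is precisely the input for forward pseudolocality, and we set $\delta$ equal to Perelman's threshold, which depends on $n$ and, through the time scale $(\varepsilon r_0)^2$ on which we operate, on $\varepsilon$ — accounting for the claimed dependence of $\delta$.

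The subtlety is that forward pseudolocality applied at $t_0$ controls $|{\Rm}|$ only for $t>t_0$, by a quantity of size $(t-t_0)^{-1}+(\varepsilon r_0)^{-2}$ which blows up as $t\searrow t_0$; and one checks that no iteration of Theorem~\ref{thbackcurv} starting from a time $t_1>t_0$ can reach down to $t_0$, since each backward step recovers only an $O(\varepsilon^2)$-fraction of the elapsed time, so the union of the reached intervals stays strictly above $t_0$. For this reason I would instead apply forward pseudolocality at the \emph{earliest} time at which the scalar curvature is controlled, $t_-:=t_0-2(\varepsilon r_0)^2$ (which is positive, and $\geq t_0/2$, for $\varepsilon$ small). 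Its conclusion window has the form $(t_-,t_-+(c_n\varepsilon_P r_0)^2]$, and choosing $\varepsilon$ small enough that $3(\varepsilon r_0)^2\leq(c_n\varepsilon_P r_0)^2$ makes this window contain $[t_0-(\varepsilon r_0)^2,t_0+(\varepsilon r_0)^2]$; on that sub-interval one stays a definite fraction of $(\varepsilon r_0)^2$ away from $t_-$, so the bound is uniformly $|{\Rm}|\leq Kr_0^{-2}$ with $K=K(n)$ except for the contribution of the step below. This is exactly the asserted conclusion, so the whole corollary reduces to verifying the hypothesis of forward pseudolocality at time $t_-$.

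The remaining — and main — task is therefore to transfer the almost-Euclidean isoperimetric inequality from the slice $t_0$ to the slice $t_-$: one needs it on a ball $B(x_0,t_-,c r_0)$, with a constant $\delta'$ slightly worse than $\delta$ but still below Perelman's threshold, knowing only that it holds at $t_0$ on $B(x_0,t_0,r_0)$ and that $R\leq r_0^{-2}$ on $B(x_0,t_0,r_0)\times[t_-,t_0]$. Theorem~\ref{thdistcon}, which needs only this scalar bound, identifies $B(x_0,t_-,c r_0)$ with a subset of $B(x_0,t_0,r_0)$ up to a controlled factor, and the identity $\partial_t\, dV_{g_t}=-R\, dV_{g_t}$ keeps the volume $|\Omega|_t$ of any fixed $\Omega$ essentially constant along $[t_-,t_0]$. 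The genuine difficulty is the perimeter: it evolves by $\partial_t\, d\sigma=-(R-\Ric(\nu,\nu))\, d\sigma$, which involves the \emph{Ricci} and not merely the scalar curvature, and a thin region whose perimeter is collapsing going backward in time could destroy the inequality at $t_-$ while leaving it intact at $t_0$. Ruling this out is the heart of the proof, and it is precisely here that quantitative curvature control of pseudolocality type — in particular the backward pseudolocality theorem, as advertised in the introduction — must be brought in to replace the unavailable Ricci bound. I expect this transfer of the isoperimetric inequality to be by far the most delicate point; everything else is bookkeeping with the pseudolocality constants and with the distance- and volume-distortion estimates.
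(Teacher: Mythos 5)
You correctly identify the central obstruction to the naive combination of Perelman's forward pseudolocality (applied at $t_0$) with the backward pseudolocality Theorem~\ref{thbackcurv}: for $t\in(t_0,t_0+\varepsilon^2 r_0^2]$ the forward estimate gives $|{\Rm}|\sim (t-t_0)^{-1}$, so the natural scale at which to invoke Theorem~\ref{thbackcurv} is $(t-t_0)^{1/2}$, yielding a backward time reach of only $\varepsilon^2(t-t_0)$; summing such geometric steps never crosses $t_0$. This observation is genuine. But you then conclude that the forward/backward combination is a dead end and propose a completely different strategy whose crucial step you explicitly leave unresolved, and that strategy is in fact circular: to transfer the almost-Euclidean isoperimetric inequality from the $t_0$-slice to the $t_-$-slice you need control on the evolution of perimeters, which (as you yourself note) involves $\Ric$, not just $R$; and Theorem~\ref{thbackcurv}, which you invoke to supply that Ricci control, requires as a hypothesis that $|{\Rm}(\cdot,t_0)|\le r^{-2}$ on a ball at time $t_0$ — precisely the curvature bound the corollary is trying to establish. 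So the isoperimetric transfer is not bookkeeping but an unsolved sub-problem, and there is no visible way to break the circle with the tools developed in the paper.

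The paper's actual proof does stay with forward pseudolocality at $t_0$ plus backward pseudolocality, and exits your ``geometric trap'' by a bootstrap that \emph{changes the scale} at which Theorem~\ref{thbackcurv} is applied. After one application of Theorem~\ref{thbackcurv} at scale $\sim(t-t_0)^{1/2}$ (where you stopped), it invokes Lemma~\ref{Lem:goodRicbound} — which exploits the scalar-curvature bound in an essential way — to upgrade the local estimates to $|{\Ric}|(x,t)\lesssim(t-t_0)^{-1/2}$ and $|\partial_t{\Rm}|(x,t)\lesssim(t-t_0)^{-3/2}$ near $x_0$. Because the exponent $3/2<2$, the $\partial_t{\Rm}$ bound is integrable as $t\searrow t_0$, and integrating it from $t_0+\eps_*^2$ down to $t$ improves the Riemann bound from $(t-t_0)^{-1}$ to $(t-t_0)^{-1/2}$. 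At this improved bound the curvature scale is $\sim(t-t_0)^{1/4}$, and a \emph{single} application of Theorem~\ref{thbackcurv} at that scale has backward time reach $\sim(t-t_0)^{1/2}$, which dominates $(t-t_0)$ for $t$ close to $t_0$ and therefore jumps below $t_0$ in one step. This is the idea you were missing: the later-time curvature bound is sharpened by means other than iterating pseudolocality, and the sharpening changes the scale enough that the iteration becomes unnecessary.
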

The proof of this corollary can be found in section \ref{sec:pseudoloc}.

Note that in Perelman's forward pseudolocality theorem, the bound on the curvature tensor is $K/(s-t)$ for $s \in (t, t+ \varepsilon r^2]$, which blows up at time $t$. 
In contrast, the curvature bound in the corollary above is indepent of time and extends in both directions in time. 
As an application, Corollary \ref{costrongreg} combined with Shi's curvature derivative bound \cite{Sh:1}, seems to simplify and fill in some details in section 3.3 of the paper \cite{TZz:1} by G. Tian and Z.~L. Zhang. 
For example, the curvature bound (3.45) there now holds in a fixed open set relative
to the time level $0$ instead of the variable time $t_j$.
Note that if the $C^\a$ harmonic radius at a point $x$ and time $t$ is $r$, then the isoperimetric condition holds in $B(x, t, \theta r)$ for a fixed $\theta \in (0, 1)$.
Thus the corollary implies that the curvature is bounded in $B(x, t, \eps \theta r)$. 

As further application of the backward pseudolocality Theorem, we derive an $L^2$-bound for the Riemannian curvature in dimension 4, assuming a uniform bound on the scalar curvature.
The precise statement of the result makes use of the following notion:

\begin{definition} \label{Def:rho}
For a Riemannian manifold $(\M, g)$ and a point $x \in \M$ we define
\[ \rrm (x) := \sup \big\{ r > 0 \;\; : \;\; |{\Rm}| < r^{-2} \quad \text{on} \quad B(x, r) \big\}. \]
If $(\M, g)$ is flat, then we set $\rrm (x) = \infty$.
If $(\M, (g_t)_{t \in [0,T)})$ is a Ricci flow and $(x,t) \in \M \times [0,T)$, then $\rrm (x,t)$ is defined to be the radius $\rrm (x)$ on the Riemannian manifold $(\M , g_t)$.
\end{definition}
 
The result is now the following.

\begin{theorem}[$L^2$ curvature bound in dimension 4] \label{Thm:L2bound}
Let $(\M^4, (g_t)_{t \in [0,T)})$ be a Ricci flow on a compact $4$-manifold that satisfies $R  \leq R_0 < \infty$ everywhere.
Then there are constants $A, B < \infty$, which only depend on the product $R_0 T$ and on $\nu[g_0, 2T]$, such that the following holds:

Let $\chi (M)$ be the Euler characteristic of $\M$.
Then for all $t \in [T/2, T)$ the following bounds hold:
\[ \Vert {\Rm} (\cdot, t) \Vert_{L^2 (M, g_t)} = \bigg( \int_{\M} |{\Rm}(\cdot, t) |^2 dg_t \bigg)^{1/2}   \leq A \chi(M) + B \vol_0 M. \]
For any $p \in (0, 4)$
\[ \int_{\M} | {\Ric} |^p(x, t) dg_t + \int_{\M} \rrm^{-p} (x, t)  d g_t \leq A \chi ( M  )+ \frac{B}{4-p} \vol_0 M. \]
Finally, for all $0 < s \leq 1$
\[ \frac{| \{ |{\Ric}| (\cdot, t) \geq s^{-1} \} |_t }{s^4} + \frac{| \{ \rrm (\cdot, t) \leq s \} |_t }{s^4}  \leq A \chi ( M ) + B \vol_0 M . \]
Here we use the short form $\{ f \geq a \} := \{ x \in \M \; : \; f(x) \geq a \}$.
\end{theorem}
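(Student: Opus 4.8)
The plan is to reduce the $L^2$ bound on $\Rm$ to a fixed-time bound on $\int_\M|\Ric|^2(\cdot,t)\,dg_t$ via the Chern--Gauss--Bonnet formula, and then to obtain this bound --- together with the weak-type and $L^p$ bounds --- from a space--time $L^2$ bound on the Ricci tensor, the backward pseudolocality Theorem \ref{thbackcurv}, and Perelman's $\nu$-entropy (which supplies a uniform Sobolev inequality and $\kappa$-noncollapsing). One may rescale so that $T=1$.

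\emph{A priori bounds.} The maximum principle applied to $\partial_t R\ge\Delta R+\tfrac2nR^2$ gives $R\ge-\tfrac n{2t}$, hence $|R|\le R_1$ on $[T/4,T)$; standard volume estimates for Ricci flows with bounded scalar curvature give $\vol_t\M\lesssim\vol_0\M$ there. Now $\tfrac d{dt}\int_\M R\,dg_t=\int_\M(2|\Ric|^2-R^2)\,dg_t=-\tfrac{d^2}{dt^2}\vol_t\M$, and $\tfrac{d^2}{dt^2}\vol_t\M=\int_\M(R^2-2|\Ric|^2)\,dg_t\le\tfrac12\int_\M R^2\,dg_t$ is bounded above while $\vol_t\M$ is bounded; since a bounded $C^1$ function on a finite interval with second derivative bounded above has bounded first derivative, $\int_\M R\,dg_t$ is bounded on $[T/4,T)$, and integrating the displayed identity yields
\[
\int_{T/4}^T\!\!\int_\M|\Ric|^2\,dg_t\,dt\le C_0 .
\]

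\emph{The Chern--Gauss--Bonnet reduction.} In dimension four one has, for each fixed $t$,
\[
\int_\M|\Rm|^2(\cdot,t)\,dg_t=8\pi^2\chi(\M)+\tfrac32\int_\M|\Ric|^2(\cdot,t)\,dg_t-\tfrac14\int_\M R^2(\cdot,t)\,dg_t ,
\]
and $\int_\M R^2(\cdot,t)\,dg_t\le R_1^2\vol_t\M\lesssim\vol_0\M$. So the $L^2$ bound on $\Rm$ follows once one knows $\int_\M|\Ric|^2(\cdot,t)\,dg_t\lesssim\chi(\M)+\vol_0\M$ for $t\in[T/2,T)$. It is important that this cannot be deduced by integrating $\rrm^{-4}$, which may diverge even when the sharp distribution bound $|\{\rrm(\cdot,t)<s\}|_t\le Cs^4$ holds; it has to come from the space--time bound $C_0$, upgraded to a single time slice.

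\emph{From space--time to a fixed slice (the core).} For $x$ with $\rrm(x,t)=\rho$ small, the hypotheses of Theorem \ref{thbackcurv} hold on $B(x,t,\rho)$ (there $|\Rm|<\rho^{-2}$, and $R\le R_0\le\rho^{-2}$), so $|\Rm|<K\rho^{-2}$ on the \emph{backward} parabolic neighborhood $P(x,t,\varepsilon\rho,-(\varepsilon\rho)^2)$ --- a region of controlled geometry reaching into the time interval on which $C_0$ lives. There $|\Ric|^2$ is a subsolution of a heat-type equation with coefficients controlled by $K\rho^{-2}$, so a parabolic mean-value inequality (together with Theorem \ref{thdistcon} to compare balls at nearby times and $\kappa$-noncollapsing for volume lower bounds) bounds $|\Ric|^2(x,t)$ in terms of the local space--time Ricci energy; running this through a Vitali covering of $\{\rrm(\cdot,t)<s\}$, and invoking the $\varepsilon$-regularity Corollary \ref{costrongreg} and the local Gauss--Bonnet content of the covering balls to bring in $\chi(\M)$, gives both $\int_\M|\Ric|^2(\cdot,t)\,dg_t\lesssim\chi(\M)+\vol_0\M$ and $|\{\rrm(\cdot,t)<s\}|_t\le Cs^4$. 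The remaining assertions then follow formally: $|\Ric|\le c_n|\Rm|\le c_n\rrm^{-2}$ converts the distribution bound for $\rrm$ into one for $|\Ric|$, and the $L^p$ bounds for $p<4$ drop out of the layer-cake formula applied to $|\{\rrm(\cdot,t)<s\}|_t\le\min\{Cs^4,\vol_t\M\}$, the factor $\tfrac1{4-p}$ coming from the borderline divergence of $\int_0^1 s^{3-p}\,ds$.

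\emph{Main obstacle.} The entire difficulty sits in the previous paragraph: only \emph{space--time} integral information is available, yet a \emph{sharp fixed-time} estimate with the exact exponent $4$ is required, and one must control the full curvature tensor although the Ricci energy alone is blind to the Weyl part. Backward pseudolocality is precisely the device that permits the descent from a single time slice; combining it efficiently with $\varepsilon$-regularity, $\kappa$-noncollapsing and a multiscale covering --- without losing powers of $s$ --- is the crux.
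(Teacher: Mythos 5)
Your proposal takes a genuinely different route from the paper, and it has real gaps.

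\textbf{What the paper actually does.}
The paper's proof is a purely fixed-time bootstrap that never invokes a space--time integral of $|\Ric|^2$. The skeleton is:
(i) Lemma~\ref{Lem:picay} (point-picking) produces, for every $x$ with $\rrm(x,t)<\overline{r}$, a nearby $y$ with $\rrm(y,t)\le\rrm(x,t)$ and $\int_{B(y,t,\frac16\rrm(y,t))}|\Rm|^2>\delta$;
(ii) a Vitali covering by such balls gives $N$ pairwise disjoint balls covering all of $\{\rrm<\overline{r}\}$, and the decomposition into ``basins'' $D_i$ gives $\rrm(x,t)>\frac16 d_t(x,y_i)$, hence $|\{\rrm\le s\}|_t\le C N s^4$;
(iii) disjointness gives $\int|\Rm|^2>N\delta$;
(iv) the layer-cake formula and the \emph{improved} bound $|\Ric|\le C'\rrm^{-1}$ (from Lemma~\ref{Lem:goodRicbound} via backward pseudolocality, not the naive $|\Ric|\le c\rrm^{-2}$) give $\int|\Ric|^2\le C\overline{r}^{-2}\vol_t\M+CN\overline{r}^2$;
(v) Chern--Gauss--Bonnet then yields $N\delta<32\pi^2\chi+C\overline{r}^{-2}\vol_t\M+CN\overline{r}^2$, and taking $\overline{r}$ small absorbs the last term, giving $N\lesssim\chi+\vol_0\M$. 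All three statements of the theorem then follow from the bound on $N$.

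\textbf{Gap 1: the wrong exponent in the pointwise Ricci bound.}
You use $|\Ric|\le c_n|\Rm|\le c_n\rrm^{-2}$, which is the naive bound. But the $L^p$ estimate for $p$ up to $4$ and the weak-type-$4$ bound on $|\Ric|$ require $|\Ric|\lesssim\rrm^{-1}$: with $\rrm^{-2}$ the layer-cake argument applied to $|\{\rrm\le s\}|_t\lesssim s^4$ only gives $\int|\Ric|^p<\infty$ for $p<2$ and $|\{|\Ric|\ge s^{-1}\}|_t\lesssim s^2$, both strictly weaker than the theorem. The improved power $-1$ (rather than $-2$) is a genuine gain from the scalar curvature bound (Lemma~\ref{Lem:goodRicbound}) combined with backward pseudolocality, and it is indispensable; your proposal does not use it.

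\textbf{Gap 2: the ``core'' step does not close.}
The claim that ``a parabolic mean-value inequality bounds $|\Ric|^2(x,t)$ in terms of the local space--time Ricci energy, and running this through a Vitali covering gives $\int|\Ric|^2(\cdot,t)\lesssim\chi+\vol_0\M$ and $|\{\rrm\le s\}|_t\le Cs^4$'' is where the proof is supposed to live, but the details do not come out. Backward pseudolocality gives a parabolic neighborhood $P(x,t,\varepsilon\rho,-(\varepsilon\rho)^2)$ of volume $\sim\rho^6$, so the mean-value inequality has a prefactor $\rho^{-6}$ where $\rho=\rrm(x,t)$. Integrating over the time-$t$ slice produces $\int_\M\rrm^{-6}(x,t)\int_{P(x,\dots)}|\Ric|^2$, which does not Fubini into the global space--time Ricci energy: the weight $\rrm^{-6}$ is not locally integrable in dimension $4$ (note $\rrm(x,t)\gtrsim d_t(x,y_i)$ near a concentration point). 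You also do not supply a substitute for Lemma~\ref{Lem:picay}, which is what makes the Vitali covering into a \emph{counting} argument bounding $N$; Corollary~\ref{costrongreg} ($\varepsilon$-regularity via isoperimetric closeness) does not serve that purpose, and ``local Gauss--Bonnet content of the covering balls'' is not a statement that can be made precise (local Gauss--Bonnet in dimension $4$ carries boundary terms you do not control). In the paper the Euler characteristic enters only through the \emph{global} Chern--Gauss--Bonnet identity, combined with (iii) and (iv) above.

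\textbf{A minor point.} Your a~priori bound $\int_{T/4}^T\int_\M|\Ric|^2\le C_0$ is true, and more directly obtained by integrating $\tfrac{d}{dt}\int R\,dg_t=\int(2|\Ric|^2-R^2)\,dg_t$ and using the two-sided bound on $\int R\,dg_t$ coming from $-\tfrac n{2t}\le R\le R_0$ and the volume bound; the concavity argument you give is not needed and, as stated (``bounded with second derivative bounded above implies bounded first derivative''), is not correct near the right endpoint. More importantly this space--time bound plays no role in the paper's proof.
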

The proof of this theorem can be found in section \ref{sec:L2bound}.
Note that the first bound of Theorem \ref{Thm:L2bound} has been obtained independently in \cite{Simon:1}.

Since the $L^2$-bound of $\Rm$ is scaling invariant and descends to geometric limits, we immediately obtain that singularity models of $4$-dimensional Ricci flows with bounded scalar curvature have $L^2$-bounded curvature as well:

\begin{corollary} \label{Cor:blowupALE}
Let $(\M^4, (g_t)_{t \in [0,T)})$, $T < \infty$ be a Ricci flow on a compact $4$-manifold that satisfies $R  \leq R_0 < \infty$ everywhere.
Let $(x_k, t_k) \in \M \times [0,T)$ be a sequence with $Q_k = |{\Rm}| (x_k, t_k) \to \infty$ and assume that the pointed sequence of blow-ups $(\M, Q_k g_{(Q_k^{-1} t + t_k)}, x_k)$ converges to some ancient Ricci flow $(\M_\infty, (g_{\infty,t})_{t \in (-\infty, 0]}, x_\infty)$ in the smooth Cheeger-Gromov sense.
Then $g_{\infty, t} = g_\infty$ is constant in time and $(\M_\infty, g_\infty )$ is Ricci-flat and asymptotically locally Euclidean (ALE).
\end{corollary}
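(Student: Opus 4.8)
The plan is to extract from the hypotheses three structural features of the limit flow $(\M_\infty,(g_{\infty,t}),x_\infty)$ --- namely that it is (i) static and Ricci-flat, (ii) of finite $L^2$-curvature, and (iii) $\kappa$-noncollapsed on all scales --- and then to feed (i)--(iii) into the structure theory for complete Ricci-flat $4$-manifolds (Bando--Kasue--Nakajima, Anderson, Tian, Cheeger--Tian), which yields the ALE conclusion.

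For (i), set $g^{(k)}_t := Q_k g_{Q_k^{-1}t+t_k}$, so that $R_{g^{(k)}}(\cdot,t)=Q_k^{-1}R_{g}(\cdot,Q_k^{-1}t+t_k)$. Since $\M$ is compact and $T<\infty$, the curvature is bounded on $\M\times[0,T-\delta]$ for each $\delta>0$, so $Q_k\to\infty$ forces $t_k\to T$ and the rescaled times $Q_k^{-1}t+t_k$ lie in $[0,T)$ for large $k$. The maximum principle applied to $\partial_t R=\Delta R+2|\Ric|^2\ge\Delta R$ on $\M$ gives $R_g\ge\min_\M R_{g_0}$, while $R_g\le R_0$ by hypothesis; hence $Q_k^{-1}\min_\M R_{g_0}\le R_{g^{(k)}}\le Q_k^{-1}R_0$ on the relevant time interval, and both sides tend to $0$. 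Because scalar curvature depends continuously on the $2$-jet of the metric, smooth Cheeger--Gromov convergence yields $R_{g_{\infty,t}}\equiv 0$ for all $t\le 0$; plugging this into the evolution equation gives $0=\partial_t R_{g_{\infty,t}}=2|\Ric_{g_{\infty,t}}|^2$, so $\Ric_{g_{\infty,t}}\equiv 0$ and therefore $\partial_t g_{\infty,t}=-2\Ric_{g_{\infty,t}}=0$. Thus $g_{\infty,t}\equiv g_\infty$ is static and $(\M_\infty,g_\infty)$ is Ricci-flat; it is non-flat because the normalization $|\Rm_{g^{(k)}}|(x_k,0)=Q_k^{-1}|\Rm_g|(x_k,t_k)=1$ passes to the limit and gives $|\Rm_{g_\infty}|(x_\infty)=1$.

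For (ii), recall that $\int_\M|\Rm|^2\,dg$ is scale invariant in dimension $4$, so $\int_\M|\Rm_{g^{(k)}}(\cdot,0)|^2\,dg^{(k)}_0=\int_\M|\Rm_{g}(\cdot,t_k)|^2\,dg_{t_k}$, which for $k$ large (so that $t_k\ge T/2$) is bounded by $(A\chi(\M)+B\vol_0\M)^2$ independently of $k$ by Theorem \ref{Thm:L2bound}. Passing this bound to the limit on relatively compact pieces of the Cheeger--Gromov diffeomorphisms and exhausting $\M_\infty$ gives $\int_{\M_\infty}|\Rm_{g_\infty}|^2\,dg_\infty<\infty$. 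For (iii), the bound $\nu[g_0,2T]>-\infty$ furnishes, via Perelman's no-local-collapsing theorem, a constant $\kappa=\kappa(\nu[g_0,2T],n)>0$ such that the original flow is $\kappa$-noncollapsed below scale $\sqrt{2T}$; rescaling by $Q_k\to\infty$ sends this scale to infinity, so the static limit $(\M_\infty,g_\infty)$ is $\kappa$-noncollapsed at all scales, i.e.\ $|B(x,r)|_{g_\infty}\ge\kappa r^n$ whenever $|\Rm_{g_\infty}|\le r^{-2}$ on $B(x,r)$.

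Finally, the structure theory for complete Ricci-flat $4$-manifolds with finite $L^2$-curvature shows that $(\M_\infty,g_\infty)$ has finitely many ends, each asymptotic to a flat model; the collapsed models (ALF, ALG, ALH), whose volume growth is of order $r^{3}$, $r^{2}$, $r$, are incompatible with $\kappa$-noncollapsing at all scales, so every end is ALE of the form $\IR^4/\Gamma$, i.e.\ $(\M_\infty,g_\infty)$ is ALE. The step I expect to be the main obstacle is making this last point precise: one must upgrade ``$\kappa$-noncollapsed $+$ $\int|\Rm|^2<\infty$'' to genuine Euclidean volume growth, $|B(p,r)|_{g_\infty}\ge c_0 r^4$ for all $r$, and since the noncollapsing statement carries a curvature hypothesis on the ball, this requires an $\eps$-regularity bootstrap that propagates the noncollapsing from small scales (where boundedness of the curvature makes it automatic) outward along each end, using the $L^2$-smallness at infinity at each stage --- precisely the mechanism underlying the cited results. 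The remaining steps are routine applications of scaling, the maximum principle, and Theorem \ref{Thm:L2bound}.
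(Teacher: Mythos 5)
Your proposal is essentially correct and follows the route the paper indicates. The paper proves the corollary only implicitly: it observes that the $L^2$-bound of Theorem \ref{Thm:L2bound} is scale-invariant and passes to the limit, and then leaves the ALE conclusion to the standard structure theory for complete Ricci-flat $4$-manifolds of finite $L^2$-curvature (the Bando--Kasue--Nakajima / Anderson theorem), noting that Cheeger--Naber also applies. You supply the same three ingredients---staticness and Ricci-flatness, finite $L^2$-curvature, $\kappa$-noncollapsing at all scales---and feed them into the same structure theory.

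Two remarks. First, for the Ricci-flatness you argue directly from the evolution equation: $R_{g^{(k)}}\to 0$ forces $R_{g_\infty}\equiv 0$, and then $0=\partial_t R=\Delta R+2|\Ric|^2$ yields $\Ric_{g_\infty}\equiv 0$. This is slightly different from the paper's own device (in Lemma \ref{Lem:Xsmoothonreg} the paper deduces $\Ric_{g_\infty}\equiv 0$ from the improved pointwise Ricci estimate of Lemma \ref{Lem:goodRicbound}, which is routed through the backward pseudolocality theorem). Your more elementary argument is valid here because the statement of the corollary already hypothesizes smooth Cheeger--Gromov convergence of the flow; the paper's more elaborate device is needed in the Gromov--Hausdorff setting of section \ref{sec:L2bound}, where one does not a priori have smooth spacetime convergence near the singular set. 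Second, in the final step your opening framing---``the structure theory for complete Ricci-flat $4$-manifolds with finite $L^2$-curvature shows $\ldots$ finitely many ends asymptotic to flat models, and then the collapsed ones are ruled out by noncollapsing''---is not quite the right logical order: Bando--Kasue--Nakajima takes Euclidean volume growth as a \emph{hypothesis}, it does not first classify ends and then discard the collapsed ones. But you immediately self-correct and identify the genuine content, namely the $\eps$-regularity bootstrap that upgrades Perelman's curvature-conditional $\kappa$-noncollapsing to unconditional Euclidean volume growth $|B(p,r)|\ge c_0r^4$ for all $r$; that is precisely the mechanism the cited references rely on, and once it is in hand the ALE conclusion follows.
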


Corollary \ref{Cor:blowupALE} also follows from recent work of Cheeger and Naber (cf \cite{Cheeger-Naber-codim-4}).
A direct consequence of Corollary \ref{Cor:blowupALE} is (see \cite[Corollary 5.8]{Anderson}):

\begin{corollary}
Let $(\M^4, (g_t)_{t \in [0,T)})$, $T < \infty$ be a Ricci flow on a compact $4$-manifold $\M$ that satisfies the following topological condition: the second homo\-logy group over every field vanishes, i.e. $H_2 (\M; \mathbb{F}) = 0$ for every field $\mathbb{F}$ (for example, the $4$-sphere satisfies this condition).

Then the scalar curvature becomes unbounded as $t \nearrow T$.
\end{corollary}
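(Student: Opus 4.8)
\emph{Proof proposal.} The plan is to argue by contradiction. Suppose the conclusion fails, so that the scalar curvature stays bounded, $R \leq R_0 < \infty$ on $\M \times [0,T)$. Since $\M$ is compact and $T < \infty$, the flow cannot be extended past $T$, so by Hamilton's result (\cite{Ha:0}) we have $\sup_{\M} |{\Rm}(\cdot, t)| \to \infty$ as $t \nearrow T$. Hence we may choose a sequence $(x_k, t_k) \in \M \times [0,T)$ with $t_k \nearrow T$ and $Q_k := |{\Rm}|(x_k, t_k) \to \infty$, and after a standard point-selection argument (Hamilton/Perelman) we may further arrange that, after rescaling by $Q_k$, the curvature remains uniformly bounded on parabolic neighborhoods of $(x_k,t_k)$ whose size tends to infinity.

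Next I would extract a blow-up limit. Since $g_0$ is a smooth metric on a compact manifold we have $\nu[g_0,2T] > -\infty$, so Perelman's no-local-collapsing theorem provides a uniform lower bound on the injectivity radius at the points $(x_k,t_k)$ after rescaling by $Q_k$. Together with the curvature bounds from the point selection, Hamilton's compactness theorem yields a subsequence along which the pointed rescaled flows $(\M, Q_k g_{Q_k^{-1} t + t_k}, x_k)$ converge in the pointed smooth Cheeger--Gromov sense to a complete ancient Ricci flow $(\M_\infty, (g_{\infty,t})_{t \in (-\infty,0]}, x_\infty)$; the time interval exhausts $(-\infty,0]$ because $Q_k t_k \to \infty$. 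By construction $|{\Rm}_{g_{\infty,0}}|(x_\infty) = 1$, so the limit is non-flat. Now Corollary \ref{Cor:blowupALE} applies and shows that $g_{\infty,t} = g_\infty$ is static, Ricci-flat, and asymptotically locally Euclidean.

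Finally I would invoke the topology of non-flat Ricci-flat ALE $4$-manifolds. Such a manifold $\M_\infty$ has $b_1 = 0$ and, since it is asymptotic to $\IR^4/\Gamma$ at infinity, Gauss--Bonnet forces $\chi(\M_\infty) > 1$; with $b_0 = 1$ and $b_3 = b_4 = 0$ this gives $b_2(\M_\infty) \geq 1$, and in fact the intersection form on $H_2$ is non-degenerate (negative definite). Choose a compact domain $\Omega \subset \M_\infty$ carrying a $2$-cycle whose class in $H_2(\M_\infty;\mathbb{F})$ is nonzero for a suitable field $\mathbb{F}$. By the smooth convergence, for all large $k$ there is an embedding of $\Omega$ into a small region of $\M$ near $x_k$; by the argument of \cite[Corollary 5.8]{Anderson} — which uses precisely the non-degeneracy of the bubble's intersection form together with the convergence — the image of this cycle represents a nonzero class in $H_2(\M;\mathbb{F})$. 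This contradicts the hypothesis $H_2(\M;\mathbb{F}) = 0$, and the proof is complete. (Alternatively, one could replace the blow-up argument by Corollary \ref{Cor:convergenceorbifold}: under the assumed scalar bound $g_t$ converges to an orbifold with cone singularities, and a neighborhood of an orbifold point again bubbles off a non-flat Ricci-flat ALE space, leading to the same contradiction.)

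I expect the main obstacle to be the last step: the topological persistence of the bubble's second homology into the closed manifold $\M$. I would handle this by citing \cite[Corollary 5.8]{Anderson} (together with \cite{Cheeger-Naber-codim-4} and the structure theory of Ricci-flat ALE $4$-manifolds) rather than reproving it; if a self-contained treatment were needed, the key point is to combine a Mayer--Vietoris argument with the non-degeneracy of the intersection form on the bubble and the smoothness of the Cheeger--Gromov convergence. The point-selection and no-local-collapsing inputs, by contrast, are routine.
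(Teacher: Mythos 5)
Your proof is correct and follows essentially the same route as the paper, which gives no independent argument but simply observes that this corollary is a direct consequence of Corollary \ref{Cor:blowupALE} together with \cite[Corollary 5.8]{Anderson}. Your detailed unpacking of the blow-up construction and the topological persistence of the bubble's second homology is a faithful elaboration of that citation; the only minor imprecision is the claim that Gauss--Bonnet alone forces $\chi(\M_\infty) > 1$ --- the ALE Chern--Gauss--Bonnet identity only yields $\chi(\M_\infty) > 1/|\Gamma|$, and obtaining $b_2(\M_\infty) \geq 1$ (equivalently ruling out $\chi = 1$) for a non-flat Ricci-flat ALE $4$-manifold requires a short additional argument, which is in any case subsumed in the cited result of Anderson.
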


The $L^2$-bound on the Riemannian curvature can also be used to understand the formation of the ALE-singularities on a global scale:

\begin{corollary} \label{Cor:convergenceorbifold}
Let $(\M^4, (g_t)_{t \in [0,T)})$, $T < \infty$ be a Ricci flow on a compact $4$-manifold that satisfies $R \leq R_0 < \infty$ everywhere.
Then $(\M, g_t)$ converges to an orbifold in the smooth Cheeger-Gromov sense.

More specifically, we can find a decomposition $\M = \M^{\textnormal{reg}} \dotcup \M^{\textnormal{sing}}$ with the following properties:
\begin{enumerate}[label=(\alph*)]
\item $\M^{\textnormal{reg}}$ is open and connected.
\item $\M^{\textnormal{sing}}$ is a null set with respect to $g_t$ for all $t \in [0,T)$.
\item $g_t$ smoothly converges to a Riemannian metric $g_T$ on $\M^{\textnormal{reg}}$.
\item $(\M^{\textnormal{reg}}, g_T)$ can be compactified to a metric space $(\overline{\M}^{\textnormal{reg}}, \overline{d})$ by adding finitely many points and the differentiable structure on $\M^{\textnormal{reg}}$ can be extended to a smooth orbifold structure on $\overline{\M}^{\textnormal{reg}}$, such that the orbifold singularities are of cone type.
\item Around every orbifold singularity of $(\overline{\M}^{\textnormal{reg}}, \overline{d})$ the metric $g_T$ satisfies $|\nabla^m {\Rm} | < o (r^{-2-m})$ and $|\nabla^m {\Ric}| < O(r^{-1-m})$ as $r \to 0$, where $r$ denotes the distance to the singularity.
Furthermore, for every $\eps > 0$ we can find a smooth orbifold metric $\overline{g}_\eps$ on $\overline{\M}^{\textnormal{reg}}$ (meaning that $\overline{g}_\eps$ pulls back to a smooth Riemannian metric on local orbifold covers) such that the following holds:
\[ \Vert g_T - \overline{g}_\eps \Vert_{C^0(\M^{\textnormal{reg}}; \overline{g}_\eps )} + \Vert g_T - \overline{g}_\eps \Vert_{W^{2,2}(\M^{\textnormal{reg}}; \overline{g}_\eps)} < \eps \]
Here, the $C^0$ and $W^{2,2}$-norms are taken with respect to $\overline{g}_\eps$.
\end{enumerate}
\end{corollary}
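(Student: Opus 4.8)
The plan is to run the now-standard orbifold-compactness machinery for $4$-manifolds (in the spirit of Anderson, Bando--Kasue--Nakajima, Tian and Cheeger--Tian) \emph{along the flow}, using the uniform, scale-invariant $L^2$ curvature bound of Theorem \ref{Thm:L2bound} as the substitute for the Einstein equation, and the Ricci-flow $\eps$-regularity and pseudolocality results of the excerpt in place of their elliptic analogues. Throughout we may assume $\M$ connected (otherwise argue on each component).

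\emph{Step 1 (the bad set is uniformly small and stable in time).} For $\sigma>0$ and $t\in[0,T)$ put $\Sigma_\sigma(t)=\{\rrm(\cdot,t)\le\sigma\}$. By the last estimate of Theorem \ref{Thm:L2bound}, $|\Sigma_\sigma(t)|_t\le (A\chi(\M)+B\vol_0\M)\,\sigma^4$ for all $t\ge T/2$, uniformly in $t$. Next, combining backward pseudolocality (Theorem \ref{Thm:backwpseudoloc}) with Perelman's forward pseudolocality and the distance-distortion estimate (Theorem \ref{thdistcon}), one gets constants $c,\eps_0>0$ so that $\rrm(x,t_0)\ge\sigma$ forces $|\Rm|<c^{-1}\sigma^{-2}$ on $P(x,t_0,c\sigma,-(c\sigma)^2)$ and on the corresponding forward parabolic neighborhood, hence $\rrm(y,t)\ge c\sigma$ for $y\in B(x,t_0,c\sigma/2)$ and $|t-t_0|\le (c\sigma)^2$; so the ``good set'' propagates in time and in space.

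\emph{Steps 2 (limit decomposition and smooth convergence).} Define $\M^{\textnormal{sing}}=\{x\in\M:\liminf_{t\nearrow T}\rrm(x,t)=0\}$ and $\M^{\textnormal{reg}}=\M\setminus\M^{\textnormal{sing}}$. Step 1 shows $\M^{\textnormal{reg}}$ is open. On each compact $K\Subset\M^{\textnormal{reg}}$ the curvature of $g_t$ is uniformly bounded for $t$ near $T$, so by Shi's estimates \cite{Sh:1}, Perelman's no-local-collapsing (equivalently, the lower bound on $\nu[g_0,2T]$) and Theorem \ref{thdistcon}, the metrics $(K,g_t)$ have uniformly bounded geometry and are uniformly bi-Lipschitz; since $|\partial_t g_t|=2|\Ric_{g_t}|$ is bounded on $K$ near $T$, $g_t\to g_T$ in $C^\infty_{\mathrm{loc}}(\M^{\textnormal{reg}})$, giving (c). The uniform bound $|\Sigma_\sigma(t)|_t\le (A\chi+B\vol_0\M)\sigma^4$ together with Step 1 shows $\M^{\textnormal{sing}}$ is $g_t$-null for every $t$, which is (b), and (via a Vitali covering with the same bound) that $\M^{\textnormal{sing}}$ is covered, for every $\delta$, by finitely many $g_T$-balls of total volume $<\delta$. \emph{Step 3 (finiteness of $\M^{\textnormal{sing}}$ and cone structure).} Around $q\in\M^{\textnormal{sing}}$ pick $x_k\to q$, $t_k\nearrow T$ with $Q_k=|\Rm|(x_k,t_k)\to\infty$; by Corollary \ref{Cor:blowupALE} the blow-ups converge to a nonflat Ricci-flat ALE $4$-manifold, which carries a definite amount of $L^2$ curvature (a universal lower bound, e.g. from the Chern--Gauss--Bonnet identity for ALE spaces). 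Since $\int_\M|\Rm|^2(\cdot,t)\,dg_t$ is bounded uniformly in $t$ by Theorem \ref{Thm:L2bound}, only finitely many such $q$ occur, so $\M^{\textnormal{sing}}=\{q_1,\dots,q_N\}$ is finite; removing finitely many points from connected $\M$ keeps $\M^{\textnormal{reg}}$ connected, giving (a). For the local structure at $q=q_k$, rescale $g_T$ on $B(q,r_j)\setminus\{q\}$ by $r_j^{-2}$, $r_j\to0$: on every annulus $\{\tfrac1a\le r_j^{-1}\dist_{g_T}(q,\cdot)\le a\}$ the scale-invariant energy $\int|\Rm_{g_T}|^2$ tends to $0$ (all energy sits at the finitely many $q_i$), while $R_{r_j^{-2}g_T}=r_j^2R_{g_T}\to0$; an $\eps$-regularity argument (Corollary \ref{costrongreg}, using that small $L^2$-curvature plus volume non-collapsing forces the isoperimetric ratio to be near-Euclidean) then gives curvature bounds on these annuli, hence subconvergence, and a dyadic-annulus iteration using finiteness of the total energy identifies the limit as a fixed flat cone $\IR^4/\Gamma_k$. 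A removable-singularity/elliptic-bootstrap argument extends the differentiable structure across $q_k$ to a smooth orbifold structure with cone singularity, proving (d); the decay estimates $|\nabla^m\Rm|=o(r^{-2-m})$, $|\nabla^m\Ric|=O(r^{-1-m})$ follow from the cone asymptotics and parabolic/elliptic regularity, and the approximating orbifold metrics $\overline g_\eps$ are obtained by interpolating $g_T$ with the flat cone metric near each $q_k$, the $W^{2,2}$-error being controlled by $\int_{B(q_k,\rho)}|\Rm|^2\,dg_T<\eps$ for $\rho$ small. This gives (e), and the global Cheeger--Gromov convergence of $(\M,g_t)$ to $(\overline{\M}^{\textnormal{reg}},\overline d)$ follows by combining the $C^\infty_{\mathrm{loc}}$ convergence on $\M^{\textnormal{reg}}$ with the metric completion adding the $q_k$.

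The main obstacle is Step 3: upgrading ``finitely many curvature-concentration points'' to ``genuine flat-cone orbifold singularities'' — ruling out curvature escaping to $q_k$ along a sequence of distinct scales (cascading bubbles) and then carrying out the removable-singularity bootstrap \emph{without} an Einstein equation for $g_T$ (one only has $R_{g_T}\le R_0$ and $\int|\Rm_{g_T}|^2<\infty$). This is precisely where the scale invariance and the uniformity up to time $T$ of the $L^2$ bound of Theorem \ref{Thm:L2bound}, the Ricci-flow $\eps$-regularity Corollary \ref{costrongreg}, and the fact that all blow-up limits are static Ricci-flat ALE (Corollary \ref{Cor:blowupALE}) enter essentially; with these in hand an Anderson-type $\eps$-regularity on annuli excludes intermediate bubbling exactly as in the Einstein case, and the remaining analysis (orbifold regularity, decay rates, $W^{2,2}$ approximation) is as in \cite{Anderson}.
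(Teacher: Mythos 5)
Your overall architecture (regular/singular decomposition via $\rrm$, $L^2$-bound plus blow-ups to ALE Ricci-flat spaces, cone structure at singularities, interpolated orbifold metric for (e)) matches the paper's. But there is a concrete gap in Step 3 that also causes you to deviate from what the corollary actually asserts.

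You claim that $\M^{\textnormal{sing}}$ is a \emph{finite subset} $\{q_1,\dots,q_N\}$ of $\M$, arguing that around each $q\in\M^{\textnormal{sing}}$ a blow-up produces a nonflat Ricci-flat ALE space carrying a definite amount $\delta>0$ of $L^2$-curvature, and then dividing the global $L^2$-bound by $\delta$. This counts \emph{disjoint} energy concentrations, not points of $\M^{\textnormal{sing}}$: if $\rrm(\cdot,t)\to 0$ along a whole arc (or Cantor set) in $\M$, every point on it lies in $\M^{\textnormal{sing}}$ and admits a blow-up as in Corollary~\ref{Cor:blowupALE}, yet the corresponding bubbling regions overlap and contribute to the same energy packet — your counting argument puts no bound on the cardinality of $\M^{\textnormal{sing}}$. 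In fact the corollary does not claim $\M^{\textnormal{sing}}$ is finite; it claims $\M^{\textnormal{sing}}$ is \emph{null} and that the $g_T$-metric completion of $\M^{\textnormal{reg}}$ adds finitely many points — a strictly weaker assertion, because $\M^{\textnormal{sing}}$ can be a non-discrete null set in the topology of $\M$ that collapses to finitely many cone points under $g_T$. The paper makes the distinction precise: Lemma~\ref{Lem:yit} extracts a \emph{uniformly bounded number} $N$ of ``concentration centers'' $y_{1,t},\dots,y_{N,t}$ at each time with $\rrm(x,t)>\tfrac16\min_i d_t(x,y_{i,t})$, and then a covering argument (the inclusion (\ref{eq:Msingballs})) shows $\M^{\textnormal{sing}}\subset\bigcup_i B(y_{i,t},t,s)$ for all small $s$ and $t$ close to $T$. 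That is what produces the compactification by $\le N$ points, without ever asserting $\M^{\textnormal{sing}}$ is finite. Your proof needs an analogous device; the bubble-count alone does not provide it.

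Two smaller remarks. First, you define $\M^{\textnormal{sing}}$ via $\liminf$ whereas the paper uses $\limsup$; these coincide once one shows $\lim_{t\nearrow T}\rrm(x,t)$ exists, which the paper derives from backward pseudolocality and the $1$-Lipschitz property of $\rrm(\cdot,t)$ — worth noting explicitly, since with the $\limsup$ definition openness of $\M^{\textnormal{reg}}$ is not immediate. Second, the identification of each cone link as \emph{connected} (hence a genuine $\IR^4/\Gamma$ orbifold point rather than several cones glued at a vertex) is the real content of Lemma~\ref{Lem:itsanorbifold}, established by running a Cheeger--Gromoll splitting argument on a further blow-up containing a line; your phrase ``an Anderson-type $\eps$-regularity on annuli excludes intermediate bubbling'' gestures at the right issue but does not supply this step, which is non-trivial precisely because $g_T$ is not Einstein.
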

A slightly weaker characterization was obtained independently in \cite{Simon:2} and in \cite{CW:1} under an additional curvature assumption.
The proof of this corollary can be found in section \ref{sec:L2bound}.

This paper is organized as follows:
In section \ref{sec:Preliminaries}, we introduce the notions and tools that we will use throughout the paper.
In section \ref{sec:heatkernelcutoff}, we prove a time derivative bound for solutions of the heat equation on a Ricci flow, which is independent of the Ricci curvature.
Then we use this bound to derive the distance distortion estimates, Theorem \ref{thdistcon} and Corollary \ref{coBDP}, and construct a well behaved cutoff function in space-time in Theorem \ref{Thm:cutoffnoLp}.
Next, in section \ref{sec:heatkernelbound}, we use this cutoff function to establish mean value inequalities for solutions of the heat and conjugate heat equation in section \ref{sec:meanvalue} and eventually the Gaussian bounds for the heat kernel, Theorem \ref{thkernelUP}.
In section \ref{sec:pseudoloc}, we prove the backward pseudolocality Theorem \ref{thbackcurv} and the strong $\varepsilon$-regularity theorem, Corollary \ref{costrongreg}.
Finally, we prove the $L^2$ curvature bound in dimension 4, Theorem \ref{Thm:L2bound}, and its consequences.

\section{Preliminaries} \label{sec:Preliminaries}
Consider a Ricci flow $(\M, (g_t)_{t \in [0,T)})$, $T < \infty$ on a compact $n$-manifold $\M$.
For any $(x_0, t_0) \in \M \times [0,T)$, $r > 0$, we will frequently use the notation
\begin{alignat*}{3}
 Q^+(x_0, t_0, r) &= \{ (x, t) \in \M \times [0,T) \;\; : \;\; d_t (x_0, x)<r, \quad & t_0 \leq & t \leq t_0+r^2 \} \\
 Q^-(x_0, t_0, r) &= \{ (x, t) \in \M \times [0,T) \;\; : \;\; d_t (x_0, x)<r, \quad & t_0 - r^2 \leq & t \leq t_0 \} 
\end{alignat*}
to denote the ``forward'' and ``backward'' parabolic cubes.

Next, note that by applying the maximum principle to the evolution equation $\partial_t R = \Delta R + 2 |{\Ric}|^2 \geq \Delta R + \frac2{n} R^2$, we can deduce the following lower bound on the scalar curvature:
\[ R(\cdot, t) > - \frac{n}{2t}. \]
So, for instance, if we also have an upper bound of the form $R \leq R_0 < \infty$ and if we are working at a scale $0 < r_0 \leq \min \{ R_0^{-1/2}, \sqrt{t_0} \}$, then  we have the two-sided bound
\[ |R| \leq n r_0^{-2} \qquad \text{on} \qquad \M \times [t_0 - \tfrac12 r_0^2, t_0]. \]
In such a situation, we will often rescale parabolically and assume that $r_0 = 1$, $t_0 \geq 1$ and
\[ |R| \leq n \qquad \text{on} \qquad \M \times [t_0 - \tfrac12, t_0]. \]

Assume for simplicity, that $|R| \leq R_0$ everywhere.
By the evolution equation of the volume form, $\partial_t dg_t = - R dg_t$, the bound on the scalar curvature implies the following distortion estimate for the volume element:
\begin{equation} \label{eq:Prel-volelementdistortion}
 e^{- R_0 |t-s|} dg_s \leq dg_t \leq e^{R_0 |t-s|} dg_s \qquad s, t \in [0,T).
\end{equation}
This implies that for any measurable subset $S \subset \M$, we have
\begin{equation} \label{eq:Prel-voldistortion}
 e^{- R_0 |t-s|} |S|_s \leq |S|_t \leq e^{R_0 |t-s|} |S|_s \qquad s, t \in [0,T).
\end{equation}

Next, we recall Perelman's $\mathcal{W}$-functional (cf \cite{P:1}):
\[ \mathcal{W} [ g, f, \tau ] = \int_{\M} \big( \tau ( |\nabla f|^2 + R) + f - n \big) (4 \pi \tau)^{-\frac{n}2} e^{-f} dg \]
Here $g$ is a Riemannian metric, $f \in C^1(\M)$ and $\tau > 0$.
The fact that this functional involves no curvature term other than the scalar curvature, comes very handy in the study of bounded scalar curvature.
Recall that the derived functionals
\[ \mu [g, \tau] = \inf_{\int_{\M} (4 \pi \tau)^{-\frac{n}2} e^{-f} dg = 1} \mathcal{W} [ g, f, \tau ] \]
and
\[ \nu [g, \tau] = \inf_{0 < \tau' < \tau} \mu [g, \tau'], \qquad \nu[g] = \inf_{\tau > 0} \mu [g, \tau] \]
are monotone in time and that $\nu [g, \tau], \nu[g] \leq 0$.
By this, we mean that $\partial_t \mu [g_t, \tau - t], \partial_t \nu [g_t, \tau - t], \partial_t \nu[g_t] \geq 0$.

We now mention two important consequences, which have been derived from this monotonicity and which we will be using frequently in this paper without further mention.
The first consequence is Perelman's celebrated No Local Collapsing Theorem (see \cite{P:1}), which can be phrased as follows:
Let $(x_0,t_0) \in \M \times [0,T)$, $0 < r_0 < \sqrt{t_0}$ and assume that $R < r_0^{-2}$ on $B(x_0, t_0, r_0)$.
Then
\begin{equation} \label{eq:kappa1}
  |B(x_0,t_0,r_0)|_{t_0} > \kappa_1 r_0^n,
\end{equation}
for some constant $\kappa_1$, which only depends on $\nu[g_0, 2T]$ and $n$.
On the other hand, a non-inflating property was shown in \cite{Z11:1} (see also \cite{CW:2}).
This property states that whenever $(x_0,t_0) \in \M \times [0,T)$, $0 < r_0 < \sqrt{t_0}$
\[ R (\cdot, t) \leq \frac{\alpha}{t_0 - t} \qquad \text{on} \qquad Q^- (x_0, t_0, r_0), \]
then
\begin{equation} \label{eq:kappa2}
 | B(x_0,t_0,r_0) |_{t_0} < \kappa_2 r_0^n,
\end{equation}
where $\kappa_2$ again only depends on $\nu[g_0, 2T]$ and $n$.

Next, we discuss heat equations coupled with the Ricci flow.
The forward heat equation
\begin{equation} \label{he}
\begin{cases}
\partial_t u - \Delta u = 0,\\
\p_t g_t = - 2 \Ric_{g_t}
\end{cases}
\end{equation}
has a conjugate,
\begin{equation} \label{che}
\begin{cases}
- \partial_t u - \Delta u + R u =0 \\
\p_t g_t = - 2 \Ric_{g_t},
\end{cases}
\end{equation}
which evolves backwards in time.
So for any two compactly supported functions $u, v \in C^\infty (\M \times (0,T))$ we have
\[ \int_{\M \times (0,T)} (\partial_t u - \Delta u) \cdot v dg_t dt = \int_{\M \times (0,T)} u \cdot (- \partial_t v - \Delta v + Rv ) dg_t dt. \]
We will denote by $K (x, t; y, s)$, where $x, y \in \M$, $0\leq s < t < T$, the heat kernel associated with the heat equation (\ref{he}), meaning that for any fixed $(y,s) \in \M \times [0,T)$
\begin{equation} \label{eq:hkdefinitionsec2}
 (\partial_t - \Delta_x ) K(\cdot, \cdot ; y, s) = 0 \qquad \text{and} \qquad \lim_{t \searrow s} K(\cdot, t; y,s) = \delta_y.
\end{equation}
Then $K(x,t; \cdot, \cdot)$ is the kernel associated to the conjugate heat equation (\ref{che}), meaning that for any fixed $(x,t) \in \M \times [0,T)$
\[ (-\partial_s - \Delta_y + R ) K(x, t ; \cdot, \cdot) = 0 \qquad \text{and} \qquad \lim_{s \nearrow t} K(x,t ; \cdot, s) = \delta_x. \]

In \cite[Theorem 3.2]{Z06:1} (see also \cite[Theorem 5.1]{CH:1}), the second author has obtained the following derivative bound for positive solutions $u \in C^\infty (\M \times (0,T))$ of the heat equation (\ref{he}):
\begin{equation} \label{eq:Prel-derbound}
 \frac{|\nabla u(x,t)|}{u(x,t)}  \leq \sqrt{\frac1t} \sqrt{\log \frac{J}{u(x,t)}},
\end{equation}
whenever $0 < u < J$ on $\M \times (0, t]$.
Note that this bound is sharp for the heat kernel on Euclidean space with $J = \sup_{\M \times (0,t]} u$ and does not assume boundedness of the scalar curvature.

Lastly, we mention two heat kernel estimates, which have been obtained by the second author in \cite[equations (1.5), (1.7), etc.]{Z11:1}.
The first estimate is a global upper bound on the heat kernel.
Assume that $x, y \in \M$, $0 \leq s < t < T$ and that we have the lower scalar curvature bound $R \geq - R_0$ on $\M \times [s,t]$.
Then
\begin{equation} \label{eq:Prel-upperhkbound}
 K (x,t; y,s) < \frac{C}{(t-s)^{n/2}},
\end{equation}
where $C$ only depends on $\nu[g_0, 2T], n$ and $(t-s) R_0$.

The second estimate is a distance dependent lower bound on the heat kernel.
This bound is a consequence of (\ref{eq:Prel-derbound}) and Perelman's Harnack inequality.
Let $x, y \in \M$, $0 \leq s < t < T$ and consider a smooth curve $\gamma : [s,t] \to \M$ between $(y,s)$ and $(x,t)$, i.e. $\gamma (s) = y$ and $\gamma(t) = x$.
Its $\mathcal{L}$-length is defined as
\[ \mathcal{L} (\gamma) := \int_{s}^t \sqrt{t-t'} \big( |\gamma'(t')|^2_{t'} + R(\gamma(t'), t') \big) dt'. \]
The reduced distance between $(x, t)$ and $(y,s)$ is defined as
\[ \ell_{(x,t)} (y,s) := \frac1{2\sqrt{t-s}} \inf \big\{ \mathcal{L} (\gamma) \;\; : \;\; \text{$\gamma : [s,t] \to \M$ between $(y,s)$ and $(x,t)$} \big\}. \]
Then by \cite[Corollary 9.5]{P:1} we have
\begin{equation} \label{eq:Prel-Perelmanslowerhkbound}
 K(x,t; y,s) \geq \frac{1}{(4\pi (t-s))^{n/2}} e^{- \ell_{(x,t)} (y,s)}.
\end{equation}
Applying (\ref{eq:Prel-Perelmanslowerhkbound}) for $x=y$ and (\ref{eq:Prel-derbound}) at time $t$ yields the following Gaussian lower bound on the heat kernel (see \cite{Z11:1} for more details):
Assume that we have the upper scalar curvature bound $R(y, t') \leq R_0$ for all $t' \in [s,t]$.
Then
\begin{equation} \label{eq:Prel-lowerhkbound}
 K(x,t; y,s) > \frac{C^{-1}}{(t-s)^{n/2}} \exp \bigg( {- \frac{ d_t^2(x,y)}{2(t-s)}} \bigg),
\end{equation}
where $C$ only depends on $(t-s) R_0$.

\section{Heat kernel bounds, distance distortion estimates and the construction of a cutoff function} \label{sec:heatkernelcutoff}
In this section, we first derive a bound on the time derivative (or Laplacian) of positive solutions of the heat equation, which is independent of the Ricci curvature.
This bound is then used to obtain distance distortion estimates and to construct a space-time cutoff function.

\begin{lemma} \lab{leddu}
Let $(\M^n, (g_t)_{t \in [0,T)})$ be a Ricci flow on a compact $n$-manifold and let $u \in C^\infty (\M \times [0,T))$ be a positive solution to the heat equation $\p_t u = \Delta u$, $u(\cdot, 0) = u_0$ that is coupled to the Ricci flow.
Then the following is true:
\begin{enumerate}[label=(\alph*)]
\item There is a constant $B < \infty$, depending only on the dimension of $\M$, such that the following holds:
If $0<u \le a$ on $\M \times [0, T]$ for some constant $a > 0$, then for all $(x, t) \in \M \times (0,T)$
\be
\lab{ddu<A/t}
\bigg( |\Delta u | + \frac{| \d u |^2}{u} - a R \bigg) (x, t) \le \frac{ Ba}{t}
\ee
\item If $0<u \le 1$ on $\M \times [0, T]$, then for all $(x,t) \in \M \times [0,T)$
\[ \bigg( \Delta u + \frac{| \d u |^2}{u} - R \bigg) (x, t) \le \sup_{\M} \bigg( \Delta u_0 + \frac{| \d u_0 |^2}{u_0} - R(\cdot, 0) \bigg)
+t \big( 1+ \tfrac{4}{n} \big)  \sup_{\M} \frac{| \d u_0 |^4}{u^2_0} \]
and
\[ \bigg( -\Delta u + \frac{| \d u |^2}{u} - R \bigg) (x, t) \le
\sup_{\M} \bigg( -\Delta u_0 + \frac{| \d u_0 |^2}{u_0} - R(\cdot, 0) \bigg)
+ t \sup_{\M} \frac{| \d u_0 |^4}{u^2_0}. \]
\end{enumerate}
\end{lemma}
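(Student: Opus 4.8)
The engine behind both statements is a single evolution inequality. The plan is to first record that $P:=|\nabla u|^2/u$ (smooth and positive since $u>0$) satisfies, along the coupled flow,
\[
(\partial_t-\Delta)P \;=\; -\frac{2}{u}\Big|\nabla^2 u-\frac{\nabla u\otimes\nabla u}{u}\Big|^2 \;=\; -2u\,|\nabla^2\log u|^2 \;\le\; 0 ,
\]
which follows from the Bochner-type identity $(\partial_t-\Delta)|\nabla u|^2=-2|\nabla^2 u|^2$ together with $(\partial_t-\Delta)u=0$. Thus $P$ is a subsolution of the heat equation, so $t\mapsto\sup_\M P(\cdot,t)$ is non-increasing and $\sup_\M P(\cdot,t)^2\le\sup_\M P(\cdot,0)^2=\sup_\M(|\nabla u_0|^4/u_0^2)$. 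I will also invoke two facts from Section~\ref{sec:Preliminaries}: the sharp gradient bound (\ref{eq:Prel-derbound}), which for $0<u\le a$ gives $|\nabla u|^2/u^2\le t^{-1}\log(a/u)$ and hence, maximizing $s\mapsto s\log(a/s)$ at $s=a/e$, the pointwise bound $P\le a/(et)$; and the scalar curvature lower bound $R(\cdot,t)>-n/(2t)$, i.e. $-aR<na/(2t)$.

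For part (b) I would set $F_\pm:=\pm\Delta u+P-R$ (the coefficient of $R$ being $1$, the upper bound of $u$). Combining $(\partial_t-\Delta)\Delta u=2\langle\Ric,\nabla^2 u\rangle$, $(\partial_t-\Delta)R=2|\Ric|^2$, the formula above for $P$, and $\nabla^2 u=u(\nabla^2\log u+\nabla\log u\otimes\nabla\log u)$, one gets $(\partial_t-\Delta)F_\pm=\pm2\langle\Ric,\nabla^2 u\rangle-2u|\nabla^2\log u|^2-2|\Ric|^2$. Bounding $\pm2\langle\Ric,\nabla^2 u\rangle$ by Cauchy--Schwarz and using $u\le1$ to replace $|\nabla^2 u|^2$ by $u|\nabla^2\log u|^2+P^2$ up to a constant — the key being that $-2u|\nabla^2\log u|^2$ carries a factor $u$ rather than $u^2$ while $u^2|\nabla\log u|^4=P^2$ — the two negative terms absorb everything, leaving $(\partial_t-\Delta)F_\pm\le C_\pm P^2$ with dimensional constants (one may take $C_+=1+4/n$, $C_-=1$). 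Since $C_\pm P^2\le C_\pm\sup_\M P(\cdot,t)^2=:\phi_\pm(t)$ is a function of $t$ alone, the weak maximum principle applied to $F_\pm(\cdot,t)-\int_0^t\phi_\pm$ gives $\sup_\M F_\pm(\cdot,t)\le\sup_\M F_\pm(\cdot,0)+\int_0^t\phi_\pm$, and monotonicity of $\sup_\M P$ bounds the integral by $C_\pm\,t\,\sup_\M(|\nabla u_0|^4/u_0^2)$. This is exactly the two claimed inequalities.

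Part (a) wants a bound with no dependence on the initial data, so the plan is a Li--Yau type maximum principle for $tF_\pm$, where now $F_\pm:=\pm\Delta u+P-aR$; its evolution is as in part (b) but with $-2a|\Ric|^2$ in place of $-2|\Ric|^2$. Decomposing $\Ric$ and $\nabla^2\log u$ into trace and trace-free parts and tuning the Cauchy--Schwarz constants so that every uncontrolled $|\Ric|^2$ and $R^2$ term is absorbed by $-2a|\Ric|^2$ (using $0<u\le a$ at each step), I expect to reach an inequality of the shape
\[
(\partial_t-\Delta)F_\pm \;\le\; -\frac{2u}{n}\Big(\Delta\log u\mp\tfrac{R}{2}\Big)^2 + \frac{C}{a}\,P^2 .
\]
I would then apply the weak maximum principle to $tF_\pm$ on $\M\times[0,t_0]$: it vanishes at $t=0$, so either its maximum is $\le 0$ or it is attained at some $(x_\ast,t_\ast)$ with $t_\ast>0$, where $\nabla(tF_\pm)=0$, $\Delta(tF_\pm)\le0$, $\partial_t(tF_\pm)\ge0$, hence $t_\ast(\partial_t-\Delta)F_\pm\ge-F_\pm$ and $F_\pm(x_\ast,t_\ast)>0$. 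Using $\Delta\log u=(\Delta u-P)/u$ one computes $u(\Delta\log u\mp\tfrac R2)$ explicitly in terms of $F_\pm$, $P$ and $R$, and a short case distinction on the sign of $R$ (using $P\le a/(et_\ast)$ and, when $R<0$, $|R|<n/(2t_\ast)$) shows $u\,|\Delta\log u\mp\tfrac R2|\ge\tfrac12 F_\pm$ once $F_\pm\ge C(n)\,a/t_\ast$. Because $u\le a$ this gives $-\tfrac{2u}{n}(\Delta\log u\mp\tfrac R2)^2=-\tfrac{2}{nu}\big(u(\Delta\log u\mp\tfrac R2)\big)^2\le-F_\pm^2/(2na)$, and substituting into $t_\ast(\partial_t-\Delta)F_\pm\ge-F_\pm$ together with $P^2\le a^2/(e^2t_\ast^2)$ produces a quadratic inequality in $F_\pm$ forcing $F_\pm(x_\ast,t_\ast)\le B(n)\,a/t_\ast$. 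Since $(x_\ast,t_\ast)$ maximizes $tF_\pm$, we get $F_\pm\le B(n)a/t$ everywhere, hence $|\Delta u|+P-aR=\max(F_+,F_-)\le B(n)a/t$.

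I expect the curvature bookkeeping in part (a) to be the main obstacle: with no bound on $R$ from above and none on $\Ric$, the argument closes only if the Cauchy--Schwarz constants are chosen so that the genuinely uncontrolled $|\Ric|^2$ and $R^2$ terms cancel exactly against $-2a|\Ric|^2$, and one must then verify that the surviving Bochner term $-\tfrac{2u}{n}(\Delta\log u\mp\tfrac R2)^2$ still dominates $F_\pm^2$ from below despite $R$ appearing inside it — this is what forces the sign-of-$R$ case analysis and the use of $R>-n/(2t)$. Beyond that, the only further care required is the usual one: running the weak maximum principle on a compact manifold with a time-dependent metric, and handling the singularity of the estimates as $t\to0$, which is neutralized by the multiplier $t$ in $tF_\pm$ (zero at $t=0$).
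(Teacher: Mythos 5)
Your proposal is correct, and for part (b) it is essentially the paper's argument: the same evolution formula for $P=|\nabla u|^2/u$, the same observation that $P$ is a subsolution, and the same Cauchy--Schwarz/complete-the-square bookkeeping (the paper obtains the exact constants $1+\tfrac4n$ and $1$ by absorbing the $-|\Ric|^2$ terms into two complete squares whose traces give $L_1, L_2$, then further splitting the $L_2$-square as $-\tfrac1{2n}(L_2-4P)^2-\tfrac1{2n}L_2^2+(1+\tfrac4n)P^2$; your coarser Cauchy--Schwarz gives $C_\pm=2$, but the sharp constants are indeed reachable as you parenthetically indicate).

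For part (a) you take a genuinely different route. The paper normalizes $a=1$ and, crucially, chooses the completed square so that its trace is exactly $\mp L_i$: it arrives at the Riccati-type inequality $(\partial_t-\Delta)L_i\le -\tfrac{c}{n}L_i^2 + C\,P^2 \le -\tfrac{c}{n}L_i^2 + \tfrac{C}{e^2 t^2}$, and then compares $L_i$ directly against the explicit supersolution $B/t$, where $B$ solves an algebraic equation making $B/t$ an exact match. This is clean because the quantity being squared is the quantity one wants to bound, so no case analysis on $R$ is needed. You instead keep the tensorial square $|\nabla^2\log u\mp\tfrac12\Ric|^2$, which does not trace to $F_\pm$, and compensate by multiplying through by $t$ and running a Li--Yau-type argument at the maximum of $tF_\pm$. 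This is a workable alternative: I checked that your claimed inequality $(\partial_t-\Delta)F_\pm\le -\tfrac{2u}{n}(\Delta\log u\mp\tfrac R2)^2+\tfrac{C}{a}P^2$ is achievable (e.g.\ with $C=\tfrac23$, by splitting $-2a|\Ric|^2\le -\tfrac{3a}{2}|\Ric|^2-\tfrac{a}{2}|\Ric|^2$, using $u\le a$ on the first piece to complete the square with $\pm2u\langle\Ric,\nabla^2\log u\rangle -2u|\nabla^2\log u|^2$, and using Young's inequality with the remaining $-\tfrac{3a}{2}|\Ric|^2$ to control $\pm2u\Ric(\nabla\log u,\nabla\log u)$), and the sign-of-$R$ case analysis you outline does close: using $u(\Delta\log u-\tfrac R2)=F_+-2P+(a-\tfrac u2)R$ and $u(\Delta\log u+\tfrac R2)=-F_-+(\tfrac u2-a)R$ together with $P\le a/(et_*)$ and $R>-n/(2t_*)$ gives $u|\Delta\log u\mp\tfrac R2|\ge\tfrac12 F_\pm$ once $F_\pm\ge C(n)a/t_*$, and feeding $-\tfrac{2u}{n}(\cdots)^2\le -F_\pm^2/(2na)$ into $-F_\pm\le t_*(\partial_t-\Delta)F_\pm$ yields a quadratic inequality in $F_\pm$ that forces $F_\pm\le B(n)a/t_*$. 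The trade-off is clear: your route avoids having to guess the supersolution but pays for it with the sign analysis on $R$ at the maximum point; the paper's completed square is tailored precisely so that the Riccati structure appears automatically and no such analysis is needed.

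One small caveat worth noting: the inequality (\ref{eq:Prel-derbound}) is stated with strict inequality $u<J$; with $J=a$ and $u\le a$ this should be run with $J=a+\varepsilon$ and $\varepsilon\to 0$, but this affects both your argument and the paper's equally.
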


\begin{proof} \textit{Part (a).}
Without loss of generality we may assume $a=1$ so that $0 < u \le 1$ everywhere.
For convenience of presentation we write
\[ L_1 = - \Delta u + \frac{|\d u|^2}{u} - R, \qquad L_2 = \Delta u + \frac{|\d u|^2}{u} - R. \]

In the following we will show that $L_1, L_2 \leq \frac{B}t$.
Using standard identities, it is easy to check that in a local orthonormal frame we have
 \begin{alignat}{1} 
 (\partial_t - \Delta ) \Delta u &= \Delta (\partial_t - \Delta ) u + (\p_t \Delta - \Delta \partial_t) u = 2 R_{ij} u_{ij} , \notag \\
 (\partial_t - \Delta ) R &= 2 | {\Ric} |^2, \notag \\
 (\partial_t - \Delta ) \frac{|\d u|^2}{u} &= - \frac{2}{u} \left| u_{ij} -\frac{ u_i u_j}{u} \right|^2. \label{eq:nabu2divu}
\end{alignat}
Therefore, we can write
\[
(\partial_t - \Delta ) L_1 = - \frac{2}{u} \left| u_{ij} -
  \frac{ u_i u_j}{u} \right|^2 - 2 R_{ij} u_{ij}  - 2 R_{ij}^2.
\] Rearranging the terms on the right-hand side and using the assumption that $u \le 1$,
we deduce
\begin{alignat*}{1}
(\partial_t - \Delta )
L_1 &\leq -\left| u_{ij} - \frac{ u_i u_j}{u} \right|^2 -
2 R_{ij} \left( u_{ij}   - \frac{ u_i u_j}{u} \right) -  R_{ij}^2
- 2 R_{ij} \frac{ u_i u_j}{u} -  R_{ij}^2\\
&= - \left| u_{ij} - \frac{ u_i u_j}{u} + R_{ij} \right|^2 - \left| \frac{ u_i u_j}{u} + R_{ij} \right|^2
+ \frac{|\d u|^4}{u^2}\\
&\leq - \frac{1}{n} \left( \Delta u - \frac{|\d u|^2}{u} + R \right)^2 + \frac{|\d u|^4}{u^2}.
\end{alignat*}
Thus
\be
\lab{HQ1>}
(\partial_t - \Delta ) L_1 \leq - \frac{1}{n} L^2_1 + \frac{|\d u|^4}{u^2}.
\ee

From (\ref{eq:Prel-derbound}), one has
\begin{equation} \label{eq:derivativeboundfromZ}
 \frac{|\d u|^2}{u}  \le \frac{u}{t} \log \frac{1}{u} \le \frac{1}{e t}.
\end{equation}
Using this and (\ref{HQ1>}), we deduce
\[ (\partial_t - \Delta ) L_1 \leq - \frac{1}{n} L^2_1 + \frac{1}{e^2 t^2}. \]
Let $B$ be a positive number such that
\[ \frac{B+e^{-2}}{B^2} = \frac{1}{n}. \]
Then
\[ (\partial_t - \Delta ) (B/t) = - \frac{B+e^{-2}}{B^2} (B/t)^2 + \frac{1}{e^2 t^2}=
- \frac{1}{n} (B/t)^2 + \frac{1}{e^2 t^2}. \]
Hence
\[ (\partial_t - \Delta ) \Big( L_1 - \frac{B}{t} \Big) \leq - \frac{1}{n} \Big(L_1+ \frac{B}{t} \Big) \Big(L_1- \frac{B}{t} \Big). \]
The maximum principle then implies $L_1 \le \frac{B}{t}$.

In order to prove the upper bound on $L_2$, we compute
\begin{alignat*}{1}
(\partial_t - \Delta ) L_2 &= - \frac{2}{u} \left| u_{ij} - \frac{ u_i u_j}{u} \right|^2 + 2 R_{ij} u_{ij}  - 2 R_{ij}^2\\
  &\leq - \left| u_{ij} - \frac{ u_i u_j}{u} \right|^2 +
2 R_{ij} \left( u_{ij}   - \frac{ u_i u_j}{u} \right) -  R_{ij}^2
+ 2 R_{ij} \frac{ u_i u_j}{u} - R_{ij}^2\\
&= - \left| u_{ij} - \frac{ u_i u_j}{u} - R_{ij} \right|^2 - \left| \frac{ u_i u_j}{u} - R_{ij} \right|^2
+ \frac{|\d u|^4}{u^2} \displaybreak[1] \\
&\leq - \frac{1}{n} \left( \Delta u - \frac{|\d u|^2}{u} - R \right)^2 +\frac{|\d u|^4}{u^2} \displaybreak[1] \\
&=- \frac{1}{n} \left( L_2  - 2 \frac{|\d u|^2}{u} \right)^2 + \frac{|\d u|^4}{u^2} \displaybreak[1] \\
&= - \frac1{2n} \left( L_2 - 4 \frac{|\nabla u|^2}{u} \right)^2 - \frac1{2n} L_2^2 + ( 1 + \tfrac4n ) \frac{|\nabla u |^4}{u^2} \displaybreak[1] \\
&\leq - \frac1{2n} L_2^2  + ( 1 + \tfrac4n ) \frac{|\nabla u |^4}{u^2}
\end{alignat*}
Therefore, using again (\ref{eq:derivativeboundfromZ}), we obtain
\[ (\partial_t - \Delta ) L_2 \leq - \frac{1}{2n} L^2_2  + \frac{1+ \tfrac{4}{n}}{e^2} \cdot \frac1{t^2}.
\]
We now argue similarly as before.
Choose $B > 0$ such that
\[ B^{-1} + \frac{1 + \frac4n}{e^2} B^{-2} = \frac1{2n}. \]
Then
\[ (\partial_t - \Delta ) (B/t) = - \frac1{2n} (B/t)^2 + \frac{1+ \tfrac{4}{n}}{e^2} \cdot \frac1{t^2}. \]
And hence
\[ (\partial_t - \Delta ) \Big( L_2 - \frac{B}t \Big) \leq - \frac1{2n} \Big( L_2 + \frac{B}{t} \Big) \Big( L_2 - \frac{B}{t} \Big). \]
So by the maximum principle $L_2 \leq \frac{B}{t}$, which proves (\ref{ddu<A/t}).

\textit{Part (b).}
By (\ref{eq:nabu2divu}), we know that the function $|\nabla u|^2/u$ is a subsolution to the heat equation.
So the maximum principle implies that
\[ \frac{|\nabla u|^2}{u} \le \sup_{\M} \frac{|\nabla u_0|^2}{u_0}. \]
The remaining estimates then follow using the bounds for $(\partial_t - \Delta )L_1$ and $(\partial_t - \Delta ) L_2$ from before along with the maximum principle.
\end{proof}

The previous bound on the gradient and Laplacian of solutions to the heat equation, can be used to derive a local bound on the distortion of the distance between two points.

\begin{proof}[Proof of Theorem \ref{thdistcon}]
\textit{Step 1.}
We first establish the upper bound on $d_t (x_0, y_0)$ in the case in which $r_0 \leq d_{t_0}(x_0, y_0) \leq 2 r_0$.
After rescaling the flow parabolically, we may assume without loss of generality that $r_0 = 1$.
Then we have $R \leq 1$ on $U \times [t, t_0]$ or $U \times [t_0, t]$ and $t_0 \geq 1$.
Moreover, as explained in section \ref{sec:Preliminaries}, we have the lower bound $R \geq -n$ on $\M \times [t_0 - \tfrac12, t_0]$.
Let $\gamma  : [0,1] \to U$ be a time-$t_0$ minimizing geodesic between $x_0, y_0$.

By \cite[end of 7.1]{P:1}, we can find a point $z \in \M$ such that the reduced distance between $(x_0, t_0)$ and $(z, t_0 - \tfrac12)$ satisfies
\[ l_{(x_0, t_0)} (z, t_0 - \tfrac12 ) \leq  \frac{n}2. \]
Consider the heat kernel $K(x,t) = K(x,t; z, t_0 - \frac12)$ centered at $(z, t_0- \frac12)$, i.e. $\partial_t K = \Delta K$ (see (\ref{eq:hkdefinitionsec2}) for more details).
First, we note that
\[ \frac{d}{dt} \int_{\M} K(\cdot, t) dg_t = \int_{\M} \big( \Delta K (\cdot, t) - R(\cdot, t) K (\cdot, t) \big) dg_t \leq n \int_{\M} K(\cdot, t) dg_t. \]
So
\begin{equation} \label{eq:L1boundk}
 \int_{\M} K(\cdot, t) dg_t \leq e^{n (t - (t_0 - \frac12))}.
\end{equation}
Recall also that by (\ref{eq:Prel-upperhkbound}) and (\ref{eq:Prel-Perelmanslowerhkbound}) there is a uniform constant $B_0 < \infty$ such that for all $t \in [t_0 - \frac14, t_0 + \frac14]$
\begin{multline} \label{eq:lowerbound}
 K(\cdot, t) < \frac{B_0}{(t-(t_0 - \frac12))^{n/2}} \quad \text{on} \quad \M \qquad \text{and} \\ \qquad
K(x_0, t_0) \geq \frac{1}{(4\pi (t_0 - (t_0 - \tfrac12))^{n/2}} e^{- l_{(x_0, t_0)} (z, t_0 - 1/2)} >  \frac{B_0^{-1}}{(2\pi)^{n/2}}.
\end{multline}
By Lemma \ref{leddu}(a) applied to the region $U \times [t, t_0]$ or $U \times [t_0, t]$, we get that there is a uniform constant $B < \infty$ such that
\begin{equation} \label{eq:nabkdtk2bound}
 K < B \quad \text{on} \quad \M \times [t, t_0] \qquad \text{and} \qquad |\partial_t K| < B \quad \text{on} \quad U \times [t, t_0 ].
\end{equation}

We also recall the inequality (\ref{eq:Prel-derbound})
\[ \frac{|\nabla K (\cdot, t')|}{K (\cdot, t')} < \sqrt{\frac1{t' - (t_0 -\frac18)}} \sqrt{\log \frac{B}{K(\cdot, t')}}, \]
which holds on all of $\M$ for $t' \in [t_0 - \frac1{8}, t_0 + \frac1{8}]$.
We can rewrite this inequality as
\begin{equation} \label{eq:derivativeboundhk}
 \Bigg| \nabla \sqrt{ \log  \frac{B}{K(\cdot, t')}  } \Bigg| < \frac{1}{2\sqrt{t' - (t_0 - \frac18)}}.
\end{equation}
Integrating this bound at time $t_0$ along $\gamma$ and using the lower bound in (\ref{eq:lowerbound}) yields that
\begin{equation} \label{eq:k>c}
 K(\cdot, t_0) > c \qquad \text{on} \qquad \gamma([0,1])
\end{equation}
for some uniform constant $c > 0$.
Assume now that $\alpha \leq \min \{ \frac18, \frac12 B^{-1} c \}$ and fix some $t \in [t_0 - \alpha, t_0 + \alpha]$.
Then by (\ref{eq:nabkdtk2bound}) and (\ref{eq:k>c})
\[ K(\cdot, t) > c/2 \qquad \text{on} \qquad \gamma([0,1]). \]
Due to the derivative bound (\ref{eq:derivativeboundhk}) on $K (\cdot, t)$, we can choose a uniform $\beta > 0$ such that for all $s \in [0,1]$
\[ K(\cdot, t) >  c/4 \qquad \text{on} \qquad B(\gamma(s), t, \beta). \]

Let now $N \geq 1$ be maximal with the property that there are parameters $0 \leq s_1\leq \ldots \leq s_N \leq 1$ such that the balls $B(\gamma(s_1), t, \beta), \ldots, B(\gamma(s_N), t, \beta)$ are pairwise disjoint.
Then the balls $B(\gamma(s_1), t, 2\beta), \ldots, B(\gamma(s_N), t, 2 \beta)$ cover $\gamma([0,1])$.
We argue that this implies that $d_t (x_0, y_0) \leq N \cdot 4 \beta$:
Consider a graph consisting of vertices labeled by $1, \ldots, N$.
Connect vertex $i$ with vertex $j$ by an edge if $B(\gamma(s_i), t, 2 \beta)$ and $B(\gamma (s_j), t, 2 \beta)$ intersect.
Since $\gamma$ is covered by the balls $B(\gamma(s_i), t, 2 \beta)$, this graph has to be connected.
So it is possible to connect any two vertices by a chain of length $\leq N - 1$.
It follows that $d_t (\gamma(s_i), \gamma(s_j)) < (N-1) \cdot 4 \beta$ for all $i, j \in \{ 1, \ldots, N \}$.
Since $x_0, y_0$ are contained in the union of the $B(\gamma(s_i), t, 2 \beta)$, we obtain $d_t (x_0, y_0) < (N-1) \cdot 2 \beta + 2 \cdot 2 \beta = N \cdot 4 \beta$.

It remains to estimate $N$ from above.
The fact that the $B(\gamma(s_i), t, \beta)$ are pairwise disjoint implies, using (\ref{eq:L1boundk}) and (\ref{eq:kappa1}), that
\[ e^n >  e^{n(t - (t_0 - \frac12))} \geq \int_{\M} K(\cdot, t) dg_t \geq \sum_{i=1}^N \int_{B(\gamma(s_i), t, \beta)} K(\cdot, t) dg_t > N \cdot \kappa_1 \beta^n \cdot c/4 . \]
So it follows that
\[ N < \frac{e^n}{\kappa_1 \beta^{n} \cdot c/4} =: N_0. \]
Putting everything together yields (recall that by assumption $1 \leq d_{t_0} (x_0, y_0 ) \leq 2$).
\[ d_t (x_0, y_0) < 4 N_0 \beta < 8 N_0 \cdot \beta d_{t_0} (x_0, y_0). \]
This proves the upper bound for $\alpha \leq \min \{ \frac18,  \frac12 B^{-1} c, (4N_0)^{-1} \}$ in the case in which $r_0 \leq d_{t_0} (x_0, y_0) \leq 2 r_0$.

\textit{Step 2.}
The upper bound in the general case, $d_{t_0} (x_0, y_0) \geq r_0$, follows now easily:
Let $\gamma : [0,1] \to U$ be a time-$t_0$ minimizing geodesic and choose parameters $0 = s_0 < s_2 < \ldots < s_k = 1$ such that $r_0 \leq d_{t_0} (\gamma(s_{i-1}), \gamma(s_i)) < 2 r_0$.
Then for any $t \in [t_0 - \alpha r_0^2, \min \{ t_0 + \alpha r_0^2, T \})$ we have
\[ d_t (x_0, y_0) \leq \sum_{i=1}^k d_t (\gamma(s_{i-1}), \gamma (s_i)) < \sum_{i=1}^k \alpha^{-1} d_{t_0} (\gamma(s_{i-1}), \gamma (s_i)) = \alpha^{-1} d_{t_0} (x_0, y_0). \]

\textit{Step 3.}
Next, we derive the lower bound on $d_t (x_0, y_0)$ assuming that the upper bound holds for the constant $\alpha$ that we have chosen before.
We claim that for any $t \in [t_0 - \alpha^4 r_0^2, \min \{ t_0 + \alpha^4 r_0^2, T \} )$ we have
\[ d_t (x_0, y_0) > \alpha d_{t_0} (x_0, y_0). \]
The theorem then follows by replacing $\alpha$ by $\alpha^2$.
Assume that our claim was wrong.
By continuity of the distance function $d_t (x_0, y_0)$ in time, we can then find a time $t \in [t_0 - \alpha^4 r_0^2, \min \{ t_0 + \alpha^4 r_0^2, T \} )$ such that
\begin{equation} \label{eq:assumeequality}
 d_t (x_0, y_0) = \alpha d_{t_0} (x_0, y_0).
\end{equation}
We now apply the upper bound for $r_0 \leftarrow \alpha r_0$, $t_0 \leftarrow t$ and $t \leftarrow t_0$ (here and in the rest of the paper ``$\leftarrow$'' denotes replacement).
For this note that $|t - t_0| < \alpha^4 r_0^2 = \alpha^2 (\alpha r_0)^2$.
We obtain that
\[ d_{t_0} (x_0, y_0) < \alpha^{-1} d_{t} (x_0, y_0)) = d_{t_0} (x_0, y_0), \]
which contradicts (\ref{eq:assumeequality}).
This finishes the proof of the theorem.
\end{proof}

We now explain why the statement of Theorem \ref{thdistcon} becomes false if we don't impose the upper bound on the scalar curvature:
Since the Bryant soliton $(\M_{\textnormal{Bry}}, (g_{\textnormal{Bry}, t})_{t \in (-\infty, \infty)})$ arises as a model of certain type-II singularities, it suffices to show that the statement of the Theorem \ref{thdistcon}, without the scalar curvature bound, is false for the Bryant soliton.
Let $x_0 \in \M_{\textnormal{Bry}}$ be the tip of the Bryant soliton and choose an arbitrary point $y_0 \in\M_{\textnormal{Bry}}$, $y_0 \neq x_0$ such that $r_0 := d_0 (x_0, y_0) > 1$.
So $d_t (x_0,y_0) \geq r_0$ for $t \leq 0$ and, since the Ricci curvature is strictly positive around $x_0$, we have $\partial_t d_t (x_0,y_0) < -c$ if $t \leq 0$, for some universal $c > 0$, which does not depend on $r_0$. (Note that $\partial_t d_t (x_0,y_0)$ is equal to the integral of $-\Ric (\gamma', \gamma')$ along a minimizing geodesic $\gamma$ between $x_0, y_0$. The dimension of this integral is the inverse of length.)
So $d_t (x_0,y_0) / d_0 (x_0,y_0) > c |t|$ for $t \leq 1$, giving us a contradiction for large enough $|t|$ and $r_0$.

Next, we present a proof of Corollary \ref{coBDP}.

\begin{proof}[Proof of Corollary \ref{coBDP}]
Set $A = \max \{ \alpha^{-1/2}, \alpha^{-1} \}$, where $\alpha$ is the constant from Theorem \ref{thdistcon}.
In the following we will show the inequality for the case $t > t_0$.
The case $t < t_0$ follows analogously.
Assume that the inequality was wrong for some $r_0 > 0$, $t_0 \in [r_0^2, T)$ and $x_0, y_0 \in \M$ with $d_{t_0} (x_0, y_0) = r_0$.
Then we can choose $t_1 \in [t_0, T)$ such that
\[ d_{t_1} (x_0, y_0) = A \sqrt{r_0^2 + | t_1 - t_0 |} \leq \min \{ R_0^{-1/2}, \sqrt{t_0} \}. \]
Applying Theorem \ref{thdistcon} with $t_0 \leftarrow t_1$, $r_0 \leftarrow A \sqrt{r_0^2 + |t_1 - t_0|}$, $x_0 \leftarrow x_0$, $y_0 \leftarrow y_0$ yields
\begin{equation} \label{eq:distdistortionatotherscale}
 d_t (x_0, y_0) > \alpha A \sqrt{r_0^2 + | t_1 - t_0 |} \qquad \text{for all} \qquad t \in [t_1 - \alpha A^2 (r_0^2 + |t_1 - t_0|), t_1].
\end{equation}
By the choice of $A$ we have
\[ t_1 - \alpha A^2 (r_0^2 + |t_1 - t_0| ) \leq t_1 - (r_0^2 + |t_1 - t_0| ) < t_0. \]
So $t_0$ is included in the time-interval in (\ref{eq:distdistortionatotherscale}).
This implies that
\[ d_{t_0} (x_0, y_0) \geq \sqrt{r_0^2 + |t_1 - t_0|} > r_0, \]
which contradicts our assumptions.
\end{proof}

Using similar techniques, we construct a space-time cutoff function with bounded gradient, time-derivative and Laplacian.

\begin{proof}[Proof of Theorem \ref{Thm:cutoffnoLp}]
As explained in section \ref{sec:Preliminaries}, we may assume by parabolic rescaling that without loss of generality we have $r_0 = 1$ and $R \leq 1$ on $P(x_0, t_0, 1, -\tau)$  and $t_0 \geq 1$.
Moreover, we have $R \geq -n$ on $\M \times [t_0 - \frac12, t_0]$.

Let $0 < \theta < \frac12$ be a constant whose value will be determined in the course of the proof.
The constant $\rho$ will also be chosen in the course of the proof in such a way that $\rho^2 < \tfrac12 \theta$.
Since $0 < \tau \leq \rho^2$, we may assume in the following that we have $\tau < \frac12 \theta$.

Again, by \cite[end of 7.1]{P:1}, we can find a $z \in \M$ such that the reduced distance between $(x_0, t_0)$ and $(z, t_0 - \theta)$ satisfies
\[ l_{(x_0, t_0)} (z, t_0 - \theta ) \leq  \frac{n}2. \]
Consider the forward heat kernel $K(x,t) = K(x,t; z, t_0 - \theta)$ centered at $(z, t_0 - \theta)$, i.e. $\partial_t K = \Delta K$.
As in (\ref{eq:lowerbound}) we have for all $t \in (t_0 - \theta, t_0]$
\[
 K(\cdot, t) < \frac{B_0}{(t-(t_0 - \theta))^{n/2}} \qquad \text{on} \qquad \M
\]
and
\begin{equation} \label{eq:kboundsummarylower}
K(x_0, t_0) \geq \frac1{(4\pi ( t_0 - (t_0 - \theta))^{n/2}} e^{-l_{(x_0, t_0)} (z, t_0 - \theta)} \\
\geq  \frac1{(4\pi  \theta)^{n/2}} e^{-n/2} > B_0^{-1} \theta^{-n/2}.
\end{equation}
As in (\ref{eq:nabkdtk2bound}), we can apply Lemma \ref{leddu}(a) to the time-interval $[t_0 -  \frac34 \theta, t_0]$, and find a uniform constant $B < \infty$ such that
\begin{multline} \label{eq:dkdtk2nd}
 K < B \theta^{-n/2} \quad \text{on} \quad \M \times [t_0 - \tfrac12 \theta, t_0] \qquad \text{and} \\ \qquad |\partial_t K| < B \theta^{-n/2-1} \quad  \text{on} \quad P(x_0, t_0, 1, -\tau) \subset \M \times [t_0 - \tfrac12 \theta, t_0] .
\end{multline}
We now define 
\[ U := \Big\{ x \in \M \;\; : \;\; K(x,t_0) > \tfrac12 B_0^{-1} \theta^{-n/2} \Big\} \subset \M. \]
Recall that by (\ref{eq:kboundsummarylower}), we have $x_0 \in U$.
Let $U_0 \subset U$ be the connected component of $U$ that contains $x_0$.

We now claim that for the right choice of $\theta$ we have $U_0 \subset B(x_0, t_0, 1)$.
Assume not and let $\gamma : [0,1] \to U_0$ be a curve within $U_0$ that connects $x_0$ with a point on $\partial B(x_0, t_0, 1) \cap U_0$.
Let $N \geq 1$ be maximal with the property that we can find parameters $s_1, \ldots, s_N \in [0,1]$ such that the balls $B(\gamma(s_1), t_0, \sqrt{\theta}), \ldots, B(\gamma(s_N), t_0, \sqrt{\theta})$ are pairwise disjoint.
This implies that the balls of twice the radius, $B(\gamma(s_1), t_0, 2\sqrt{\theta}), \ldots, B(\gamma(s_N), t_0, 2\sqrt{\theta})$, cover $\gamma([0,1])$.
We now argue as in the proof of Theorem \ref{thdistcon}:
Consider a graph on $N$ vertices, $1, \ldots, N$, and connect vertices $i,j$ with an edge if $B(\gamma(s_i), t_0, 2 \sqrt{\theta})$ and $B(\gamma(s_j), t_0, 2 \sqrt{\theta})$ intersect.
This graph is connected and hence it is possible to connect the vertex corresponding to the ball that contains $x_0$ with the vertex corresponding to the ball that intersects $\partial B(x_0, t_0, 1)$ by a chain of length $\leq N-1$.
So we have
\begin{equation} \label{eq:lowerboundonN}
  2 \sqrt{\theta} + 4 \sqrt{\theta} (N-1) + 2 \sqrt{\theta} > 1 \qquad \Longrightarrow \qquad N > \frac{1}{4 \sqrt{\theta}}.
\end{equation}

On the other hand, we can use (\ref{eq:derivativeboundhk}) at time $t_0$, with $\frac12$ replaced by $\theta$ and $B$ replaced by $B \theta^{-n/2}$
\[  \Bigg| \nabla \sqrt{ \log  \frac{B \theta^{-n/2}}{K(\cdot, t_0)}  } \Bigg| < \frac{1}{2\sqrt{\theta}}. \]
So for each $y \in U$ and $z \in B(y, t_0, \sqrt{\theta})$
\[ \sqrt{ \log \bigg( \frac{B \theta^{-n/2}}{K(z, t_0)} \bigg) }  <  \frac12 + \sqrt{\log \bigg( \frac{B \theta^{-n/2}}{K(y, t_0)} \Big) } < \frac12 + \sqrt{\log \bigg( \frac{B \theta^{-n/2}}{B_0^{-1} \theta^{-n/2}} \Big) } = \frac12 + \sqrt{\log (B B_0) }, \]
which implies
\[ K(z,t_0)  > \frac{c}{\theta^{n/2}}, \]
for some uniform $c > 0$.
Therefore, using (\ref{eq:L1boundk}) and (\ref{eq:lowerboundonN})
\[ e^n > \int_{\M} K(\cdot, t_0) dg_{t_0} > \sum_{i =1}^N \int_{B(\gamma(s_i), t_0, \sqrt{\theta})} K(\cdot, t_0) dg_{t_0} > N \cdot \frac{c}{\theta^{n/2}} \cdot \kappa_1 \theta^{n/2} = N c  > \frac{c }{4 \sqrt{\theta}}. \]
So for $\theta < \frac1{16} e^{-2n} c^2$ we obtain a contradiction.
This implies that $U_0 \subset B(x_0, t_0, 1)$ if $\theta > 0$ is chosen small, but universally.
We will fix $\theta$ for the rest of the proof.

Assume now that $0 < \tau < 0.1 B^{-1} B_0^{-1} \theta < \frac12 \theta$.
Integrating (\ref{eq:dkdtk2nd}), we find that for all $(x,t) \in U_0 \times [t_0 - \tau, t_0]$ we have
\[ K(x, t_0 ) - 0.1 B_0^{-1} \theta^{-n/2} < K(x,t) < K(x, t_0 ) + 0.1 B_0^{-1} \theta^{-n/2} . \]
In particular, for $x \in \partial U_0 \times [t_0 - \tau, t_0]$ we have
\[ K(x,t) < 0.6 B_0^{-1} \theta^{-n/2}. \]
So if we define $\psi \in C^0 (U_0 \times [t_0 - \tau, t_0])$ by
\[ \psi (x,t) := \max \big\{ K(x,t) - 0.6 B_0^{-1} \theta^{-n/2}, 0 \big\}, \]
then $\supp \psi (\cdot, t)$ is compactly contained in $U_0$ for all $t \in [t_0 - \tau, t_0]$.
So $\psi$ can be continued to a function $\widehat{\psi} \in C^0 (\M \times [t_0 - \tau, t_0])$ by setting $\widehat{\psi} (x,t) := 0$ for $x \in \M \setminus U_0$.

Let us summarize the properties of $\widehat{\psi}$.
First, we have
\[ 0 \leq \widehat{\psi} < B \theta^{-n/2}. \]
on $\M \times [t_0 - \tau, t_0]$.
We also have
\[ |\nabla \widehat{\psi} | < B \theta^{-n/2 - 1/2}, \qquad |\partial_t \widehat{\psi}| + |\Delta \widehat{\psi} | < 2 B \theta^{-n/2 - 1} \]
whenever $\widehat{\psi} > 0$.
Lastly, we have
\[ \widehat{\psi} (x_0, t_0) > 0.4 B_0^{-1} \theta^{-n/2}. \]
So if we set $\widetilde{\phi} := \frac1{16} B^{-1} \tau^{n/2 +1}  \widehat{\psi}$, then
\[ 0 \leq \widetilde{\phi} < 1 \qquad \text{and} \qquad \widetilde{\phi} (x_0, t_0 ) > c  \]
for some uniform constant $c > 0$ and
\[  |\nabla \widetilde{\phi} | < \tfrac14, \qquad |\partial_t \widetilde{\phi} | + | \Delta \widetilde{\phi} | < \tfrac18, \]
whenever $\widetilde{\phi} > 0$.
So if we set $\phi := \widetilde{\phi}^2$, then assertions (a), (c) and (d) hold for an appropriate $\rho$.
Assertion (b) follows from the fact that $\phi (x_0, t_0) > c^2$ and assertion (d).
Note that $\phi$ is twice differentiable and can be mollified to become smooth while preserving assertions (a)--(d).
\end{proof}

As an application of Theorem \ref{Thm:cutoffnoLp}, we construct a cutoff function with bounded time derivative, which vanishes outside of a ball at one time level and whose time-derivative is positive and bounded away from zero.
This function will be used as a barrier in the proof of Theorem \ref{Thm:backwpseudoloc} in section \ref{sec:pseudoloc}.

\begin{lemma} \label{Lem:secondcutoff} 
Let $(\M^n, (g_t)_{t \in [0,T)})$, $T < \infty$ be a Ricci flow on a compact $n$-manifold.
Then there is a constant $\rho > 0$, which only depends on $\nu[g_0, 2T], n$, such that the following holds:

Let $(x_0, t_0) \in \M \times [0,T)$ and $0 < r_0 \leq  \sqrt{t_0}$ and $0 < \tau \leq 2 \rho^2 r_0^2$.
Assume that $R \leq r_0^{-2}$ on $P(x_0, t_0, r_0, -\tau)$.
Then there exists a cutoff function $\phi \in C^{\infty}({\M} \times [t_0 - \tau, t_0])$
with the following properties:
\begin{enumerate}[label = (\alph*)]
\item $0 \le \phi < r_0$ everywhere.
\item $\phi > \rho r_0$ on $P(x_0, t_0, \rho r_0, -\tau)$.
\item $\phi (\cdot, t_0) = 0$ on $(\M \setminus B(x_0, t_0, r_0)) \times [t_0 - \tau, t_0]$.
\item $\phi (\cdot, t)$ is $1$-Lipschitz for each $t \in [t_0 - \tau, t_0]$.
\item $\p_t \phi > r^{-1}_0$ on $P(x_0, t_0, \rho r_0, -\tau)$.
\end{enumerate}
\end{lemma}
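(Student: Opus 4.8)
The plan is to build $\phi$ by multiplying the space-time cutoff function of Theorem~\ref{Thm:cutoffnoLp} by a steeply increasing function of time alone. The key observation is that the time interval $[t_0-\tau,t_0]$ is very short, $\tau\le 2\rho^2 r_0^2$, so that even a modest growth of $\phi$ over this interval — say from about one half to about three quarters of its maximal size — already produces a time-derivative of order $r_0^{-1}$; and since the growth factor stays bounded, $\phi$ continues to vanish off the ball, to stay below $r_0$, and to remain $1$-Lipschitz.

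First I would reduce to the normalized situation by parabolic rescaling, as in Section~\ref{sec:Preliminaries}: assume $r_0=1$, $t_0\ge 1$, and $R\le 1$ on $P(x_0,t_0,1,-\tau)$. Let $\rho_1>0$ be the constant of Theorem~\ref{Thm:cutoffnoLp}, which we may assume to satisfy $\rho_1\le 1$, and set $\rho:=\tfrac{1}{4}\rho_1$. Then $2\rho^2=\tfrac{1}{8}\rho_1^2\le\rho_1^2$, so the hypothesis $\tau\le 2\rho^2$ lets us apply Theorem~\ref{Thm:cutoffnoLp} on $P(x_0,t_0,1,-\tau)$. This produces a function $\eta\in C^\infty(\M\times[t_0-\tau,t_0])$ with $0\le\eta<1$, with $\eta>\rho_1$ on $P(x_0,t_0,\rho_1,-\tau)$, with $\eta=0$ on $(\M\setminus B(x_0,t_0,1))\times[t_0-\tau,t_0]$, and with $|\nabla\eta|<1$ and $|\partial_t\eta|+|\Delta\eta|<1$ everywhere.

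Next I would set $f(t):=\tfrac{1}{2}+\tfrac{t-(t_0-\tau)}{4\tau}$, so that $f$ is smooth and increasing with $\tfrac{1}{2}\le f\le\tfrac{3}{4}$ and $f'=\tfrac{1}{4\tau}$, and define $\phi:=\eta f$ on $\M\times[t_0-\tau,t_0]$. Then properties (a)--(d) are immediate: $0\le\phi=\eta f<1=r_0$ gives (a); the vanishing of $\eta$ off the ball gives (c); $|\nabla\phi|=f|\nabla\eta|<\tfrac{3}{4}<1$ gives (d); and since $P(x_0,t_0,\rho,-\tau)\subset P(x_0,t_0,\rho_1,-\tau)$ we have $\phi=\eta f>\rho_1\cdot\tfrac{1}{2}\ge\rho$ there, which is (b). For the one new ingredient, (e), on $P(x_0,t_0,\rho_1,-\tau)$ I would estimate
\[
\partial_t\phi=f'\,\eta+f\,\partial_t\eta>\frac{\rho_1}{4\tau}-1,
\]
using $f'>0$, $\eta>\rho_1$, $0<f\le 1$ and $|\partial_t\eta|<1$; since $\tau\le 2\rho^2=\tfrac{1}{8}\rho_1^2\le\tfrac{1}{8}\rho_1$ (here $\rho_1\le 1$), the right-hand side is at least $2-1=1=r_0^{-1}$, hence $\partial_t\phi>r_0^{-1}$ there. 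Undoing the parabolic rescaling yields the assertion for arbitrary $r_0$, after the routine check (cf. Section~\ref{sec:Preliminaries}) that $\partial_t\phi$ and $|\nabla\phi|$ scale correctly; note finally that $\phi=\eta f$ is smooth because $\eta$ and $f$ are.

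The only point requiring genuine care is the arrangement of constants that makes the steep slope $f'\sim\tau^{-1}$ beat the competing term $f\,\partial_t\eta$ — which is merely bounded and may be negative — on the inner neighborhood where $\eta\ge\rho_1$. That domination is precisely what the smallness hypothesis $\tau\le 2\rho^2 r_0^2$, combined with the choice of $\rho$ small relative to the Theorem~\ref{Thm:cutoffnoLp}-constant $\rho_1$, is designed to furnish; everything else is a direct consequence of the properties of $\eta$.
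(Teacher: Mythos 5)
Your proof is correct, and it takes a genuinely different route from the paper's at the one step that matters. Both constructions start from the cutoff $\widetilde{\phi}$ (your $\eta$) of Theorem~\ref{Thm:cutoffnoLp} and boost it in time to achieve the derivative bound (e). The paper boosts \emph{additively}: it sets $\phi := r_0\max\{\widetilde{\phi} - 2r_0^{-2}(t_0-t),\,0\}$, so that the added slope $2r_0^{-2}$ outruns $|\partial_t\widetilde{\phi}|<r_0^{-2}$, and then truncates by $\max\{\,\cdot\,,0\}$ to restore non-negativity. You boost \emph{multiplicatively}: you set $\phi := \eta\cdot f(t)$ with $f$ an affine function of slope of order $\tau^{-1}$, and the smallness hypothesis $\tau\le 2\rho^2 r_0^2$ together with the calibration $\rho=\tfrac14\rho_1$ makes $f'\eta$ dominate $f\,\partial_t\eta$. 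Your version has two small but real advantages: the product of smooth functions is automatically $C^\infty$, whereas the paper's $\max$-truncation yields only a Lipschitz function that would need an extra mollification step to honestly satisfy $\phi\in C^\infty$; and you are explicit about how $\rho$ must be chosen relative to the constant of Theorem~\ref{Thm:cutoffnoLp}, a point the paper leaves implicit. The remaining verifications --- (a) through (d), the inclusion $P(x_0,t_0,\rho,-\tau)\subset P(x_0,t_0,\rho_1,-\tau)$, and the parabolic rescaling --- are all sound, so this is a complete and in one respect cleaner proof.
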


\begin{proof}
Let $\widetilde{\phi} \in C^\infty (\M \times [t_0 - \tau, t_0])$ be the cutoff function from Theorem \ref{Thm:cutoffnoLp} and set
\[ \phi (x,t) := r_0 \max \big\{  \widetilde{\phi} (x,t) - 2 r_0^{-2} (t_0 - t) , 0 \big\}. \]
Assertions (a), (c), (d) are obviously true.
Let now $(x,t) \in P(x_0, t_0, \rho r_0, - \tau)$.
Then, assuming $\rho < 0.1$, we have
\[ \phi (x,t) > r_0 ( \rho - 2\rho^2 ) > \tfrac12 \rho r_0 \]
and
\[ \partial_t \phi (x,t) > r_0 ( - r_0^{-2} + 2 r_0^{-2} ) = r_0^{-1}. \]
So assertions (b) and (e) hold after replacing $\rho$ by $\frac12 \rho$.
\end{proof}

\section{Mean value inequalities for the heat and conjugate heat equation} \label{sec:meanvalue}
In this section we present some consequences of the existence of the cutoff function from Theorem \ref{Thm:cutoffnoLp}.
Our first result, Lemma \ref{leCHEmv}, will be a mean value inequality for solutions of the conjugate heat equation. 
This mean value inequality will be used in the subsequent section to deduce a Gaussian upper bound for heat kernels.
Similar mean value inequalities have been proven under various extra conditions, such as boundedness
of $\partial_t g$.
Here we just assume a global bound on the scalar curvature.
Note that the results of this section can be generalized easily to the setting in which the scalar curvature is only locally bounded.

\begin{lemma} \lab{lemoserCHE}
Let $(\M^n, (g_t)_{t \in [0,T)})$, $T < \infty$ be a Ricci flow on a compact manifold that satisfies $R \leq R_0 < \infty$ everywhere and let $p > 2$.
Then there are constants $0 < \alpha < 1$, $A < \infty$ that only depend on $p, \nu[g_0, 2T], n$ such that the following holds:

Let $(x_0, t_0) \in \M \times [0,t)$ and $0 < r_0 \leq \min \{ R_0^{-1/2}, \sqrt{t_0} \}$ and let $u \in C^\infty ( \M \times [t_0, t_0 + r_0^2])$ be a solution of the conjugate heat equation (\ref{che}).
Then
\[
\bigg( \int_{Q^+ (x_0, t_0, \a r_0)} u^p dg_t dt \bigg)^{2/p}
\le \frac{A}{r_0^{(n+2) (p-2)/p}} \int _{Q^+ (x_0, t_0,  r_0 )} u^2 dg_tdt.
\]
\end{lemma}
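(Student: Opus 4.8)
The plan is to run a Moser iteration for the conjugate heat equation, using the space-time cutoff function of Theorem \ref{Thm:cutoffnoLp} to localize and the non-inflating volume bound \eqref{eq:kappa2} together with a Sobolev inequality coming from Perelman's $\nu$-functional to gain integrability. First I would fix a reference cutoff: by Theorem \ref{Thm:cutoffnoLp} applied at $(x_0,t_0)$ with radius $r_0$ (after checking $r_0 \le \sqrt{t_0}$ and using the hypothesis $r_0 \le R_0^{-1/2}$, so that $R \le r_0^{-2}$ on the relevant parabolic neighborhood), there is $\phi \in C^\infty$ supported in $B(x_0,t_0,r_0)$, equal to a definite constant near $P(x_0,t_0,\rho r_0, \cdot)$, with $|\nabla\phi| < r_0^{-1}$ and $|\partial_t\phi| + |\Delta\phi| < r_0^{-2}$; here I would use the \emph{forward} version of the cutoff (the statement in the excerpt is phrased for backward parabolic neighborhoods $P(x_0,t_0,r_0,-\tau)$, but the construction is time-symmetric, so I would invoke the analogous forward statement, or equivalently reverse time to reduce to it). Since $u$ solves the conjugate heat equation $-\partial_t u - \Delta u + Ru = 0$, which runs backward in time, pairing $u$ against $\phi^2 u^{p-1}$ and integrating by parts in space is the natural energy step.

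The key steps, in order: (1) For an exponent $q \ge 2$ and the cutoff $\phi$, multiply the conjugate heat equation by $\phi^2 u^{q-1}$ and integrate over $\M \times [t_0, t]$. Integrating by parts in space and carefully handling $\partial_t(\phi^2 u^q)$ — remembering $\partial_t dg_t = -R\, dg_t$ and the sign of the time-derivative in \eqref{che} — yields an estimate of the form $\sup_{t} \int \phi^2 u^q dg_t + \int\!\!\int \phi^2 |\nabla(u^{q/2})|^2 \lesssim q \int\!\!\int (|\nabla\phi|^2 + |\partial_t\phi| + |\Delta\phi| + R\phi^2) u^q$, where the scalar curvature term is absorbed using $R \le r_0^{-2}$ and the cutoff bounds contribute only factors of $r_0^{-2}$. (2) Feed this into the $L^2$–Sobolev inequality on $(\M, g_t)$: the lower bound on $\nu[g_0,2T]$ gives a uniform Sobolev/log-Sobolev inequality along the flow, and combined with the non-inflating bound $|B(x_0,t_0,r_0)|_{t_0} < \kappa_2 r_0^n$ (valid since $R \le r_0^{-2} \le \alpha/(t_0-t)$ is not literally the hypothesis of \eqref{eq:kappa2}, so I would instead use the non-collapsing/non-inflating pair together with the bounded-geometry-free Sobolev constant) this converts the energy estimate into a reverse-Hölder/$L^p$-gain inequality: $\big(\int\!\!\int_{Q_{r'}} u^{q\chi}\big)^{1/\chi} \lesssim \frac{C q}{(r''-r')^2} \int\!\!\int_{Q_{r''}} u^q$ for $\chi = \frac{n+2}{n} > 1$ and nested forward parabolic cubes $Q_{r'} \subset Q_{r''}$. (3) Iterate with $q = q_k = 2\chi^k$ over a sequence of shrinking radii $r_k$ from $r_0$ down to some $\alpha r_0$, track the product of the constants (a convergent product because of the $\chi^k$ decay in the exponents balanced against the geometric series in the radii), and reach the $L^\infty$ bound $\sup_{Q^+(x_0,t_0,\alpha r_0)} u^2 \lesssim r_0^{-(n+2)} \int\!\!\int_{Q^+(x_0,t_0,r_0)} u^2$. (4) Finally, to get the stated $L^p$-on-the-left form rather than $L^\infty$, simply estimate $\big(\int\!\!\int_{Q^+(x_0,t_0,\alpha r_0)} u^p\big)^{2/p} \le \big(\sup_{Q^+(x_0,t_0,\alpha r_0)} u^2\big) \cdot |Q^+(x_0,t_0,\alpha r_0)|^{2/p}$ and bound the volume of the parabolic cube by $C r_0^{n+2}$ using \eqref{eq:Prel-voldistortion} and the non-collapsing bound, which produces exactly the factor $r_0^{-(n+2)(p-2)/p}$ after combining exponents. (One must keep $u \ge 0$; if $u$ changes sign, replace $u$ by $|u|$, which is a subsolution in the appropriate sense, or apply the argument to $u^+$ and $u^-$ separately — though for the intended application $u$ will be a conjugate heat kernel and hence positive.)

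The main obstacle I anticipate is \emph{the absence of a pointwise bound on $\partial_t g = -2\Ric$}: this is precisely why the usual parabolic Moser iteration does not apply verbatim, since the standard arguments use $|\Ric|$-bounds to control the distortion of balls and of the energy terms arising from differentiating the metric. Here the two substitutes are (i) the space-time cutoff of Theorem \ref{Thm:cutoffnoLp}, whose existence was itself engineered to bypass exactly this difficulty — its bounded Laplacian and time-derivative replace the role normally played by $|\Ric| \le C$ — and (ii) the distance distortion estimate of Theorem \ref{thdistcon}, which one needs to ensure that the nested forward parabolic cubes $Q^+(x_0,t_0,r_k)$ are genuinely nested and that their spatial slices do not drift, so that the iteration takes place on a controlled family of domains; the scalar curvature bound $R \le r_0^{-2}$ at scale $r_0 \le R_0^{-1/2}$ is what makes Theorem \ref{thdistcon} applicable at every stage. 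A secondary technical point is bookkeeping the powers of $r_0$ through the iteration so that the final constant $A$ depends only on $p$, $n$ and $\nu[g_0,2T]$ and not on $r_0$ or $t_0$; this follows because after the initial parabolic rescaling to $r_0 = 1$ (as in the other proofs in this section) all the geometric inputs — the Sobolev constant, $\kappa_1$, $\kappa_2$, and the constant $\alpha$ from Theorem \ref{thdistcon} — depend only on $\nu[g_0,2T]$ and $n$.
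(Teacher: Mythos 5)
Your high-level plan (energy estimate with a well-behaved cutoff, Sobolev inequality from the $\nu$-functional, Moser iteration) matches the paper's approach, but there are two genuine gaps.

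\textbf{First, the cutoff.} You propose to apply Theorem~\ref{Thm:cutoffnoLp} at $(x_0,t_0)$ in a \emph{forward} version, ``or equivalently reverse time.'' Neither works. The construction of the cutoff in Theorem~\ref{Thm:cutoffnoLp} uses Perelman's reduced distance and the forward heat kernel launched from a point in the past, which have a definite time orientation; and one cannot reverse time in a Ricci flow (the equation $\partial_t g = -2\Ric$ is not time-symmetric, and the backward flow is not a Ricci flow). The paper's actual device is to apply the \emph{backward} cutoff theorem at a \emph{later} time level $t_0 + 2\gamma^2$, at a slightly larger spatial radius $\beta$, and then use the distance distortion estimate (Theorem~\ref{thdistcon}) to arrange that the backward parabolic neighborhood $P(x_0,\,t_0+2\gamma^2,\,\beta,\,-2\gamma^2)$ both sits inside $Q^+(x_0,t_0,1)$ and contains $Q^+(x_0,t_0,\gamma)$ where the cutoff is bounded below. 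An extra temporal cutoff $\eta$ with $\eta(t_0+2\gamma^2)=0$ then kills the boundary term at the top time. Without this arrangement, your integration by parts in step (1) produces an uncontrolled boundary term at $t = t_0 + r_0^2$.

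\textbf{Second, the iteration endpoint.} Your steps (3)--(4) propose to iterate all the way to an $L^\infty$ bound on $Q^+(x_0,t_0,\alpha r_0)$ and then descend to $L^p$ by multiplying by the volume. This fails with the tools available here, and the paper explicitly says so in the text between Lemma~\ref{lemoserCHE} and Lemma~\ref{leCHEmv}: because Theorem~\ref{Thm:cutoffnoLp} only guarantees the cutoff is bounded below on the \emph{much} smaller inner scale $\rho r_0$ (with $\rho$ a fixed small universal constant, not an adjustable parameter close to $1$), the inner cubes shrink geometrically, $Q^+(x_0,t_0,\gamma^k r_0)\to\{x_0\}\times\{t_0\}$, and the iteration does not stabilize on a cube of positive measure. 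In standard Moser iteration one remedies this by choosing radii $r_k = (\tfrac12 + 2^{-k-1})r_0$ and paying a polynomial price in $k$, but here the cutoff's inner-to-outer ratio is fixed at $\rho$, so that trick is unavailable. The paper therefore stops after finitely many steps: each application of the reverse-H\"older inequality raises the exponent from $p$ to $p(1+\tfrac{2}{n})$, so after $k$ steps one has the stated $L^p$ bound for $p = 2(1+\tfrac{2}{n})^k$ on $Q^+(x_0,t_0,\gamma^k r_0)$, with a constant $A(k)$, and intermediate $p$ are handled by interpolation. The $L^\infty$ (mean-value) bound is Lemma~\ref{leCHEmv}, and it is obtained by a completely different argument --- the heat kernel representation formula plus the $L^p$ bound from Lemma~\ref{lemoserCHE} --- precisely because the iteration route you sketch does not reach $L^\infty$.

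The remaining steps of your proposal (the energy estimate via the subsolution inequality $\Delta u^{p/2} - \tfrac{p}{2}R u^{p/2} + \partial_t u^{p/2} \ge 0$, the uniform Sobolev inequality from \cite{Z07:1}, and the recognition that the missing $\Ric$ bound is the central obstacle and that Theorems~\ref{thdistcon} and~\ref{Thm:cutoffnoLp} are the replacements) are all in line with the paper.
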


\begin{proof}
We may assume by parabolic rescaling that $r_0 = 1$ and hence $|R|  \leq n$ on $\M \times [t_0 - \tfrac12, t_0]$ (see section \ref{sec:Preliminaries} for more details).
Let $\rho > 0$ be the constant from Theorem \ref{Thm:cutoffnoLp}.
By the distance distortion estimate of Theorem \ref{thdistcon}, we can find some uniform constant $0 < \beta < 1$ such that
\[  B(x_0, t_1 , \beta) \subset B(x_0, t_2, 1) \qquad \text{for all} \qquad t_1, t_2 \in [t_0, t_0 + \beta^2 \rho^2]. \]
Next, we can find some uniform constant $0 < \gamma < \beta \rho$ such that
\[  B(x_0, t_1 , \gamma) \subset B(x_0, t_2, \beta \rho) \qquad \text{for all} \qquad t_1, t_2 \in [t_0, t_0 + 2 \gamma^2]. \]

Let $\phi \in C^\infty (\M \times [t_0, t_0 + 2\gamma^2])$ be the cutoff function from Theorem \ref{Thm:cutoffnoLp} applied at scale $\beta$ and at time $t_0 + 2 \gamma^2$.
Recall that for some uniform constant $C_0 < \infty$:
\begin{enumerate}[label=(\roman*)]
\item $0 \leq \phi < 1$, $|\nabla \phi | < \beta^{-1} \leq C_0$ and $|\partial_t \phi | < \beta^{-2} \leq C_0$ on $\M \times [t_0, t_0 + 2 \gamma^2]$.
\item $\phi = 0$ on $(\M \setminus B(x_0, t_0 + 2 \gamma^2, \beta)) \times [t_0, t_0 + 2 \gamma^2 ] \supset \M \times [t_0, t_0 + 2 \gamma^2] \setminus Q^+ (x_0, t_0, 1)$.
\item $\phi > \rho$ on $P(x_0, t_0 + 2 \gamma^2, \beta \rho, -2 \gamma^2) \supset Q^+ (x_0, t_0, \gamma  )$.
\end{enumerate}
Let $\eta \in C^\infty ([t_0, t_0 + 2\gamma^2])$ be a cutoff function with $\eta \equiv 1$ on $[t_0, t_0 + \gamma^2]$, $\eta (t_0 + 2 \gamma^2) = 0$, $\eta \leq 1$ and $|\eta' | < C_1$ for some uniform constant $C_1 < \infty$.
Then the function
\[ \psi (x,t) := \eta (t) \phi (x,t) \]
still satisfies the properties (i), (ii) above, possibly after adjusting $C_0$ and we have $\psi > \rho$ on $Q^+ (x_0, t_0, \gamma)$.

The remainder of our proof resembles the standard Moser iteration except that we need to use
the improved cut off function $\psi$.
We first note that
\begin{equation} \label{eq:backwardtothep}
 \Delta u^{p/2} - \frac{p}2 R u^{p/2} + \partial_t u^{p/2} \geq 0.
\end{equation}
So integrating $u^p$ against $\psi^2$ at a some time $t \in [t_0, t_0 + 2 \gamma^2]$ yields, for a generic constant $C$ and for $B := B(x_0, t_0 + 2 \gamma^2, \beta)$
\begin{alignat*}{1}
 \frac{d}{dt} \int_{\M} u^p \psi^2 dg_t &= 2 \int_{\M} (\partial_t u^{p/2}) u^{p/2} \psi^2 dg_t + 2 \int_{\M} u^p (\partial_t \psi) \psi dg_t -  \int_{\M} u^p \psi^2 R dg_t \displaybreak[1] \\
 &\geq 2 \int_{\M} \Big(- \Delta u^{p/2} + \frac{p}2 R u^{p/2} \Big) u^{p/2} \psi^2 dg_t - C \int_{B} u^p dg_t \displaybreak[1] \\
 &\geq 2 \int_{\M} \big(\nabla u^{p/2} \big) \cdot \big( \nabla (u^{p/2} \psi) \psi + u^{p/2} \psi \nabla \psi \big) dg_t - C  \int_{B} u^p dg_t \displaybreak[1] \\
 &= 2 \int_{\M} \big(\nabla (u^{p/2} \psi) - u^{p/2} \nabla \psi \big) \cdot \big( \nabla (u^{p/2} \psi)  + u^{p/2} \nabla \psi \big) dg_t - C  \int_{B} u^p dg_t \displaybreak[1] \\
  &= 2 \int_{\M} \big|\nabla (u^{p/2} \psi ) \big|^2 dg_t - 2 \int_{\M}  u^{p} |\nabla \psi|^2 dg_t - C \int_{B} u^p dg_t \displaybreak[1] \\
  &\geq  2 \int_{\M} \big|\nabla (u^{p/2} \psi ) \big|^2 dg_t - C \int_{B} u^p dg_t .
 \end{alignat*}
By \cite{Z07:1} (see also \cite{Ye:1}), there is a Sobolev inequality of the form
\begin{equation} \label{eq:Sobolevcite}
 \bigg( \int_{\M} v^{\frac{2n}{n-2}} dg_t \bigg)^{\frac{n-2}{2n}} \leq C_S \bigg( \int_{\M} |\nabla_t v|^2 dg_t \bigg)^{1/2} + C_S \bigg( \int_{\M} v^2 dg_t \bigg)^{1/2},
\end{equation}
for any $v \in C^1 (\M)$ and $t \in [0,T)$, where $C_S < \infty$ only depends on $\nu[g_0, 2T]$ and $n$.
So we obtain
\[ \frac{d}{dt} \int_{\M} u^p \psi^2 dg_t \geq C^{-1} \bigg( \int_{\M} u^{\frac{p n}{n-2}} \psi^2 dg_t \bigg)^{\frac{n-2}{n}} - C \int_{B} u^p dg_t.  \]
Integrating this inequality from $t_1 \in [t_0, t_0 + 2 \gamma^2]$ to $t_0 + 2 \gamma^2$ and observing that the integral on the left-hand side vanishes for $t = t_0 + 2 \gamma^2$ yields
\begin{multline*}
0 - \int_{\M} u^p(\cdot, t_1) \psi^2(\cdot, t_1) dg_{t_1}  \\
  \geq C^{-1} \int_{t_1}^{t_0 + 2\gamma^2}  \bigg( \int_{\M} u^{\frac{pn}{n-2}} \psi^2 dg_t \bigg)^{\frac{n-2}n} dt - C \int_{t_1}^{t_0 +  2 \gamma^2} \int_B u^p dg_t dt.
\end{multline*}
Letting $t_1$ vary over $[t_0, t_0 + 2\gamma^2]$, we obtain
\begin{multline*}
 \sup_{t \in [t_0, t_0 + 2\gamma^2]} \int u^p (\cdot, t) \psi^2 ( \cdot, t) dg_t +  \int_{t_0}^{t_0 + 2\gamma^2}  \bigg( \int_{\M} u^{\frac{pn}{n-2}} \psi^2 dg_t \bigg)^{\frac{n-2}n} dt \\ 
 \leq C  \int_{t_0}^{t_0+ 2\gamma^2} \int_Bu^p dg_t dt.
\end{multline*}
So by H\"older's inequality
\begin{multline*}
 \int_{t_0}^{t_0 + 2 \gamma^2}  \int_{\M} u^{p( 1 + \frac{2}n)} \psi^2 dg_t dt 
= \int_{t_0}^{t_0 + 2 \gamma^2}  \int_{\M} u^{p} \psi^{\frac{2(n-2)}{n}} \cdot u^{\frac{2}n p} \psi^{\frac{4}n} dg_t dt \\
\leq  \int_{t_0}^{t_0 + 2 \gamma^2} \bigg( \int_{\M} u^{\frac{pn}{n-2}} \psi^2 dg_t \bigg)^{\frac{n-2}{n}} \bigg( \int_{\M} u^p \psi^2 dg_t \bigg)^{\frac{2}n} dt \\
\leq  \bigg( \int_{t_0}^{t_0 + 2 \gamma^2} \bigg( \int_{\M} u^{\frac{pn}{n-2}} \psi^2 dg_t \bigg)^{\frac{n-2}{n}} dt \bigg) \cdot \bigg(C  \int_{t_0}^{t_0+ 2\gamma^2} \int_Bu^p dg_t dt \bigg)^{\frac{2}n}  \\
 \leq  \bigg( C \int_{t_0}^{t_0+ 2 \gamma^2} \int_B u^p dg_t dt \bigg)^{1 + \frac{2}n}.
\end{multline*}
Using the fact that $\psi > \rho$ on $Q^+(x_0, t_0, \gamma)$, it follows that
\[ \bigg( \int_{Q^+ (x_0, t_0, \gamma )} u^{p( 1 + \frac{2}n)} dg_t dt \bigg)^{1 / p( 1 + \frac{2}n)} \leq C \bigg( \int_{Q^+ (x_0, t_0, 1)} u^p dg_t dt \bigg)^{1/p}. \]
Applying this inequality successively at scales $\gamma^k$, yields the desired inequality for $p = 2(1 + \frac{2}n )^k$ and $\alpha = \gamma^k$ and some $A$, which may depend on $k$.
If $p$ takes other values, we can use interpolation to prove the result.
\end{proof}

Next we prove a mean value inequality for the conjugate heat
equation. At  first glance, it may seem that the Moser iteration in the proof of the previous lemma
will lead to the mean value inequality. However, since the size of the cube
$Q^+ (x_0, t_0, \gamma^k )$ shrinks to $0$ when $k \to \infty$, we have to take a different path.

\begin{lemma} \label{leCHEmv}
Let $(\M^n, (g_t)_{t \in [0,T)})$, $T < \infty$ be a Ricci flow on a compact manifold that satisfies $R \leq R_0 < \infty$ everywhere.
Then there are constants $\beta > 0$, $C < \infty$, which only depend on $\nu[g_0, 2T], n$, such that the following holds:

Let $(x_0, t_0) \in {\M} \times [0, T)$ and $0 < r_0 \leq \min \{ R_0^{-1/2}, \sqrt{t_0} \}$ and let $u \in C^\infty ( \M \times [t_0, t_0 + r_0^2])$ be a solution of the conjugate heat equation (\ref{che}).
Then
\[
\sup_{Q^+(x_0, t_0, \beta r_0)} u^2 \le \frac{C}{r_0^{n+2}}
\int_{Q^+(x_0, t_0, r_0)} u^2 dg_t dt.
\]
\end{lemma}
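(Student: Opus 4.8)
The plan is to bootstrap from Lemma~\ref{lemoserCHE}, which gives an $L^p$–$L^2$ estimate for solutions of the conjugate heat equation, to a full $L^\infty$–$L^2$ bound by running a Moser iteration on a decreasing sequence of \emph{forward} parabolic cubes whose radii do \emph{not} shrink to zero. By parabolic rescaling we may assume $r_0 = 1$, so that $|R| \le n$ on $\M \times [t_0 - \tfrac12, t_0]$, and it suffices to bound $\sup_{Q^+(x_0,t_0,\beta)} u^2$ by $C \int_{Q^+(x_0,t_0,1)} u^2\, dg_t\, dt$ for suitable uniform $\beta, C$. Note also that we may assume $u \ge 0$: the conjugate heat equation does not preserve positivity directly, but $|u|$ is a subsolution in the appropriate sense (or one works with $u^2$ throughout, as in the proof of Lemma~\ref{lemoserCHE}, which only ever used $u^2$ and $u^p$).

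The key step is the iteration itself. Fix radii $r_k = \beta + 2^{-k}(1-\beta)$, decreasing from $1$ to $\beta$, and corresponding cubes $Q_k := Q^+(x_0, t_0, r_k)$. Writing $p_0 = 2$ and $p_{k+1} = p_k(1 + \tfrac2n) = 2(1+\tfrac2n)^{k+1}$, I would show
\[
\Big( \int_{Q_{k+1}} u^{p_{k+1}}\, dg_t\, dt \Big)^{1/p_{k+1}} \le \big( C\, 4^k \big)^{1/p_k} \Big( \int_{Q_k} u^{p_k}\, dg_t\, dt \Big)^{1/p_k}.
\]
This is exactly the computation in the proof of Lemma~\ref{lemoserCHE}, but now the cutoff $\psi_k$ is chosen to equal $1$ on $Q_{k+1}$, vanish outside $Q_k$, and satisfy $|\nabla \psi_k| + |\partial_t \psi_k| \le C\, 4^k/(r_k - r_{k+1})^2 \le C\, 4^k$ — here one uses Theorem~\ref{Thm:cutoffnoLp} (via the distance distortion estimate of Theorem~\ref{thdistcon}, exactly as at the start of the proof of Lemma~\ref{lemoserCHE}) to produce a space-time cutoff adapted to the nested cubes, plus a time cutoff. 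The inequality~(\ref{eq:backwardtothep}) for $u^{p_k/2}$, integration by parts, and the uniform Sobolev inequality~(\ref{eq:Sobolevcite}) then yield the displayed recursion; the scalar curvature bound $|R|\le n$ is what lets us absorb the $Ru^{p_k}$ terms into a generic constant. Iterating, taking logarithms, and using $\sum_k k/p_k < \infty$ and $\sum_k 1/p_k < \infty$ gives
\[
\sup_{Q^+(x_0,t_0,\beta)} u^2 = \lim_{k\to\infty} \Big( \int_{Q_k} u^{p_k}\, dg_t\, dt \Big)^{2/p_k} \le C \int_{Q^+(x_0,t_0,1)} u^2\, dg_t\, dt,
\]
which is the claim after undoing the rescaling. (To pass from an $L^{p_k}$-average to $\sup$ one uses that $u$ is continuous, or alternatively invokes Lemma~\ref{lemoserCHE} once at the end to convert a high-$L^p$ bound on a slightly larger cube into an $L^\infty$ bound.)

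The main obstacle, and the reason this cannot be done by naively iterating Lemma~\ref{lemoserCHE} as stated, is precisely the remark preceding the lemma: Lemma~\ref{lemoserCHE} shrinks the cube by a fixed factor $\alpha < 1$ at each step, so iterating it drives the inner cube radius to $0$ and one never reaches a fixed cube $Q^+(x_0,t_0,\beta)$. The fix is to redo the one-step Moser estimate \emph{from scratch} with cutoffs supported on cubes of radii $r_k \downarrow \beta > 0$, paying the price of a factor $4^k$ (from $|\nabla\psi_k|^2 \sim (r_k - r_{k+1})^{-2} \sim 4^k$) at each stage; this factor is harmless because it enters the final product with exponent $\sum_k k/p_k < \infty$. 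A secondary technical point to handle carefully is the geometry of the cubes under the flow: the nesting $Q_{k+1} \subset Q_k$ and the containment of supports must be arranged using Theorem~\ref{thdistcon} so that the constants stay uniform — exactly the device already used at the beginning of the proof of Lemma~\ref{lemoserCHE} (the auxiliary radii $\beta$ and $\gamma$ there), which we simply apply at each scale.
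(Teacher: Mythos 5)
Your proposal takes a genuinely different route from the paper, and the route has a gap at exactly the point the paper flags as the obstacle. The paper's proof of Lemma~\ref{leCHEmv} does \emph{not} run a Moser iteration to $L^\infty$. Instead it applies Lemma~\ref{lemoserCHE} \emph{once}, for one fixed $p > n+2$, to control $\Vert u \Vert_{L^p(Q^+(x_0,t_0,\alpha))}$, then writes $(u\psi)(x,t)$ by the Duhamel formula against the forward heat kernel $K(z,s;x,t)$, integrates by parts, and bounds the two resulting terms (one against $K$, one against $\nabla_z K$) using H\"older, the on-diagonal bound (\ref{eq:Prel-upperhkbound}), the conserved $L^1$ mass of $K$, and the gradient estimate (\ref{eq:Prel-derbound}). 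The ``different path'' announced in the paragraph before the lemma is precisely this kernel representation argument, which sidesteps iteration entirely. You do not mention the representation formula or the heat kernel inputs.

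The gap in your nested iteration is the cutoff. You need $\psi_k$ equal to $1$ on $Q_{k+1}$, zero outside $Q_k$, with $|\nabla\psi_k| \sim (r_k - r_{k+1})^{-1} \sim 2^k$, i.e.\ a cutoff whose transition layer is thin compared to its support and whose gradient \emph{grows} with $k$. Theorem~\ref{Thm:cutoffnoLp} cannot supply this: at scale $s$ it produces $\phi > \rho$ on $P(x_0,t_0,\rho s, -\tau)$, $\phi = 0$ outside $B(x_0,t_0,s)$ and $|\nabla\phi| < s^{-1}$, where $\rho$ is a \emph{fixed universal} constant $<1$. So the ratio of inner to outer radius is pinned at $\rho$ and the gradient is bounded by the scale, not by the gap. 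Applying it at scale $\approx\beta$ gives gradient $\lesssim\beta^{-1}$, uniform in $k$; applying it at scale $\approx r_k-r_{k+1}\approx 2^{-k}$ gives the right gradient but support in a $2^{-k}$-ball, not $Q_k$. Thus ``simply apply at each scale,'' as you put it, does not produce what the iteration requires. A repair is conceivable (e.g.\ running the iteration on superlevel sets $\{\psi > a_k\}$ of a single Theorem~\ref{Thm:cutoffnoLp} cutoff, composed with steep one-variable functions $\chi_k$), but that is a genuinely new construction which you neither state nor verify, and the iterated regions would no longer be the cubes $Q_k$. As written, the proposal leaves the essential difficulty --- the one the paper explicitly identifies --- unresolved.
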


\begin{proof}
As in the previous proof, we may assume that $r_0 = 1$ and $|R|  \leq n$ on $\M \times [t_0 - \frac12, t_0]$.
Let $\alpha  > 0$ be the constant from Lemma \ref{lemoserCHE} for some $p > n+2$, which we will fix for the rest of the proof and let $\rho > 0$ be the constant from Theorem \ref{Thm:cutoffnoLp}.
By Theorem \ref{thdistcon} there is a uniform constant $\gamma > 0$ such that
\[ P(x_0, s, \gamma \rho , s-t_0) \subset Q^+ (x_0, t_0, \alpha) \qquad \text{for all} \qquad s \in [t_0, t_0 + (\gamma \rho)^2]. \]
Next, we can find a uniform constant $0 < \beta < (\gamma \rho)^2$ such that
\[ Q^+ (x_0, t_0, \beta) \subset P(x_0, t_0 + 2\beta^2, \gamma \rho, - 2\beta^2). \]
So we have
\begin{equation} \label{eq:PinOmega}
 Q^+ (x_0, t_0, \beta) \subset P(x_0, t_0 + 2 \beta^2, \gamma \rho, - 2 \beta^2) \subset Q^+ (x_0, t_0, \alpha ).
 \end{equation}
Apply Theorem \ref{Thm:cutoffnoLp} at the point $x_0$, at scale $\gamma$ and at time $t_0 + 2\beta^2$.
We obtain a cutoff function $\phi \in C^\infty ( \M \times [t_0, t_0 + 2 \beta^2])$ with the following properties for some universal constant $C_0 < \infty$:
\begin{enumerate}[label=(\roman*)]
\item $0 \leq \phi < 1$, $| \nabla \phi (\cdot, t) | \leq \gamma^{-1} \leq C_0$, $|\partial_t \phi| + |\Delta \phi | \leq \gamma^{-2} \leq C_0$ on $\M \times [t_0, t_0 + 2\beta^2]$.
\item $\phi = 0$ on $(\M \setminus B(x_0, t_0 + 2 \beta^2, \gamma)) \times [t_0, t_0 + 2\beta^2]$.
\item $\phi > \rho$ on $P(x_0, t_0 + 2\beta^2, \gamma \rho, - 2\beta^2)$.
\end{enumerate}
Let $\eta \in C^\infty ([t_0, t_0 + 2\beta^2])$ be a cutoff function with $\eta \equiv 1$ on $[t_0, t_0 + \beta^2]$, $\eta (t_0 +2\beta^2) = 0$, $\eta \leq 1$ and $|\eta' | < C_1$ for some uniform constant $C_1 < \infty$.
Then the function
\[ \psi (x,t) := \eta (t) \phi (x,t) \]
still satisfies properties (i), (ii) above, possibly with a larger constant $C_0$ and we have $\psi (\cdot, t_0 +2\beta^2) \equiv 0$.

Notice that
\[ \Delta (u \psi) - R u \psi + \partial_t (u\psi) = u \Delta \psi + u \partial_t \psi + 2 \nabla u \nabla \psi. \]
Fix $(x, t) \in Q^+ (x_0, t_0, \beta)$ and consider the heat kernel $K (z, s) = K(z, s ; x, t)$ of the forward heat equation, $\partial_t K = \Delta K$, centered at $(x,t)$.
Then
\[  (u \psi) (x,t) = \int_t^{t_0 + 2\beta^2} \int_{\M} K(z,s) \big( u \Delta \psi + u \partial_s \psi + 2 \nabla u \nabla \psi \big) (z,s) dg_s (z) ds.  \]
After integration by parts, this becomes
\begin{multline*}
 (u \psi) (x,t) =   \int_t^{t_0 + 2\beta^2} \int_{\M} K(z,s) u(z,s) \big( - \Delta \psi + \partial_s \psi) (z,s) dg_s(z) ds  \\
+ 2  \int_t^{t_0 + 2\beta^2} \int_{\M} u(z,s) \nabla_z K(z,s) \nabla \psi (z,t) dg_s(z) ds
\end{multline*}
From properties (i), (ii) of the cutoff function $\psi$ and (\ref{eq:PinOmega}), it follows that
\begin{alignat*}{1}
|u \psi |(x,t) &\leq C_0 \int_{t}^{t_0 + 2\beta^2} \int_{B(x_0, s, \alpha)} K(z,s) |u| (z,s) dg_s (z) ds \\
&\qquad\qquad + 2C_0 \int_t^{t_0 + 2\beta^2} \int_{B(x_0, s, \alpha)} |\nabla_z K(z,s)| \cdot |u| (z,s) dg_s (z) ds \\
&\equiv C_0 I_1 + 2 C_0 I_2.
\end{alignat*}
Next we bound $I_1$.
By H\"older's inequality, for $q = p/(p-1)$,
\begin{alignat*}{1}
I_1 &\leq \bigg( \int_t^{t_0 + 2\beta^2} \int_{B(x_0, s, \alpha)} K^q(z,s) dg_s (z) ds \bigg)^{1/q} \bigg( \int_t^{t_0 + 2\beta^2} \int_{B(x_0, s, \alpha)} |u|^p dg_s ds \bigg)^{1/p} \\
&\leq \bigg( \int_t^{t_0 + 2\beta^2} \int_{B(x_0, s, \alpha)} K^q(z,s) dg_s (z) ds \bigg)^{1/q} A \Vert u \Vert_{L^2 (Q^+(x_0, t_0, 1))},
\end{alignat*}
where we have used Lemma \ref{lemoserCHE}.
By (\ref{eq:Prel-upperhkbound}), there exists a universal constant $C$ such that
\begin{equation} \label{G<1/tn/2}
K(z, s) \le \frac{C}{(s-t)^{n/2}} \qquad \text{for} \qquad s \in (t, t + 1]. 
\end{equation}
On the other hand, we deduce from
\[ \frac{d}{ds} \int_{\M} K(z,s) dg_s (z) = - \int_{\M} R(z,s) K(z,s) dg_s (z) \leq n \int_{\M} K(z,s) dg_s (z) \]
that
\[ \int_{\M} K(z,s ) dg_s (z) \leq C \qquad \text{for} \qquad s \in (t, t+1]. \]
So by the interpolation inequality
\begin{alignat*}{1}
I_1 &\leq C \bigg( \int_{t}^{t_0 + 2\beta^2} \int_{B(x_0, s, \alpha)} K (z, s)  \frac1{(s-t)^{(q-1) n/2}} dg_s (z) ds \bigg)^{1/q} A \Vert u \Vert_{L^2 (Q^+(x_0, t_0, 1))} \\
 &\leq C \bigg( \int_t^{t_0+ 2\beta^2}  \frac1{(s-t)^{(q-1) n/2}} ds \bigg)^{1/q} A \Vert u \Vert_{L^2 (Q^+(x_0, t_0, 1))} .
\end{alignat*}
Since $p > n+2$, we have $(q - 1) n/2 < 1$.
This shows that
\[
I_1 \leq C A \Vert u \Vert_{L^2(Q^+(x_0, t_0, 1))}.
\]

Next we bound $I_2$, which requires a bound on $\nabla_z K(z,s ; x, t)$.
From (\ref{G<1/tn/2}), we know that for all $s \in (t, t+1]$.
\[ J_s := \sup_{\M \times [\frac{s+t}2,s]} K (\cdot, \cdot ; x, t) \leq \frac{2^{n/2} C}{(s-t)^{n/2}}. \]
Since the function $K(z,s) = K(z,s; x,t)$ is a solution to the (forward) heat equation, we deduce using (\ref{eq:Prel-derbound})
\[ \frac{|\nabla_z K(z,s)|^2}{K^2 (z,s)} \leq \frac{C}{s-t} \log \frac{J_s}{K(z,s)}. \]
Letting $q = p /(p-1)$ again, we find
\[ |\nabla_z K(z,s) |^q \leq K (z,s) \frac{C J_s^{q-1}}{(s-t)^{q/2}} \cdot \bigg( \frac{K(z,s)}{J_s} \bigg)^{q-1} \bigg( {\log \frac{J_s}{K(z,s)}} \bigg)^{q/2}. \]
Note that since $K(z,s) / J_s \leq 1$, the product of the last two factors is bounded from above, which implies
\[ |\nabla_z K(z,s) |^q \leq \frac{C}{(s-t)^{((n+1)q - n)/2}} K(z,s), \]
where $C$ only depends on $\nu[g_0, 2T]$ and $n$.
Using this gradient bound and H\"older's inequality, we deduce similarly as before
\begin{alignat*}{1}
I_2 &\leq \bigg( \int_t^{t_0 + 2\beta^2} \int_{B(x_0, s, \alpha)} |\nabla_z K|^q (z,s) dg_s (z) ds \bigg)^{1/q} \bigg( \int_t^{t_0 +2\beta^2} \int_{B(x_0, s, \alpha)} |u|^p dg_s ds \bigg)^{1/p} \\
&\leq \bigg( \int_t^{t_0 + 2\beta^2} \int_{B(x_0, s, \alpha)} K (z,s) dg_s \frac{C}{(s-t)^{((n+1)q - n)/2}} ds \bigg)^{1/q} A \Vert u \Vert_{L^2 (Q^+ (x_0, t_0, 1))} \\
&\leq \bigg( \int_t^{t_0 + 2\beta^2}  \frac{C}{(s-t)^{((n+1)q - n)/2}} ds \bigg)^{1/q} A \Vert u \Vert_{L^2 (Q^+ (x_0, t_0, 1))} 
\end{alignat*}
From $p > n+2$, one knows $((n+1)q - n)/2 < 1$, therefore
\[ I_2 \leq CA \Vert u \Vert_{L^2 (Q^+ (x_0, t_0, 1))}. \]

Altogether, this gives
\[ |u\psi|(x,t) \leq C A \Vert u \Vert_{L^2 (Q^+ (x_0, t_0, 1))}. \]
The claim now follows from the fact that $\psi (x,t) > \rho$.
\end{proof}

Next we present  a mean value inequality for the (forward) heat equation.
We begin with a lemma that is similar to Lemma \ref{lemoserCHE} on the conjugate heat equation.

\begin{lemma}
\lab{lemoserHE}
Let $(\M^n, (g_t)_{t \in [0,T)})$, $T < \infty$ be a Ricci flow on a compact $n$-manifold that satisfies $R \leq R_0 < \infty$ everywhere and let $p > 2$.
Then there are constants $0 < \alpha < 1$, $A < \infty$, which only depend on $\nu[g_0, 2T]$, $n$ and $p$, such that the following holds:

Let $(x_0, t_0) \in \M \times [0,T)$ and $0 < r_0 \leq \min \{ R_0^{-1/2}, \sqrt{t_0} \}$ and let $u \in C^\infty (\M \times (t_0 - r_0^2, t_0])$ be a solution of the  heat equation (\ref{he}).
Then
\begin{equation} \label{eq:Lpboundforward}
\bigg( \int_{Q^- (x_0, t_0, \a r)} |u|^p dg_t dt \bigg)^{2/p}
\le \frac{A}{r^{(n+2) (p-2)/p}} \int _{Q^-(x_0, t_0,  r)} |u|^2 dg_t dt
\end{equation}
and
\begin{equation} \label{eq:nablaLpboundforward}
\bigg( \int_{Q^- (x_0, t_0, \a r)} |\d u|^p dg_t dt \bigg)^{2/p}
\le \frac{A}{r^{((n+2) (p-2)/p)+2}} \int _{Q^-(x_0, t_0,  r)} |u|^2 dg_t dt.
\end{equation}
\end{lemma}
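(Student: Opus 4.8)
The plan is to run a Moser iteration on backward parabolic cubes, mirroring the proof of Lemma \ref{lemoserCHE} with the role of time reversed. By parabolic rescaling we may assume $r=r_0=1$; then the hypothesis $r_0\le\min\{R_0^{-1/2},\sqrt{t_0}\}$ together with the general lower bound $R(\cdot,t)>-\tfrac{n}{2t}$ gives $|R|\le n$ on $\M\times[t_0-\tfrac12,t_0]$, which contains every time-slice we shall use. First I would use the distance distortion estimate of Theorem \ref{thdistcon} to choose uniform constants $\beta,\gamma>0$ such that, for all $t\in[t_0-2\gamma^2,t_0]$, the fixed time-$t_0$ ball $B(x_0,t_0,\beta)$ lies inside $B(x_0,t,1)$ while $B(x_0,t,\gamma)$ lies inside $B(x_0,t_0,\beta\rho)$, where $\rho$ is the constant from Theorem \ref{Thm:cutoffnoLp}. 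Then I would apply Theorem \ref{Thm:cutoffnoLp} at $(x_0,t_0)$ at scale $\beta$ over the backward neighborhood $P(x_0,t_0,\beta,-2\gamma^2)$ --- note that this theorem naturally outputs cutoffs on \emph{backward} parabolic neighborhoods, exactly the geometry a backward cube calls for --- multiply the resulting $\phi$ by a temporal cutoff $\eta\in C^\infty([t_0-2\gamma^2,t_0])$ with $\eta\equiv1$ on $[t_0-\gamma^2,t_0]$, $\eta(t_0-2\gamma^2)=0$, $0\le\eta\le1$, $|\eta'|<C_1$, and set $\psi:=\eta\phi$. Then $\psi$ vanishes on the initial slice $\{t=t_0-2\gamma^2\}$ and off $B(x_0,t_0,\beta)$, has $|\nabla\psi|$, $|\partial_t\psi|$, $|\Delta\psi|$ bounded by a universal constant, and satisfies $\psi>\rho$ on $Q^-(x_0,t_0,\gamma)$.

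Next comes the energy estimate. Since $u$ solves the forward heat equation coupled to the Ricci flow, $|u|^p$ is a nonnegative subsolution of the heat equation for $p\ge2$, because $(\partial_t-\Delta)|u|^p=-p(p-1)|u|^{p-2}|\nabla u|^2\le0$. Differentiating $\int_\M|u|^p\psi^2\,dg_t$, disposing of the volume-form term $-\int|u|^p\psi^2R\,dg_t$ using $|R|\le n$, integrating by parts, and absorbing the $\nabla\psi$ cross-term into $\int_\M\bigl|\nabla(|u|^{p/2}\psi)\bigr|^2dg_t$ by Cauchy--Schwarz, one obtains a differential inequality which, integrated from the initial slice $t_0-2\gamma^2$ (where the boundary term vanishes since $\psi$ does) up to a variable time and then fed into the Sobolev inequality (\ref{eq:Sobolevcite}), yields
\[ \sup_{t\in[t_0-2\gamma^2,t_0]}\int_\M|u|^p\psi^2\,dg_t+\int_{t_0-2\gamma^2}^{t_0}\Big(\int_\M|u|^{\frac{pn}{n-2}}\psi^2\,dg_t\Big)^{\frac{n-2}{n}}dt\le C\int_{Q^-(x_0,t_0,1)}|u|^p\,dg_t\,dt, \]
in exact analogy with Lemma \ref{lemoserCHE}. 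The H\"older and interpolation step used there then upgrades the exponent from $p$ to $p(1+\tfrac2n)$ at the price of shrinking the cube from radius $1$ to radius $\gamma$; iterating this, with a parabolic rescaling at each stage, starting from $p=2$ produces (\ref{eq:Lpboundforward}) for $p=2(1+\tfrac2n)^k$, $k\ge1$, the powers of $r$ reappearing from the rescalings, and arbitrary $p>2$ follows by interpolation. Since the argument used only the subsolution property of $|u|^p$, the same estimate holds with $|u|$ replaced by any nonnegative subsolution of the heat equation.

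For the gradient bound (\ref{eq:nablaLpboundforward}) I would first recall that, by the Bochner formula applied under Ricci flow, the Ricci terms cancel against $\partial_t g_t$, giving $(\partial_t-\Delta)|\nabla u|^2=-2|\nabla^2u|^2\le0$; hence $|\nabla u|^2$ is a nonnegative subsolution and $|\nabla u|^p=(|\nabla u|^2)^{p/2}$ is a subsolution for $p\ge2$. By the previous paragraph this gives
\[ \Big(\int_{Q^-(x_0,t_0,\alpha r)}|\nabla u|^p\,dg_t\,dt\Big)^{2/p}\le\frac{A}{r^{(n+2)(p-2)/p}}\int_{Q^-(x_0,t_0,r)}|\nabla u|^2\,dg_t\,dt. \]
It then remains to bound $\int_{Q^-(r)}|\nabla u|^2$ by $Cr^{-2}\int_{Q^-(2r)}|u|^2$, which is the standard Caccioppoli inequality: multiply $\partial_t u=\Delta u$ by $u\psi^2$, integrate by parts, and control the curvature and cutoff terms via $|R|\le n$ and the volume distortion (\ref{eq:Prel-voldistortion}), so that all constants stay uniform. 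Combining the two bounds and rescaling the radius gives (\ref{eq:nablaLpboundforward}), the extra $2$ in the exponent of $r$ coming precisely from the $r^{-2}$ in Caccioppoli.

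A word on the difficulty: there is no genuinely new obstacle beyond Lemma \ref{lemoserCHE}, and the two points that need care are (i) orienting the cutoff $\psi$ correctly for a backward cube, so that it is the boundary term at the \emph{earlier} time --- not the later one --- that is killed upon integration, and (ii) checking the sign in the Bochner-type identity for $|\nabla u|^2$ under Ricci flow; it is exactly this cancellation of Ricci terms that lets the gradient estimate reduce cleanly to the scalar mean value inequality.
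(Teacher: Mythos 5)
Your proof is correct and follows essentially the same route as the paper's: parabolic rescaling to $r_0=1$, building the space-time cutoff $\psi$ from Theorem \ref{Thm:cutoffnoLp} and a temporal cutoff so that $\psi$ vanishes on the earlier boundary slice, running the energy/Sobolev/Moser iteration on the subsolution $|u|^p$, and then for the gradient estimate using $(\partial_t-\Delta)|\nabla u|^2=-2|\Hess u|^2\le0$ together with a Caccioppoli-type $L^2$ bound. The only cosmetic difference is the order of the two steps in the gradient part (you apply Moser to $|\nabla u|$ first and Caccioppoli second, whereas the paper does the reverse), which has no mathematical consequence beyond a harmless renaming of radii.
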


\begin{proof}
The proof is similar to that of Lemma \ref{lemoserCHE}.
Assume again that $r_0 = 1$ and that $|R| \leq n$ on $\M \times [t_0 - \frac12, t_0]$.
Using Theorem \ref{Thm:cutoffnoLp} and the distance distortion estimates of Theorem \ref{thdistcon}, we can construct a cutoff function $\psi \in C^\infty ( \M \times [t_0 - 2 \gamma^2, t_0] )$, for some uniform $\gamma > 0$, such that for some uniform $C_0 < \infty$
\begin{enumerate}[label=(\roman*)]
\item $0 \leq \psi < 1$, $| \nabla \psi | < C_0$, $| \partial_t \psi | < C_0$ on $\M \times [t_0 - 2 \gamma^2, t_0]$.
\item $\psi = 0$ on $( \M \times [t_0 - 2 \gamma^2, t_0 ] ) \setminus Q^- (x_0, t_0, 1)$.
\item $\psi > \rho$ on $Q^- (x_0, t_0, \gamma)$.
\item $\psi (\cdot, t_0 - 2\gamma^2) \equiv 0$.
\end{enumerate}
Analogously to (\ref{eq:backwardtothep}), we use the inequality
\begin{equation} \label{eq:forwardtothep}
 \Delta u^{p/2} - \partial_t u^{p/2} \geq 0
\end{equation}
and integrate $u^p$ against $\psi^2$, which gives us for any $t \in [t_0 - 2 \gamma^2, t_0]$
\begin{equation} \label{eq:dtintegralforward}
 - \frac{d}{dt} \int_{\M} u^p \psi^2 dg_t dt \geq 2 \int_{\M} \big| \nabla ( u^{p/2} \psi ) \big|^2 dg_t - C \int_{B(x_0, t, 1)} u^p dg_t. 
\end{equation}
Applying the Sobolev inequality (\ref{eq:Sobolevcite}), integration in time and using H\"older's inequality and (iii) yields
\[ \bigg( \int_{Q^- (x_0, t_0, \gamma)} u^{p (1+ \frac2n)} dg_t dt \bigg)^{1/p(1+ \frac2n)} \leq C \bigg( \int_{Q^- (x_0, t_0, 1)} u^p dg_t dt \bigg)^{1/p}. \]
Iterating this inequality for $p = 2 (1+ \frac2n )^k$ and $\alpha = \gamma^k$ and using an interpolation argument proves (\ref{eq:Lpboundforward}).

In order to prove (\ref{eq:nablaLpboundforward}), we first integrate (\ref{eq:dtintegralforward}) for $p = 2$ in time and obtain
\[  - \int_{\M} u^2 (\cdot, t_0) \psi^2 (\cdot, t_0) dg_t dt \geq 2 \int_{t_0 - 2 \gamma^2}^{t_0} \int_{\M} \big | \nabla (u \psi) \big|^2 dg_t dt - C \int_{Q^- (x_0, t_0, 1)} u^2 dg_t dt. \]
So
\[ \int_{t_0 - 2 \gamma^2}^{t_0} \int_{\M} \big | \nabla (u \psi) \big|^2 dg_t dt  \leq C \int_{Q^- (x_0, t_0, 1)} u^2 dg_t dt. \]
Since $| \nabla u  |^2 \psi^2 = | \nabla (u\psi) - u \nabla \psi |^2 \leq 2 |\nabla ( u \psi) |^2 \psi^2 + 2 u^2 |\nabla \psi |^2$, we have
\[ \int_{Q^- (x_0, t_0, \gamma )} |\nabla u |^2 dg_t dt \leq C \int_{Q^- (x_0, t_0, 1)} u^2 dg_t dt. \]
We will now lift this $L^2$-estimate to an $L^p$-estimate using Moser iteration.
The proof is similar to the proof of (\ref{eq:Lpboundforward}), except that we have to replace $u$ by $|\nabla u|$ and (\ref{eq:forwardtothep}) by
\[ \Delta |\d u|^2 - \partial_t |\d u|^2 = 2 | {\Hess} \; u|^2 \geq 0. \]
Then we obtain
\[ \bigg( \int_{Q^- (x_0, t_0, \gamma^2)} |\nabla u|^{p (1+ \frac2n)} dg_t dt \bigg)^{1/p(1+ \frac2n)} \leq C \bigg( \int_{Q^- (x_0, t_0, 1)} |\nabla u|^p dg_t dt \bigg)^{1/p}. \]
Iterating this inequality proves the desired result.
\end{proof}

Now we state and prove the mean value inequality for the heat equation.

\begin{proposition} \label{prHEmv}
Let $(\M^n, (g_t)_{t \in [0,T)})$, $T < \infty$ be a Ricci flow on a compact $n$-manifold that satisfies
$R \leq R_0 < \infty$ everywhere.
Then there are constants $\beta > 0$, $C < \infty$, which only depend on $\nu[g_0, 2T], n$, such that the following holds:

Let $(x_0, t_0) \in \M \times [0,T)$ and $0 < r_0 \leq \min \{ R_0^{-1/2}, \sqrt{t_0} \}$ and let $u \in C^\infty (\M \times ( t_0 - r_0^2 , t_0])$ be a solution of the heat equation (\ref{he}).
Then
\[ \sup_{Q^-(x_0, t_0, \a r_0)} u^2 \le \frac{C}{r^{n+2}} \int_{Q^-(x_0, t_0, r_0)} u^2 dg_t dt. \]
\end{proposition}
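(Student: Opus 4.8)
The plan is to run the argument in the proof of Lemma \ref{leCHEmv} ``backwards in time'': there a solution of the conjugate heat equation was represented against the forward heat kernel, and here, since $u$ solves the forward heat equation~(\ref{he}), we represent it against $K(x,t;\cdot,\cdot)$, which in its last two arguments solves the conjugate heat equation. After parabolic rescaling we may assume $r_0 = 1$, $t_0 \ge 1$ and $|R| \le n$ on $\M \times [t_0 - \tfrac12, t_0]$. Fix $p > n+2$, let $\alpha = \alpha(p) \in (0,1)$ be the constant from Lemma \ref{lemoserHE}, let $\rho > 0$ be the constant from Theorem \ref{Thm:cutoffnoLp}, and fix an arbitrary point $(x,t) \in Q^-(x_0, t_0, \beta)$, where $\beta > 0$ is a small universal constant to be determined. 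It suffices to bound $u(x,t)^2$ by $C\int_{Q^-(x_0,t_0,1)} u^2 \, dg_t\, dt$.

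First I would construct --- exactly as in the proofs of Lemma \ref{leCHEmv} and Lemma \ref{lemoserHE}, i.e.\ by applying Theorem \ref{Thm:cutoffnoLp} at a suitable subscale, using the distance distortion estimate Theorem \ref{thdistcon} to arrange a nested family of backward parabolic cubes, and multiplying by a temporal cutoff --- a function $\psi \in C^\infty(\M \times [t_0 - 2\beta^2, t_0])$ with $0 \le \psi < 1$, $|\nabla \psi| + |\partial_t \psi| + |\Delta \psi| \le C_0$ on $\M \times [t_0 - 2\beta^2, t_0]$, with $\psi(\cdot, t_0 - 2\beta^2) \equiv 0$, with $\psi(\cdot, s)$ supported in $B(x_0, s, \alpha)$ for every $s \in [t_0 - 2\beta^2, t_0]$, and with $\psi > \rho$ on $Q^-(x_0, t_0, \beta)$; in particular $\psi(x,t) > \rho$. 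Since $(\partial_t - \Delta)(u\psi) = u\,\partial_t\psi - u\,\Delta\psi - 2\nabla u \cdot \nabla\psi$, Duhamel's formula for the heat kernel (the boundary term at $s = t_0 - 2\beta^2$ vanishing because $\psi \equiv 0$ there) yields
\[ (u\psi)(x,t) = \int_{t_0 - 2\beta^2}^{t} \int_{\M} K(x,t;z,s) \big( u\,\partial_s\psi - u\,\Delta\psi - 2\nabla u \cdot \nabla\psi \big)(z,s)\, dg_s(z)\, ds. \]
In contrast to Lemma \ref{leCHEmv}, I will \emph{not} integrate by parts in the last term: $K(x,t;\cdot,\cdot)$ solves the conjugate, not the forward, heat equation in $(z,s)$, so the gradient bound (\ref{eq:Prel-derbound}) is not available for $\nabla_z K$; instead I keep the derivative on $u$. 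This is exactly why Lemma \ref{lemoserHE} was stated with the auxiliary $L^p$-estimate (\ref{eq:nablaLpboundforward}) for $|\nabla u|$.

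Estimating crudely then gives $|u\psi|(x,t) \le C_0(I_1 + 2 I_2)$ with $I_1 = \int\!\int_{B(x_0,s,\alpha)} K |u|\, dg_s\, ds$ and $I_2 = \int\!\int_{B(x_0,s,\alpha)} K |\nabla u|\, dg_s\, ds$. For each I would apply H\"older's inequality with exponents $q = p/(p-1)$ and $p$; the $L^p$-factors are bounded by $C\Vert u \Vert_{L^2(Q^-(x_0,t_0,1))}$ using (\ref{eq:Lpboundforward}) and (\ref{eq:nablaLpboundforward}) of Lemma \ref{lemoserHE} respectively (here we use that the support of $\psi$ lies inside $Q^-(x_0,t_0,\alpha)$, which is guaranteed by the choice of $\beta$ together with the distance distortion estimate). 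The remaining factor $\big(\int\!\int_{B(x_0,s,\alpha)} K^q\, dg_s\, ds\big)^{1/q}$ is handled by interpolation: by (\ref{eq:Prel-upperhkbound}) one has $K(x,t;z,s) \le C(t-s)^{-n/2}$ for $t-s \le 1$, while the conjugate heat flow conserves mass, $\int_{\M} K(x,t;z,s)\, dg_s(z) = 1$; hence $\int_{\M} K^q\, dg_s \le C(t-s)^{-(q-1)n/2}$, and since $p > n+2$ forces $(q-1)n/2 < 1$, the time integral $\int_{t_0-2\beta^2}^t (t-s)^{-(q-1)n/2}\, ds$ is finite and uniformly bounded. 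Combining, $|u\psi|(x,t) \le C\Vert u \Vert_{L^2(Q^-(x_0,t_0,1))}$; dividing by $\psi(x,t) > \rho$, squaring, taking the supremum over $(x,t) \in Q^-(x_0,t_0,\beta)$, and undoing the rescaling gives the claim (with $\beta$ in place of $\alpha$).

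The main obstacle is, as in Lemma \ref{leCHEmv}, the time singularity of $K(x,t;z,s)$ as $s \nearrow t$; it is neutralized by the interpolation between the on-diagonal bound and the conserved spatial $L^1$-mass, provided $p$ is chosen large enough, $p > n+2$. The other point to watch is that one cannot move the gradient onto the conjugate heat kernel, which forces the use of the gradient estimate (\ref{eq:nablaLpboundforward}) of Lemma \ref{lemoserHE} for $|\nabla u|$ in place of a kernel gradient bound.
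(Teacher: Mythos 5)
Your proof is correct and follows essentially the same route as the paper: after rescaling and constructing the cutoff $\psi$ via Theorem~\ref{Thm:cutoffnoLp} and the distance distortion estimate, both proofs represent $(u\psi)(x,t)$ against the conjugate heat kernel $K(x,t;\cdot,\cdot)$ via Duhamel, keep the gradient on $u$ rather than moving it onto $K$ (using (\ref{eq:nablaLpboundforward}) in place of a kernel gradient bound), apply H\"older with $q = p/(p-1)$ and Lemma~\ref{lemoserHE}, and control the $K^q$ factor by interpolating the on-diagonal bound (\ref{eq:Prel-upperhkbound}) against the conserved $L^1$-mass $\int_{\M} K(x,t;\cdot,s)\,dg_s = 1$, using $p > n+2$ to render the time-singularity integrable. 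The paper's proof is identical in all essentials; your added commentary on why one cannot integrate by parts onto $K$ is accurate.
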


\begin{proof}
The proof is similar to that of Lemma \ref{leCHEmv}.
Assume again that $r_0 = 1$ and hence that $| R | \leq n$.
Fix some $p > n+2$ and determine $\alpha > 0$ from the previous Lemma \ref{lemoserHE}.
Using Theorem \ref{Thm:cutoffnoLp} and the distance distortion estimates of Theorem \ref{thdistcon}, we can construct a cutoff function $\psi \in C^\infty ( \M \times [0, t_0])$ for some uniform $0 < \beta < \alpha$ such that
\begin{enumerate}[label=(\roman*)]
\item $0 \leq \psi < 1$, $|\nabla \psi(\cdot, t) | < C_0$, $|\partial_t \psi | + |\Delta \psi | < C_0$ on $\M \times [0, t_0]$
\item $\psi = 0$ on $\M \times [0, t_0) \setminus Q^- (x_0, t_0, \alpha )$.
\item $\psi > \rho$ on $Q^- (x_0, t_0, \beta)$.
\end{enumerate}
Clearly
\[ \Delta (u \psi) - \p_t (u \psi)= u( \Delta \psi - \partial_t \psi) + 2 \d \psi \d u. \] 
Fixing $(x, t) \in Q^- (x_0, t_0, \beta )$, let $K (y, s) = K(x, t; y, s)$ be the fundamental solution of the conjugate heat equation on the whole manifold.  
Then
\[
(u \psi) (x, t) = - \int_{0}^{t} \int_{\M} K(y, s) \big( u  (\Delta  - \partial_t ) \psi  +
2 \d \psi \d u \big) (y, s) dg_s (y) ds.
\]  
From properties (i)--(ii) of the cutoff function $\psi$,  we deduce
\[ |u \psi | (x, t) \le C   \int^t_{t_0-\a^2} \int_{B(x_0, s, \a)} K( y, s) \big( |u| + |\nabla u| \big) (s,t) dg_s(y) ds
\]
By H\"older's inequality, for
$q = p/(p-1)$,
\[
\al
|u \psi | (x,t) &\le  \Big(\int^t_{t_0-\a^2} \int_{B(x_0, s, \a)} K^q(y, s) dg_s (y) ds \Big)^{1/q} \times \\
&\qquad\qquad \bigg[ \bigg(\int^t_{t_0-\a^2 } \int_{B(x_0, s, \a)} |u|^p(y,s) dg_s (y) ds \bigg)^{1/p} \\
&\qquad\qquad\qquad + \bigg(\int^t_{t_0-\a^2 } \int_{B(x_0, s, \a)} |\nabla u|^p(y,s) dg_s (y) ds \bigg)^{1/p}  \bigg] \\
&\le \left(\int^t_{t_0-\a^2} \int_{B(x_0, s, \a)} K^q(y, s) dg_s (y) ds \right)^{1/q}  A \Vert u \Vert_{L^2(Q^-(x_0, t_0, 1))},
\eal
\]
where we have used Lemma \ref{lemoserHE}. 
By (\ref{eq:Prel-upperhkbound}) again, there exists a universal constant $C_1 < \infty$ such that
\[ K(y, s) \le \frac{C_1}{(t-s)^{n/2}} \qquad \text{for} \qquad s \in [t-1, t). \]
Substituting this to the previous inequality, we deduce
\[ |u \psi|(x,t) \le C \left(\int^t_{t_0-\a^2} \int_{B(x_0, s, \a)} K( y, s) dg_s (y) \frac{1}{(t-s)^{(q-1) n/2}} ds \right)^{1/q}  A \Vert u \Vert_{L^2(Q^-(x_0, t_0, 1))}. \] 
Since $\int_{B(x_0, s, \a)} K( \cdot , s) dg_s  \le 1$, this implies
\[ |u \psi| (x,t) \le C \left(\int^t_{t_0-\a^2} \frac{1}{(t-s)^{(q-1) n/2}} d\tau \right)^{1/q}  A \Vert u \Vert_{L^2(Q^-(x_0, t_0, 1))}. \]
From the assumption $p>n+2$,  we have $(q-1) n/2<1$, which shows
\[ |u \psi | (x,t) \le C A \Vert u \Vert_{L^2(Q^-(x_0, t_0, 1))}. \]
The proposition follows using property (iii).
\end{proof}

\section{Bounds on the heat kernel and its gradient} \label{sec:heatkernelbound}
Now we are ready to prove bounds for the fundamental solution of the heat equation and its gradient. 
A similar lower bound is proven in \cite{Z11:1}. 
An integral average upper bound is proven in \cite{HN:1}, which is very important for us.

\begin{proof}[Proof of Theorem \ref{thkernelUP}]
We may assume by rescaling that $R_0 = 1$.

\textit{Step 1.}
The lower bound follows quickly from Theorem \ref{thdistcon} and (\ref{eq:Prel-lowerhkbound}).
Recall that
\[ K(x,t; y,s) > \frac{C^{-1}}{(t-s)^{n/2}} \exp \Big( { - \frac{d^2_t(x,t)}{2(t-s)}} \Big). \]
Notice that the distance is measured by the metric $g_t$. 
Now we have to get a lower bound involving the distance $d_s (x, y)$, measured by the metric $g_s$. 
By Theorem \ref{thdistcon}, applied multiple times at scale $r_0 \leftarrow A^{-1/2} \sqrt{t-s}$, there is a uniform constant $B = B(A) < \infty$ such that if $d_t (x,y) \geq B \sqrt{t-s}$, then $d_s (x,y) > B^{-1} d_t (x,y)$, which implies
\[
\exp \Big( {- \frac{d^2_t(x, y)}{2(t-s)}  } \Big) \ge \exp \Big( {- \frac{B^{2} d^2_s (x, y)}{2(t-s)} } \Big).
\]
On the other hand, if $d_t (x,y) < B \sqrt{t-s}$, then
\[
\exp \Big( {- \frac{ d^2_t (x, y)}{2 (t-s)} } \Big) \ge \exp \Big({ - \frac{B^2}{2} }\Big)  \ge \exp \Big({ - \frac{B^2}{2} }\Big)  \exp \Big( {- \frac{d^2_s (x, y)}{2(t-s)} }\Big).
\]
So in both cases, we get the desired bound for the right choices of $C_1$ and $C_2$.
Note a similar argument allows us to replace $d_s (x, y)$ by $d_t (x, y)$ in the other heat kernel bounds.

\textit{Step 2.} 
We prove the  bound for $K$.
Recall that by (\ref{eq:Prel-upperhkbound}), there exists $C = C(A) < \infty$ such that
\be
\lab{GdiaJie}
 K(x, t; y, l) \le \frac{C}{(t-l)^{n/2}} \qquad  \text{for} \quad l \in [s, t].
\ee 
Using Theorem \ref{thdistcon} multiple times at scale $r_0 \leftarrow  A^{-1/2} \sqrt{t-s}$, we can find a uniform constant $B = B(A) < \infty$ such that if $d_s (x,y) > B \sqrt{t-s}$, then
\begin{equation} \label{dxyl=dxy0}
B^{-1} d_{l_1} (x,y) \leq d_{l_2} (x,y) \leq B d_{l_1} (x,y) \qquad \text{for all} \quad l_1, l_2 \in [s,t].
\end{equation}
We first consider the case in which $d_s (x,y) \leq  B \sqrt{t-s}$.
By (\ref{GdiaJie}) we have
\[ K(x,t; y, s) \leq \frac{C}{(t-s)^{n/2}}  \exp (B^2 ) \exp \Big( {- \frac{\dist_s^2 (x,y)}{t-s} } \Big). \]
So in this case we are done.
Therefore, assume from now on that (\ref{dxyl=dxy0}) holds.

Next let us recall the following average bound for $K$ in Theorem 1.13 and (1.20) of \cite{HN:1} (cf Theorem 1.30 and (1.32) on their arXiv version), for $l \in [s,t]$:
\begin{multline} \label{eq:HNresult}
\frac{1}{ |B(y,  l, \sqrt{t-l})|_{l}} \int_{B(y, l, \sqrt{t-l})} K (x, t; z, l) dg_l (z) \\
\le \bigg( \int_{B(x,l, \sqrt{t-l})} K(x,t; z, l ) dg_l (z) \bigg)^{-1} \frac{C}{(t-l)^{n/2}} \exp \left( - \frac{d^2_{l} ( x, y)}{8(t-l)}  \right).
\end{multline}
By result of Step 1, we find that there is a uniform constant $c = c(A) > 0$ such that
\[ K(x,t; \cdot, s) > \frac{c}{(t-s)^{n/2}} \qquad \text{on} \quad B(x, l, \sqrt{t-s}). \]
Moreover, by (\ref{eq:kappa1}), we have $|B(x,l, \sqrt{t-l}) |_l > \kappa_1 (t-l)^{-n/2}$.
So the integral on the right-hand side of (\ref{eq:HNresult}) can be bounded from below by a uniform constant and we obtain
\[ \frac{1}{ |B(y,  l, \sqrt{t-l})|_{l}} \int_{B(y, l, \sqrt{t-l})} K (x, t; z, l) dg_l (z) <  \frac{C}{(t-l)^{n/2}} \exp \left( - \frac{d^2_{l} ( x, y)}{8(t-l)}  \right). \]
Using (\ref{eq:kappa2}) and (\ref{dxyl=dxy0}) yields
\[  \int_{B(y, l, \sqrt{t-l})} K (x, t; z, l) dg_l (z) <  C \exp \left( {- \frac{d^2_{s} ( x, y)}{C (t-l)} } \right) \leq C \exp \left( {- \frac{d^2_{s} ( x, y)}{C (t-s)}}  \right). \]
Together with (\ref{GdiaJie}) we get
\[ \int_{B(y, l, \sqrt{t-l})} K^2(x, t; z, l) dg_l (z) < \frac{C}{(t-l)^{n/2}} \exp \left({ - \frac{d^2_s (x, y)}{C(t-s)}}
 \right). \] 
Integrating over the time-interval $[s, (s+ t)/2]$, we see that
\[
\int^{(s+t)/2}_s \int_{B(y, l, \sqrt{t-l})} K^2(x, t; z, l) dg_l (z) dl <
\frac{C (t-s)}{(t-s)^{n/2}} \exp \left( { -  \frac{d^2_s (x, y)}{C(t-s)} } \right).
\] 
Note that $t-l \ge (t-s)/2$ on the time-interval $[s, (s+t)/2]$. 
Hence for $r := A^{-1/2} \sqrt{(t-s)/2}$ we have
\[
 \int_{Q^+ (y, s, r)} K^2(x, t; z, l) dg_l(z) dl <
\frac{C}{(t-s)^{n/2-1}} \exp \left( {- \frac{d^2_s (x, y)}{C(t-s)} } \right).
\]
So by the mean value inequality from Lemma \ref{leCHEmv} we find
\[ K^2 (x,t; y, s) \leq \frac{C}{r^{n+2}} \int_{Q^+ (y, s, r)} K^2(x, t; z, l) dg_l(z) dl < \frac{C}{(t-s)^{n}} \exp \left( {- \frac{d^2_s (x, y)}{C(t-s)} } \right). \]
This implies the upper bound on $K(x,t; y,s)$.

\textit{Step 3.} 
We prove the  bound for $\nabla K$.

Fixing $(y, s)$, we regard $K(x,t) =K(x, t; y, s)$ as a solution to the forward heat equation.
According to (\ref{eq:Prel-derbound}), we have
\[
\frac{| \d_x K(x, t; y, s) |^2}{K^2(x, t; y, s)}
\le \frac{C}{t-s} \log \frac{J}{K(x, t; y, s)}.
\] 
Here $J$ is the maximum of $K(\cdot, \cdot ; y, s)$ on $\M \times [\frac{t+s}{2}, t]$.
Hence
\[
| \d_x K(x, t; y, s) |^2 \le  \frac{C}{t-s} \cdot  J K(x,t;y,s) \cdot \frac{K(x,t;y,s)}{J} \log \frac{J}{K(x,t;y,s)}
\le \frac{C}{t-s} \, J K(x,t; y,s).
\] 
By (\ref{GdiaJie})
\[
J \le \frac{C}{(t-s)^{n/2}}.
\]  
Combining this with the previous inequality, we deduce
\begin{multline*}
| \d_x K(x, t; y, s) |^2 \le  \frac{C}{(t-s)^{1+(n/2)}} K(x,t; y,s) \\
\leq \frac{C}{(t-s)^{1+(n/2)}} \frac{C_1}{(t-s)^{n/2}} \exp \Big( {- \frac{d^2_s (x, y)}{C_2 (t-s)}} \Big).
\end{multline*}
This yields the desired bound for $|\d_x K|$.
\end{proof}

\section{Backward pseudolocality} \label{sec:pseudoloc}
In the following lemma we derive bounds on the Ricci curvature and the time derivative of the curvature tensor that are better than expected, given bounds on the curvature and the scalar curvature on a parabolic neighborhood.
The bound on the Ricci curvature can also be found in \cite[Theorem 3.2]{Wa:1}.
We give a proof for completeness. 

\begin{lemma} \label{Lem:goodRicbound}
There are universal constants $C_0, C_1, \ldots < \infty$ such that the following holds:

Consider a Ricci flow $(\M, (g_t)_{t \in [0,T)})$.
Let $x_0 \in \M$, $t_0 \in [0,T)$ and $0 < r_0^2 <\min \{ 1, t_0 \}$.
Assume that the ball $B(x_0, t_0, r_0)$ is relatively compact and that we have the curvature bounds $|R| \leq n$ and $|{\Rm}| \leq r_0^{-2}$ on the parabolic neighborhood $P = P(x_0,t_0,r_0,-r_0^2)$.

Then $|{\Ric}| (x_0,t_0) < C_0 r_0^{-1}$ and $|\partial_t {\Rm}| (x_0,t_0) < C_0 r_0^{-3}$.
Moreover, we have the bounds $|\nabla^m {\Ric}| (x_0, t_0) < C_0 r_0^{-1-m}$ for all $m \geq 0$.
\end{lemma}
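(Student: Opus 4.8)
\emph{Setup and reduction.} The plan is to rescale parabolically so that $r_0=1$, use Shi's local derivative estimates to obtain bounded geometry, and then extract the ``extra'' decay of $\Ric$ from the \emph{scalar} curvature bound via the evolution equation $(\partial_t-\Delta)R=2|\Ric|^2$. After the rescaling, $|\Rm|\le 1$ on $P=P(x_0,t_0,1,-1)$ and $|R|\le\eps^2$ on $P$, where $\eps:=\sqrt n\,r_0\le\sqrt n$ and $r_0$ now denotes the original scale. By Shi's interior estimates, $|\nabla^m\Rm|\le C_m$ on $P(x_0,t_0,\tfrac34,-\tfrac34)$ for each $m$. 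In particular $\Ric\ge-(n-1)$, so by Bishop--Gromov the metric balls in this neighborhood are volume doubling with a dimensional constant and by Buser's inequality they satisfy a Poincar\'e inequality with a dimensional constant; consequently the (volume-normalized) local parabolic mean value inequality for subsolutions of the heat equation holds on these neighborhoods with \emph{dimensional} constants, uniformly in time (distances vary by at most a dimensional factor over the time interval). It suffices to prove $|\Ric|(x_0,t_0)\le C\eps$, $|\nabla^m\Ric|(x_0,t_0)\le C_m\eps$ and $|\partial_t\Rm|(x_0,t_0)\le C\eps$; undoing the rescaling then yields the stated bounds, since $\Ric$, $\nabla^m\Ric$ and $\partial_t\Rm$ carry scaling weights $r_0^{-2}$, $r_0^{-2-m}$ and $r_0^{-4}$ while $\eps\sim r_0$.

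\emph{An $L^2$ bound on $\Ric$.} Let $\phi=\chi(d_{t_0}(x_0,\cdot))\,\eta(t)$ be a cutoff, where $\chi$ cuts off a fixed-radius ball and $\eta$ cuts off a fixed time interval $[t_0-\sigma,t_0]$ from below; by the Laplacian comparison theorem (using $\Ric\ge-(n-1)$) and distance distortion, $|\nabla\phi|$, $|\Delta\phi|$ and $|\partial_t\phi|$ are bounded by dimensional constants (choosing the radii dimensionally so that $\phi\equiv1$ on $Q^-(x_0,t_0,\tfrac12)$). Multiplying $2|\Ric|^2=(\partial_t-\Delta)R$ by $\phi^2$, integrating over the parabolic region, integrating by parts twice in space to move $\Delta$ onto $\phi^2$, and using $\partial_t\,dg_t=-R\,dg_t$ together with $|R|\le\eps^2$ on $P$, every boundary and error term is bounded by $C\eps^2|B(x_0,t_0,1)|_{t_0}$ (here one also uses $\eps^2\le n$). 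Hence
\[ \int_{Q^-(x_0,t_0,1/2)}|\Ric|^2\,dg_t\,dt\ \le\ C\,\eps^2\,|B(x_0,t_0,1)|_{t_0}. \]

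\emph{Pointwise bounds, derivatives, and $\partial_t\Rm$.} Since the reaction terms in $(\partial_t-\Delta)|\Ric|^2$ are controlled by $C|\Rm|\,|\Ric|^2\le C|\Ric|^2$, the function $e^{-Ct}|\Ric|^2$ is a subsolution of the heat equation, and the mean value inequality of the first step, combined with volume doubling (which makes $|B(x_0,t_0,1)|_{t_0}/|Q^-(x_0,t_0,1/2)|$ dimensional), gives $|\Ric|^2(x_0,t_0)\le C\eps^2$. Applying this around every point yields $|\Ric|\le C\eps$ on $P(x_0,t_0,\tfrac14,-\tfrac14)$. For higher derivatives one iterates: $\nabla^m\Ric$ solves $(\partial_t-\Delta)(\nabla^m\Ric)=\sum_{a+b=m}\nabla^a\Rm\ast\nabla^b\Ric$, whose coefficients and their derivatives are bounded by the Shi estimates, so a Caccioppoli energy estimate propagates the $L^2$ bound $\int|\nabla^m\Ric|^2\le C_m\eps^2|B(x_0,t_0,1)|_{t_0}$, and the mean value inequality applied to $e^{-C_mt}|\nabla^m\Ric|^2$ gives $|\nabla^m\Ric|(x_0,t_0)\le C_m\eps$. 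Finally, from the variation of the Christoffel symbols $\partial_t\Gamma^k_{ij}=-g^{kl}(\nabla_iR_{jl}+\nabla_jR_{il}-\nabla_lR_{ij})$ one gets $\partial_tR^\ell_{ijk}=\nabla_i(\partial_t\Gamma^\ell_{jk})-\nabla_j(\partial_t\Gamma^\ell_{ik})$, so together with $\partial_tg=-2\Ric$ this yields $|\partial_t\Rm|\le C(|\nabla^2\Ric|+|\Rm|\,|\Ric|)\le C\eps$ at $(x_0,t_0)$. Undoing the rescaling finishes the proof.

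\emph{Main obstacle.} The heart of the argument is the second and third steps: recognizing that the \emph{scalar} curvature bound, via its own evolution equation, upgrades to an $L^2$ (hence, by parabolic regularity, pointwise) bound on the \emph{full} Ricci tensor that is a factor $r_0$ sharper than the naive bound $|\Ric|\le Cr_0^{-2}$. The delicate point is that the constants must be universal, so one cannot invoke Perelman's non-collapsing (or Proposition~\ref{prHEmv}, whose constants depend on $\nu[g_0,2T]$); this is circumvented by the observation that Bishop--Gromov doubling and Buser's Poincar\'e inequality already supply the volume-normalized parabolic mean value inequality with dimensional constants, and that the $L^2$ bound of the second step scales proportionally to the local volume, so the relevant average is collapsing-independent.
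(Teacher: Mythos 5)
Your proposal captures the paper's central idea exactly: use the evolution equation $(\partial_t-\Delta)R=2|\Ric|^2$ together with the bound $|R|\le nr_0^2$ to extract an $L^2$ bound on $\Ric$ that is a factor $r_0$ better than what $|\Rm|\le r_0^{-2}$ alone gives, and then upgrade to pointwise bounds by parabolic regularity applied to the (sub)solution $\Ric$. Where you diverge is in how the parabolic regularity step is made collapsing-independent. The paper pulls everything back through the exponential map based at $x_0$ to a Euclidean ball $B(0,\alpha)\subset\IR^n$, where (thanks to $|\Rm|\le1$ and Shi's estimates) the pullback metric is $2$-bilipschitz to the Euclidean metric with all derivatives bounded; one then takes a smooth Euclidean cutoff with a genuine $L^\infty$ bound on its Laplacian and applies standard parabolic theory in a chart. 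You instead work intrinsically, invoking Bishop--Gromov doubling and Buser's Poincar\'e inequality (both dimensional under $\Ric\ge-(n-1)$) to supply a volume-normalized parabolic mean value inequality, and observe, correctly, that since your $L^2$ bound on $\Ric$ scales with $|B(x_0,t_0,1)|_{t_0}$, the volume factors cancel in the normalized average. This is a legitimate alternative, and you correctly identified that neither $\nu$-dependent non-collapsing nor Proposition~\ref{prHEmv} can be used here.

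Two cautions, neither fatal. First, your statement that ``$|\Delta\phi|$ is bounded by dimensional constants'' via Laplacian comparison is imprecise: Laplacian comparison for a distance-based cutoff with $\chi'\le0$ gives only a one-sided pointwise bound on $\Delta\phi$ (there is no pointwise lower bound near the cut locus). What you actually need, and what does hold, is an $L^1$ bound: since $\Delta(\phi^2)$ is bounded above pointwise by a dimensional constant and $\int_\M\Delta(\phi^2)\,dg_t=0$, the negative part has the same $L^1$ size, so $\int|\Delta(\phi^2)|\,dg_t\le C|\supp\phi|_t$, which is exactly what the estimate on $\int R\,\Delta(\phi^2)$ requires. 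The paper's exponential-map chart sidesteps this entirely. Second, the parabolic mean value inequality you invoke is for the time-dependent operator $\partial_t-\Delta_{g_t}$ on an evolving family of metrics; the classical Grigor'yan/Saloff-Coste formulation is for a fixed metric. Under $|\Rm|\le1$ the metrics on $P(x_0,t_0,\tfrac34,-\tfrac34)$ are uniformly equivalent and the Moser iteration does go through (the extra term $-\int u^2\phi^2R\,dg_t$ from $\partial_t dg_t=-R\,dg_t$ is harmless), but this deserves a sentence of justification rather than a bare citation of the fixed-metric theory; again, the paper's reduction to a fixed Euclidean chart avoids the issue.
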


Note that by Shi's estimates (cf \cite{Sh:1}) we only have $|\partial_t {\Rm}| (x,t) < C r^{-4}$.
So the lemma shows that the exponent can be improved under an additional bound on the scalar curvature.

\begin{proof}
We first rescale the given Ricci flow parabolically, along with the radius $r_0$, such that $r_0 = 1$.
Then we have the bounds $|{\Rm}| \leq 1$ and $|R| \leq R_0 := n r_0^2$, for some $R_0 \leq n$, on $P(x_0, t_0, 1, -1)$.
The goal is then to show that $|\nabla^m {\Ric}| (x_0, t_0) < C_m R_0^{1/2}$ and $|\partial_t {\Rm}| (x_0,t_0) < C_0  R_0^{1/2}$.

Observe that by Shi's estimates (cf \cite{Sh:1}) there are universal constants $D_0, D_1, \ldots < \infty$ such that
\begin{equation} \label{eq:Shionr2}
 |{\nabla^m \Rm}| < D_m  \qquad \text{on} \qquad P (x_0, t_0, \tfrac12 , - \tfrac12)
\end{equation}
for all $m = 0,1, \ldots$.
Let $\alpha > 0$ be a small constant, whose value we will determine later, and consider the exponential map
\[ p : B(0,\alpha) \subset \mathbb{R}^n \longrightarrow B(x_0, t_0,\alpha ) \subset \M \]
based at $x_0$ with respect to the metric $g_{t_0}$.
Let $\widetilde{g}_t = p^* g_t$, $t \in [t_0 - \alpha^2, t_0]$ be the rescaled pull back of $g_t$.
Note that in our applications, due to the non-collapsing property (see (\ref{eq:kappa1})), we have a lower bound on the injectivity radius at $x_0$ and hence the map $p$ can be guaranteed to be injective for sufficiently small $\alpha$.
To keep this lemma more general, we will, however, not assume this property and instead proceed as follows:
By Jacobi field comparison and distance distortion estimates under the Ricci flow, we find that there is a universal choice for $\alpha$ such that the metric $\widetilde{g}_t$ is 2-bilipschitz to the Euclidean metric on $\mathbb{R}^n$ for all $t \in [ t_0 - \alpha^2, t_0]$.
Moreover, it follows from (\ref{eq:Shionr2}) that there are universal constants $\widetilde{D_0}, \widetilde{D}_1, \ldots < \infty$ such that
\[ |{\partial^m \widetilde{g}_t}| < \widetilde{D}_m  \qquad \text{on} \qquad B(0, \alpha ) \times [t_0 - \alpha^2 , t_0]. \]

Note that $\widetilde{g}_t$ is still a Ricci flow on $B(0, \alpha r)$, which still satisfies $|{\Rm}| \leq 1$ and $|R| \leq R_0$.
From now on, we will work with the metric $\widetilde{g}_t$ only.
Let now $\phi \in C_0^\infty (B(0, \alpha))$ be a cutoff function that satisfies $0 \leq \phi \leq 1$ everywhere and $\phi = 1$ on $B(0, \tfrac12 \alpha)$.
This cutoff function can be chosen such that $|\partial \phi|, |\partial^2 \phi|$ are bounded by some universal constant $C_1 < \infty$.
This implies that there is a universal constant $C_2 < \infty$ such that
\[ |{\Delta_{\widetilde{g}_t} \phi}| < C_2 \qquad \text{for all} \qquad t \in [t_0 - \alpha^2, t_0]. \]
We now make use of the evolution equation for the scalar curvature of $\widetilde{g}_t$:
\[ \partial_t R =  \Delta_{\widetilde{g}_t} R + 2 |{\Ric}|^2. \]
Integrating this equation against $\phi$ and using integration by parts yields
\begin{multline*}
 \bigg| \partial_t \int_{B(0, \alpha )} R(\cdot, t) \phi  d\widetilde{g}_t - \int_{B(0, \alpha )} 2 |{\Ric}(\cdot, t)|^2 \phi d\widetilde{g}_t \bigg| \\
 \leq C_2 \alpha^{-2}  \int_{B(0,\alpha )} |R(\cdot, t)|  d\widetilde{g}_t + \int_{B(0,\alpha )} |R(\cdot, t)|^2 \phi  d\widetilde{g}_t \\
  < C_3 R_0 + C_3 R_0^2 \leq 2 n C_3 R_0,
\end{multline*}
for some universal $C_3 < \infty$.
Integration in time gives us
\begin{multline*}
\int_{t_0 - \alpha^2}^{t_0} \int_{B(0, \alpha)} |{\Ric}|^2 \phi  d\widetilde{g}_t  dt 
< \int_{B(0, \alpha )} |R(\cdot, t_0)| \phi  d\widetilde{g}(t_0) \\
+ \int_{B(0, \alpha )} |R(\cdot, t_0 - \alpha^2 ) |\phi  d\widetilde{g}(t_0 - \alpha^2 ) + 2 C_3 R_0 \cdot \alpha^2
< C_4 R_0,
\end{multline*}
for some universal $C_4 < \infty$.
Thus, there is a universal $C_5 < \infty$ such that
\[ \Vert {\Ric} \Vert_{L^2 (B(0, \frac12 \alpha ) \times [t_0 - \alpha^2 r_0^2, t_0]))} < C_5 R_0^{1/2}. \]

Note that $\Ric$ satisfies the linear parabolic evolution equation
\begin{equation} \label{eq:Ricevolution}
 (\partial_t - \Delta_{\widetilde{g}_t} - 2 \Rm ) \Ric = 0.
\end{equation}
The coefficients of this equation are universally bounded in every $C^m$-norm.
Hence it follows from standard parabolic theory that for some universal $C_6 < \infty$
\[ |{\Ric}| (x_0, t_0) = |{\Ric}| (0,t_0)  < C_6 
\Vert {\Ric} \Vert_{L^2 (B(0, \frac12 \alpha ) \times [t_0 - \alpha^2, t_0]))} < C_6 C_5 R_0^{1/2}. \]
This establishes the first part of the lemma.

Similarly, or by applying the lemma at smaller scales, we obtain that for some universal constant $C_7 < \infty$
\[ |{\Ric}| < C_7 R_0^{1/2} \qquad \text{on} \qquad B(0,\tfrac14 \alpha) \times [t_0 - \tfrac12 \alpha^2, t_0]. \]
We can now apply the Schauder estimates on (\ref{eq:Ricevolution}) and obtain that for all $m \geq 0$
\[ |\nabla^m {\Ric}|  = |{\nabla^m \Ric}|(0, t_0)  < C'_m R_0^{1/2} \]
for some universal constants $C'_m < \infty$.
For the bound on $|\partial_t \Rm|(x_0, t_0)$ observe that at $(x_0, t_0)$ (compare also with \cite[Lemma 7.2]{Ha:0})
\[ \partial_t \Rm_{abcd} = \Ric_{ai} \Rm_{ibcd} + \Ric_{bi} \Rm_{aicd} 
+ \nabla_{ac} \Ric_{bd} + \nabla_{ad} \Ric_{bc} + \nabla_{bc} \Ric_{ad} - \nabla_{bd} \Ric_{ac}.  \]
So
\[ | {\partial_t \Rm} |(x_0, t_0) \leq C_8 |{\Ric (x_0, t_0)}| \cdot |{\Rm (x_0, t_0)}| + |{\nabla^2 \Ric (x_0,t_0)}| < C_0 R_0^{1/2} \cdot 1 + C'_2 R_0^{1/2}. \]
This finishes the proof of the lemma.
\end{proof} 

With this lemma in hand, we can prove the backwards pseudolocality theorem.

\begin{proof}[Proof of Theorem \ref{Thm:backwpseudoloc}]
First, note that by parabolic rescaling by $r_0^{-2}$ it suffices to prove the following statement:

\begin{claim}
Let $(\M^n, (g_t)_{t \in [0,T)}), 1 < T < \infty$ be a Ricci flow on a compact manifold.
Then there are constants $\varepsilon > 0$, $K < \infty$, which only depend on $\nu[g_0, 2T], n$, such that the following holds:

Let $(x_0, t_0) \in \M \times [1,T)$ and $0 < r_0 \leq 1$ and assume that
\[ |R| \leq n \qquad \text{on} \qquad P(x_0, t_0, r_0, - 2(\varepsilon r_0)^2) \]
and
\[ |{\Rm}(\cdot, t_0)| \leq r_0^{-2} \qquad \text{on} \qquad B(x_0, t_0, r_0). \]
Then
\[ |{\Rm}| < K r_0^{-2} \qquad \text{on} \qquad P(x_0, t_0, \varepsilon r_0, - (\varepsilon r_0)^2). \]
\end{claim}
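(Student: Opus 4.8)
The plan is to argue by contradiction, combining a Perelman‑type point‑selection argument with a blow‑up analysis. Suppose the Claim fails. Then there are sequences $\varepsilon_k\to 0$ and $K_k\to\infty$, Ricci flows $(\M_k,(g_{k,t}))$ on compact manifolds with $1<T_k<\infty$ and $\nu[g_{k,0},2T_k]$ uniformly bounded below, points $(x_{0,k},t_{0,k})\in\M_k\times[1,T_k)$ and radii $0<r_{0,k}\le 1$, satisfying $|R|\le n$ on $P(x_{0,k},t_{0,k},r_{0,k},-2(\varepsilon_k r_{0,k})^2)$ and $|{\Rm}(\cdot,t_{0,k})|\le r_{0,k}^{-2}$ on $B(x_{0,k},t_{0,k},r_{0,k})$, but with $\sup |{\Rm}|\ge K_k r_{0,k}^{-2}$ over $P(x_{0,k},t_{0,k},\varepsilon_k r_{0,k},-(\varepsilon_k r_{0,k})^2)$. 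Two facts will be used repeatedly. First, by the distance distortion estimate of Theorem \ref{thdistcon}, for $\varepsilon_k$ small every time slice of this small parabolic neighborhood sits inside $B(x_{0,k},t_{0,k},r_{0,k})$, so the curvature bound at time $t_{0,k}$ is available near every point with large curvature. Second, on any region carrying an a priori bound $|{\Rm}|\le\Lambda r_{0,k}^{-2}$ on a backward parabolic neighborhood, Lemma \ref{Lem:goodRicbound} applied at scale $\Lambda^{-1/2}r_{0,k}$ yields the much stronger ``gap'' bounds $|{\Ric}|\le C\Lambda^{1/2}r_{0,k}^{-1}$, $|\partial_t{\Rm}|\le C\Lambda^{3/2}r_{0,k}^{-3}$, and $|\nabla^m{\Ric}|\le C_m\Lambda^{(m+1)/2}r_{0,k}^{-1-m}$.

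Next I would run the point‑selection. Let $s_k\le t_{0,k}$ be the largest time at which $|{\Rm}|$ reaches $\tfrac12 K_k r_{0,k}^{-2}$ somewhere on the moving ball $B(x_{0,k},\cdot,2\varepsilon_k r_{0,k})$; by the bound at time $t_{0,k}$ this $s_k$ is well defined with $t_{0,k}-(\varepsilon_k r_{0,k})^2\le s_k<t_{0,k}$, and the curvature on that ball stays $\le K_k r_{0,k}^{-2}$ throughout $(s_k,t_{0,k}]$. Using the barrier cutoff function of Lemma \ref{Lem:secondcutoff} to keep the selection confined to the prescribed neighborhood, I would then pick $(y_k,s'_k)$ with $s'_k\le s_k$ and $Q_k:=|{\Rm}|(y_k,s'_k)$ satisfying $Q_k r_{0,k}^2\ge\tfrac12 K_k\to\infty$, such that $|{\Rm}|\le 4Q_k$ on a backward parabolic neighborhood $P(y_k,s'_k,A_k Q_k^{-1/2},-A_k Q_k^{-1})$ with $A_k\to\infty$ and $A_k Q_k^{-1/2}\ll r_{0,k}$. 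Now rescale by setting $\hat g_{k,t}:=Q_k\,g_{k,\,s'_k+Q_k^{-1}t}$. Then $|{\Rm}|_{\hat g_k}(y_k,0)=1$, $|{\Rm}|_{\hat g_k}\le 4$ on a backward parabolic neighborhood whose size tends to infinity, $|R|_{\hat g_k}\le nQ_k^{-1}\to 0$, and — since the unrescaled curvature near $y_k$ was $\le K_k r_{0,k}^{-2}\le 2Q_k$ on $(s_k,t_{0,k}]$ — the rescaled flow stays curvature‑bounded on the forward interval up to the rescaled time $\sigma_k:=(t_{0,k}-s'_k)Q_k$, where near $y_k$ it is $\le r_{0,k}^{-2}Q_k^{-1}\to 0$.

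By Perelman's no‑local‑collapsing theorem the metrics $\hat g_{k,0}$ are $\kappa$‑noncollapsed with $\kappa=\kappa(\nu[g_0,2T],n)$, so Hamilton's compactness theorem produces a subsequence converging in the pointed smooth Cheeger--Gromov sense to a complete pointed Ricci flow $(\M_\infty,(g_{\infty,t}),y_\infty)$ defined for $t\in(-\infty,\sigma_\infty]$, $\sigma_\infty\in[0,\infty]$, with $|{\Rm}|\le 4$, $|{\Rm}|_{g_{\infty,0}}(y_\infty)=1$, and $R_{g_\infty}\equiv 0$. From the evolution equation $\partial_t R=\Delta R+2|{\Ric}|^2$ the identity $R\equiv 0$ forces ${\Ric}_{g_\infty}\equiv 0$, so $g_{\infty,t}\equiv g_\infty$ is static and Ricci‑flat; by noncollapsing it is nonflat and $\kappa$‑noncollapsed at all scales. (Note that a static Ricci‑flat $\kappa$‑noncollapsed nonflat manifold is not by itself contradictory; the input from time $t_{0,k}$ is essential for the next step.)

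To reach a contradiction I would invoke the curvature bound at time $t_{0,k}$ one last time: near $y_\infty$, at the limiting time corresponding to $t_{0,k}$, the rescaled curvature tends to $\lim_k r_{0,k}^{-2}Q_k^{-1}=0$; since $g_\infty$ is static this forces $|{\Rm}|_{g_\infty}(y_\infty)=0$ (if $\sigma_\infty>0$ directly by staticness, and if $\sigma_\infty=0$ by continuity in time using the bound on $\partial_t{\Rm}$), contradicting $|{\Rm}|_{g_\infty}(y_\infty)=1$. This contradiction proves the Claim, hence Theorem \ref{Thm:backwpseudoloc}. I expect the main obstacle to be the point‑selection step together with the bookkeeping it requires: one must verify that the selected blow‑up point, and its shrinking neighborhood, stays inside $B(x_{0,k},t_{0,k},r_{0,k})$ at time $t_{0,k}$ and inside the region where the scalar curvature bound holds, so that both the time‑$t_{0,k}$ curvature bound and the forward curvature control genuinely persist to the limit. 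This is precisely where Theorem \ref{thdistcon} and the barrier cutoff function of Lemma \ref{Lem:secondcutoff} do the work, and where one must track how the scales $\varepsilon_k r_{0,k}$, $Q_k^{-1/2}$ and $r_{0,k}$ compare; extracting the limit also relies essentially on Perelman's no‑local‑collapsing theorem, since no injectivity radius bound on the initial data is assumed.
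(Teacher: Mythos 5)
Your proposal takes a genuinely different route from the paper: you argue by contradiction via a Perelman-type point selection, parabolic rescaling, Hamilton--Cheeger--Gromov compactness, and an analysis of the resulting static Ricci-flat blow-up limit. The paper instead runs a direct barrier argument: it produces the cutoff $\phi$ of Lemma \ref{Lem:secondcutoff} with $\partial_t\phi>r_0^{-1}$, looks at the first time $\overline t$ at which $|{\Rm}|^{-1/2}$ touches $\phi$ from above, and at the touching point uses an induction on scales together with Lemma \ref{Lem:goodRicbound} (the improved bound $|\partial_t{\Rm}|\lesssim r^{-3}$ rather than the Shi bound $r^{-4}$) to show that $|{\Rm}|^{-1/2}$ cannot decrease as fast as $\phi$ increases, yielding a contradiction. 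This requires no compactness, no injectivity radius argument, and produces explicit constants.

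The blow-up argument as you have sketched it has a genuine gap at the final step. To obtain a contradiction with $|{\Rm}|_{g_\infty}(y_\infty)=1$ you must transport the rescaled time-$t_{0,k}$ curvature bound $r_{0,k}^{-2}Q_k^{-1}\to 0$ down to time $0$ at the basepoint of the limit. This requires two things you have not established: (i) that the rescaled elapsed time $\sigma_k=(t_{0,k}-s'_k)Q_k$ stays bounded (or at least that the rescaled flow has uniformly bounded curvature on a region around $y_k$ for the full interval $[0,\sigma_k]$); and (ii) that $y_k$ remains inside $B(x_{0,k},t_{0,k},r_{0,k})$ so that the time-$t_{0,k}$ bound applies near it. For (i), the selected $Q_k$ is only bounded below by $\tfrac12 K_k r_{0,k}^{-2}$; the point selection can push $Q_k$ far above this, and $t_{0,k}-s'_k$ is only bounded above by $2(\varepsilon_k r_{0,k})^2$, so $\sigma_k$ has no a priori upper bound. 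If $\sigma_k\to\infty$ with no uniform forward curvature control, the limit is merely an ancient flow (or one on $(-\infty,\sigma_\infty)$ with the end time escaping), and a static, Ricci-flat, $\kappa$-noncollapsed, non-flat ALE manifold --- e.g., Eguchi--Hanson in dimension $4$ --- is perfectly consistent with everything you derived, so there is no contradiction. (You correctly note this parenthetically but do not resolve it.) The forward curvature bound you assert on $(s_k,t_{0,k}]$ lives on the moving ball around $x_{0,k}$, not around $y_k$; once the point selection moves $y_k$ away from $x_{0,k}$ --- and at smaller times the distance distortion estimates degrade --- this bound is not obviously available near $y_k$. Producing such a forward bound is essentially what backward pseudolocality asserts, so without additional input the argument is circular. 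This is precisely why Chen--Wang's version of backward pseudolocality (\cite{CW:1}) imposes an extra uniform $L^{n/2}$ curvature bound: that normalization rules out the nonflat ALE limits and makes the blow-up strategy viable. The paper's barrier argument with the induction on scales avoids all of this, and also removes the need for the Cheeger--Gromov compactness machinery (and the injectivity-radius bookkeeping it entails). One further remark: your invocation of Lemma \ref{Lem:secondcutoff} as a tool "to keep the selection confined" is not explained and is not how that cutoff is used; its essential feature $\partial_t\phi>r_0^{-1}$ is designed for the paper's maximum-principle comparison, not for a point-picking scheme.
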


Note that the theorem follows from the claim if we set $r_0 = 1$.
The lower bound $R \geq -n$ on $P(x_0, t_0, 1, - (2 (\varepsilon r_0)^2) \subset \M \times [t_0 - \frac12, t_0]$ follows as explained in section \ref{sec:Preliminaries} (here we have assumed that $\varepsilon < \frac12$).
Next choose $1 > r_* > 0$, such that
\begin{equation} \label{eq:choiceofr1}
4 C_0 \rho^{-3} <  r_*^{-1}.
\end{equation}
Observe that in order to prove the claim, it suffices to prove the claim for the case in which $r \leq r_*$, as the case $r > r_*$ follows from the case $r = r_*$ by adjusting the constants $\varepsilon, K$ to $\varepsilon r_*$, $K r_*^{-2}$, respectively.
So let us in the following assume that
\[ r_0 \leq r_*. \]

We now choose the constants $\varepsilon, K$ as follows:
\[  \varepsilon := \rho  \qquad \text{and} \qquad K := \varepsilon^{-2} =  \rho^{-2} .  \]

Before we carry out the main proof, we use a point-picking argument to show that, without loss of generality, we can assume that the claim is already true on smaller scales $r_0$ and earlier times $t_0$.
In other words, the proof will utilize ``induction'' on the scale $r_0$:
The claim is obviously true for some very small scale $r_0 > 0$, which may depend on the given Ricci flow, since the curvature on $\M \times [0,t_0]$ is always bounded by some (non-uniform) constant.
In the following proof we will show that if the claim holds at all scales $0 < r \leq \frac{1}2 r_0$, then it also holds for scale $r_0$.
So by induction, the claim is true for all scales $r_0 > 0$.

Let us be more specific now.
We claim that we can impose the following extra assumption:

{\it For any $t \in [t_0 - \frac12 r_0^2, t_0]$, $x \in \M$ and $r \leq \frac12 r_0$ the statement of the claim holds, meaning that}
\begin{multline} \label{eq:extraassumptionpp}
 \textit{If}  \quad |R| \leq n \quad \text{on} \quad P(x,t,r, - 2 (\varepsilon r)^2) \quad \text{and} \quad  |{\Rm (\cdot, t) }| < r^{-2} \quad \textit{on} \quad B(x, t, r), \\
  \quad \textit{then} \quad |{\Rm}| < K r^{-2} \quad \textit{on} \quad P (x,t, \varepsilon r, - (\varepsilon r)^2).
\end{multline}
We will now explain why it suffices to show the claim under this extra assumption.
Assume by contradiction that the claim fails for some choice of $(x_0, t_0, r_0)$, $r_0 \leq r_*$.
So the extra assumption (\ref{eq:extraassumptionpp}) has to fail as well and we can find some $(x, t, r)$ with $t \in [t_0 - \frac12 r_0^2, t_0]$, $r \leq \frac12 r_0$ that violates (\ref{eq:extraassumptionpp}).
In other words, the claim also fails for $(x_0, t_0, r_0) \leftarrow (x_1, t_1, r_1) := (x,t,r)$, which implies that the extra assumption fails as well in this setting.
We can hence choose another triple $(x,t,r)$ with $t \in [t_1 - \frac12 r_1^2, t_1]$ and $r \leq \frac12 r_1$ that violates (\ref{eq:extraassumptionpp}).
Set $(x_2, t_2, r_2) := (x,t,r)$.
Repeating this process, we end up with an infinite sequence $(x_0, t_0, r_0), (x_1, t_1, r_1), \ldots$ such that (\ref{eq:extraassumptionpp}) is violated for $(x, t, r) \leftarrow (x_i, t_i, r_i)$ for all $i = 0, 1, 2, \ldots$.
Moreover $t_{i+1} \in [t_i - \frac12 r_i^2, t_i]$ and $r_{i+1} \leq \frac12 r_i$.
So $t_{i+1} \in [t_0 - r_0^2, t_0]$, $t_{i+1} \geq 2^{-i} t_0$ and $r_i \leq 2^{-i} r_0 \leq \sqrt{t_i}$.
However, since $|{\Rm}|$ is bounded by some (non-universal) constant on $\M \times [0,t_0]$, we obtain that (\ref{eq:extraassumptionpp}) has to be true for some large $i$, giving us the desired contradiction.

We now present the main argument, namely, proving the statement of the claim under the extra assumption  (\ref{eq:extraassumptionpp}).
Invoke Lemma \ref{Lem:secondcutoff} for $x_0 \leftarrow x_0$, $t_0 \leftarrow t_0$, $r_0 \leftarrow  r_0$ and $\tau \leftarrow 2 (\rho r_0)^2$ to obtain the cutoff function $\phi \in C^0 ( \M \times [ t_0 - 2 \varepsilon^2 r_0^2, t_0 ])$ satisfying the desired properties (a)--(e) of that lemma.

By property (a)
\[ |{\Rm} (\cdot, t_0) |^{-1/2} \geq r_0 > \phi (\cdot, t_0). \]
Choose $\overline{t} \in [t_0 - \varepsilon^2 r_0^2, t_0)$ minimal with the property that
\begin{equation} \label{eq:defoftbar}
 |{\Rm}|^{-1/2} \geq \phi \qquad \text{on} \qquad  \M \times [\overline{t}, t_0].
\end{equation}
We will show that $\overline{t} = t_0 -  \varepsilon^2 r_0^2$.
Assume that, on the contrary, $\overline{t} > t_0 -  \varepsilon^2 r_0^2$.
Then there is a point $\overline{x} \in \M$ such that
\begin{equation} \label{eq:defofxbar}
 |{\Rm(\overline{x}, \overline{t})}|^{-1/2} = \phi (\overline{x}, \overline{t}).
\end{equation}
Set
\begin{equation} \label{eq:defofovr}
 \overline{r} := \tfrac12 |{\Rm} (\overline{x}, \overline{t}) |^{-1/2} = \tfrac12 \phi (\overline{x}, \overline{t}) < \tfrac12 r_0.
\end{equation}
Then, since $\phi (\cdot, \overline{t})$ is $1$-Lipschitz with respect to the metric $g_{\overline{t}}$, we have
\begin{equation} \label{eq:subcurvbound}
 |{\Rm(\cdot, \overline{t})}|^{-1/2} \geq \phi (\cdot, \overline{t}) > \tfrac12 \phi (\overline{x}, \overline{t}) = \overline{r} \qquad \text{on} \qquad B(\overline{x}, \overline{t}, \overline{r}).
\end{equation}
So by Lemma \ref{Lem:secondcutoff}(c) we have $B(\overline{x}, \overline{t}, \overline{r}) \subset B(x_0, t_0, r_0)$, which implies that
\begin{equation} \label{eq:subscalbound}
 |R| \leq n  \qquad \text{on} \qquad P(\overline{x}, \overline{t}, \overline{r}, - 2 (\varepsilon \overline{r})^2).
\end{equation}

The bounds (\ref{eq:subcurvbound}) and (\ref{eq:subscalbound}) give us the right to apply the extra assumption (\ref{eq:extraassumptionpp}) for $x \leftarrow \overline{x}$, $t \leftarrow \overline{t}$ and $r \leftarrow \overline{r}$ to conclude that
\[ |{\Rm}| < K \overline{r}^{-2} = \varepsilon^{-2} \overline{r}^{-2} \qquad \text{on} \qquad P(\overline{x}, \overline{t}, \varepsilon \overline{r}, - (\varepsilon \overline{r})^2). \]
Then by Lemma \ref{Lem:goodRicbound} for $r_0 \leftarrow \varepsilon \overline{r}$ we have
\[ |\partial_t | {\Rm (\overline{x}, \overline{t})}| | < C_0 \varepsilon^{-3} \overline{r}^{-3}. \]
And thus by the definition of $\overline{r}$ (see equation (\ref{eq:defofovr}))
\[ |\partial_t |{\Rm (\overline{x}, \overline{t})}|^{-1/2} | < \tfrac12 C_0 \varepsilon^{-3} \overline{r}^{-3} |{\Rm (\overline{x}, \overline{t} )}|^{-3/2} = 4 C_0 \varepsilon^{-3} .  \]
Using (\ref{eq:defoftbar}), (\ref{eq:defofxbar}) and property (e) of $\phi$ (compare with Lemma \ref{Lem:secondcutoff}) we conclude however
\[ \partial_t |{\Rm}|^{-1/2} (\overline{x}, \overline{t} ) \geq \partial_t \phi (\overline{x}, \overline{t}) >  r_0^{-1} \geq r_*^{-1}. \]
So
\[ 4 C_0 \varepsilon^{-3} >  r_*^{-1} . \]
This inequality contradicts our choice of $r_*$ (see inequality (\ref{eq:choiceofr1})), hence showing that our assumption $\overline{t} > t_0 -  \varepsilon^2 r_0^2$ was wrong.

So $\overline{t} = t_0 -  \varepsilon^2 r_0^2$, which implies
\[ |{\Rm}|^{-1/2} \geq \phi \qquad \text{on} \qquad \M \times [t_0 - \varepsilon^2 r_0^2, t_0]. \]
Thus we can use property (b) of $\phi$ to conclude that
\[ |{\Rm}| \leq \phi^{-2} < \rho^{-2}  r_0^{-2} = K r_0^{-2}  \qquad \text{on} \qquad P(x_0, t_0, \rho  r_0, -  \varepsilon^2 r_0^2). \]
Since $\varepsilon = \rho$, this finishes the proof.
\end{proof}

Combining the backward pseudolocality theorem, Theorem \ref{Thm:backwpseudoloc}, with Perelman's forward pseudolocality Theorem gives us an improved pseudolocality result in both time directions.

\begin{proof}[Proof of Corollary \ref{costrongreg}]
By parabolic rescaling we may assume that $r_0 = 1$.
This implies, as explained in section \ref{sec:Preliminaries}, that $t_0 \geq 1$, $T - t_0 \geq 1$ and $|R| \leq n$ on $B(x_0, t_0, r_0) \times [t_0 - 2 \varepsilon^2, t_0 + \varepsilon^2]$.

Let $\delta > 0$ be the constant from Perelman's pseudolocality theorem \cite[Theorem 10.1]{P:1} for $\alpha \leftarrow 1$ and let $\eps > 0$ be less than the minimum of the corresponding constants in the backwards pseudolocality Theorem \ref{Thm:backwpseudoloc} and Perelman's pseudolocality theorem, again for $\alpha \leftarrow 1$.
Moreover, by Theorem \ref{thdistcon}, we may choose $\varepsilon$ small enough such that
\[ B(x_0, t, \varepsilon) \subset B(x_0, t_0, 1) \qquad \text{for all} \qquad t \in [t_0, t_0 + \varepsilon^2]. \]
By Perelman's pseudolocality theorem, we have for all $t \in (t_0, t_0 + \eps^2]$ and $x \in B( x_0, t, \eps )$
\[ |{\Rm}|(x,t) \leq (t-t_0)^{-1} + \eps^{-2}. \]
For any $t \in (t_0, t_0 +  \eps^2]$ and $x \in B(x_0, t, \frac12 \eps )$ we can now apply Theorem \ref{Thm:backwpseudoloc} with $r_0 \leftarrow \frac12 (t - t_0)^{1/2} \leq \frac12 \eps$ and obtain that
\begin{equation} \label{eq:Rmboundonsmallball}
 |{\Rm}| < K \big( \tfrac12 (t - t_0)^{1/2} \big)^{-2} \qquad \text{on} \qquad 
 P\big( x,t, \tfrac12 \eps (t - t_0)^{1/2}, - \big(\tfrac12 \eps  (t - t_0)^{1/2} \big)^2 \big).
\end{equation}
Here $K$ is the constant from Theorem \ref{Thm:backwpseudoloc}.
Next, using Lemma \ref{Lem:goodRicbound}, we can find a constant $K_* < \infty$, which only depends on $\nu[g_0, 2T], n$, such that:
\begin{multline} \label{eq:RicanddtRm}
|{\Rm}| (x,t) < K_* (t-t_0)^{-1}, \qquad |{\Ric}|(x,t) < K_* (t-t_0)^{-1/2} \qquad \\ \text{and} \qquad |\partial_t {\Rm} |(x,t) < K_* (t-t_0)^{-3/2}.
\end{multline}
The first inequality is just a restatement of (\ref{eq:Rmboundonsmallball}).

Choose $\eps_* := \min \{ \eps, (16 K_*)^{-1} \eps \}$.
We claim that for all $t \in [t_0, t_0 + \eps_*^2 ]$, we have
\begin{equation} \label{eq:ballintwiceball}
B(x_0, t, \tfrac18 \eps ) \subset B(x_0, t_0, \tfrac14 \eps ) \subset B(x_0, t, \tfrac12 \eps ).
\end{equation}
Assume that the second inclusion does not hold for all $t \in [t_0, t_0 + \eps_*^2 ]$.
Choose $\overline{t} \in [t_0, t_0 + \eps_*^2 )$ maximal such that it holds for all $t \in [t_0, \overline{t}]$.
Then there is a point $\overline{x} \in \overline{B(x_0, t_0, \frac14 \eps  )}$ such that $d_{\overline{t}}(\overline{x}, x_0) = \frac12 \eps $.
For any $t \in [t_0, \overline{t}]$, the point $\overline{x}$ is contained in the closure of $B(x_, t, \frac12 \eps )$ and thus, by the discussion in the previous paragraph, at every time $t \in [t_0, \overline{t}]$, all points on a minimizing geodesic between $\overline{x}$ and $x_0$ satisfy (\ref{eq:RicanddtRm}).
It follows that (assuming $\eps < 1$)
\begin{equation} \label{eq:distdistortiont-12}
  |{\partial_t d_t (\overline{x}, x_0)}| <  K_* (t - t_0)^{-1/2} d_t ( \overline{x}, x_0 ) < K_* (t - t_0)^{-1/2}.
\end{equation}
Integrating this inequality from $t_0$ to $\overline{t}$ yields a contradiction:
\[ \tfrac12 \eps  = d_{\overline{t}} (\overline{x}, x_0)  < d_{t_0} (\overline{x}, x_0) + 2 K_* (\overline{t} - t_0)^{1/2} < \tfrac14 \eps  + 2 K_* \eps_*  < \tfrac12 \eps . \]
In order to show the first inclusion of (\ref{eq:ballintwiceball}), we choose $\overline{t} \in [t_0, t_0 + \eps_*^2]$ maximal such that (\ref{eq:ballintwiceball}) holds for all $t \in [t_0, \overline{t}]$.
If $\overline{t} < t_0 + \eps_*^2$, then we can find a point $\overline{x} \in \overline{B(x_0, \overline{t}, \frac18 \eps)}$ with $d_{t_0} (\overline{x}, x_0) = \frac14 \eps$.
Integrating (\ref{eq:distdistortiont-12}) yields again a contradiction:
\[ \tfrac14 \eps = d_{t_0} (\overline{x}, x_0) < d_{\overline{t}} (\overline{x}, x_0) + 2 K_* (\overline{t} - t_0)^{1/2} < \tfrac18 \eps + 2 K_* \eps_*  < \tfrac14 \eps. \]

In view of the discussion in the beginning of the proof, the second inclusion in (\ref{eq:ballintwiceball}) implies that the curvature and curvature derivative estimates in (\ref{eq:RicanddtRm}) hold on the parabolic neighborhood $P(x_0, t_0, \frac14 \eps, \eps_*^2)$.
This fact allows us to integrate the bound on $|\partial_t {\Rm}|$ from $t_0 + \eps_*^2$ to any $t \in [t_0, t_0 + \eps_*^2]$ and obtain that for all $x \in B(x_0, t_0, \tfrac14 \eps)$
\begin{multline*}
 |{\Rm}| (x,t) \leq |{\Rm}| (x, t_0 + \eps_*^2) + \int_t^{t_0 + \eps_*^2} |\partial_t {\Rm}| (x, t') dt' \\
 < K_* \eps_*^{-2}  + \int_t^{t_0 + \eps_*^2}K_* (t'-t_0)^{-3/2} dt'
 < K_* \eps_*^{-2}  + 2 K_* (t - t_0)^{-1/2} \\
 < 10 K_* \eps_*^{-1}  (t-t_0)^{-1/2}.
\end{multline*}
So by the first inclusion in (\ref{eq:ballintwiceball}) we find that for all $t \in [t_0, t_0 + \eps_*^2]$ we have
\[ |{\Rm}| (\cdot, t) < 10 K_* \eps_*^{-1}  (t-t_0)^{-1/2} \qquad \text{on} \qquad B(x_0, t, \tfrac18 \eps ). \]

We now apply the backward pseudolocality theorem, Theorem \ref{Thm:backwpseudoloc}, once again.
Let us first choose $\beta := (10 K_* \eps_*^{-1} )^{-1/2}$ and $\theta := (\frac18 \eps)^4$.
Then for any $t \in [t_0, t_0 + \theta ]$, we have $\beta  (t-t_0)^{1/4} \leq \frac18 \eps$.
So for any $t \in [t_0, t_0 + \theta ]$
\[ |{\Rm}| (\cdot, t) < \big( \beta (t-t_0)^{1/4} \big)^{-2} \qquad \text{on} \qquad B \big( x_0, t, \beta  (t-t_0)^{1/4} \big). \]
Theorem \ref{Thm:backwpseudoloc} then yields
\begin{equation} \label{eq:boundatsomet}
 |{\Rm}| < K \beta^{-2}  (t-t_0)^{-1/2} \qquad \text{on} \qquad P \big( x_0, t, \eps \beta  (t-t_0)^{1/4}, - \eps^2 \beta^2  (t-t_0)^{1/2} \big).
\end{equation}
We now choose $0 < \theta_0 < \theta$ uniformly such that
\[  \eps^2 \beta^2 \theta_0^{1/2} > 2 \theta_0. \]
Applying (\ref{eq:boundatsomet}) with $t = t_0 + \theta_0$ yields
\[ |{\Rm}| < K \beta^{-2} \theta_0^{-1/2}  \qquad \text{on} \qquad P(x_0, t_0 + \theta_0 , \theta_0^{1/2}, - 2 \theta_0 ). \]
So we obtain a uniform curvature bound in a parabolic neighborhood that intersects the time-slice $t$.
Due to this curvature bound, distances within this parabolic neighborhood only change up to a fixed factor, which implies that the parabolic neighborhood contains a ball $B(x_0, t_0, \varepsilon')$ at time $t_0$ for some uniform $\varepsilon' > 0$.
This finishes the proof of the corollary. 
\end{proof}

\section{$L^2$ curvature bound in dimension 4} \label{sec:L2bound}
In this section we consider Ricci flows in dimension $4$.
We use the backwards pseudolocality theorem to derive an $L^2$ bound on the Riemannian curvature and an $L^p$ bound on the Ricci curvature for $0 < p < 4$.
Using these bounds, we eventually establish Corollary \ref{Cor:convergenceorbifold}.

We will need the following lemma.

\begin{lemma} \label{Lem:picay}
Consider a Ricci flow $(\M^4, (g_t)_{t \in [0,T)})$ on a compact $4$-dimensional manifold.
Suppose that $|R| \leq 1$ everywhere.
Then there is a constant $\delta > 0$, which only depends on $\nu[g_0, 2T], n$, such that the following holds:

Let $(x, t) \in \M \times [0,T)$ and assume that $\rrm^2 (x, t) \leq t$ (recall that $\rrm$ was defined in Definition \ref{Def:rho}).
Then there is a $y \in \M$ such that
\begin{enumerate}[label=(\alph*)]
\item $d_t (x,y) < 2 \rrm (x, t)$,
\item $\rrm (y, t) \leq \rrm (x, t)$,
\item $\int_{B(y, t, \frac16 \rrm (y,t))} |{\Rm}|^2 dg_t > \delta$.
\end{enumerate}
\end{lemma}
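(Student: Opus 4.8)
The plan is to produce the point $y$ by a point‑selection argument and then to verify (a)--(c) directly: (a) and (b) will be immediate from the selection, while (c) will follow from the backward pseudolocality theorem together with Shi's estimates and no‑local‑collapsing. After a parabolic rescaling we may assume $\rrm(x,t)=1$; the hypothesis $\rrm^2(x,t)\le t$ guarantees that $t\ge 1$ after rescaling and that the scalar curvature remains bounded at unit scale (after adjusting the dependence of the constants on $\nu[g_0,2T]$), so that the estimates of section~\ref{sec:Preliminaries} and Theorem~\ref{Thm:backwpseudoloc} may be invoked at unit scale below.

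Since the flow is not flat, $\rrm(x,t)$ is finite, so by the definition of $\rrm$ and compactness of the time slice there is a point $w_0$ with $d_t(x,w_0)\le 1$ and $|{\Rm}|(w_0,t)\ge 1$. Starting from $w_0$, I would run the usual point‑selection: having chosen $w_i$, set $Q_i:=|{\Rm}|(w_i,t)$; if there is a point $w'$ with $d_t(w_i,w')\le \tfrac18 Q_i^{-1/2}$ and $|{\Rm}|(w',t)\ge 2Q_i$, put $w_{i+1}:=w'$, and otherwise stop and set $y:=w_i$. Each step at least doubles the curvature at the chosen point while $|{\Rm}|$ is bounded on the compact time slice, so the process terminates. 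At the final point $y$, writing $Q:=|{\Rm}|(y,t)$, we have $|{\Rm}|(\cdot,t)<2Q$ on $B(y,t,\tfrac18 Q^{-1/2})$; since $(\tfrac18 Q^{-1/2})^{-2}=64Q>2Q$ while $|{\Rm}|(y,t)=Q$, this forces $\tfrac18 Q^{-1/2}\le \rrm(y,t)\le Q^{-1/2}$, hence $\tfrac1{64}\rrm(y,t)^{-2}\le |{\Rm}|(y,t)$ and, since $Q\ge Q_0\ge 1$, also $\rrm(y,t)\le Q^{-1/2}\le 1=\rrm(x,t)$, which is (b). Summing displacements, $d_t(w_0,y)\le \tfrac18\sum_{i\ge 0}2^{-i/2}<\tfrac12$, so $d_t(x,y)<\tfrac32<2\rrm(x,t)$, which is (a). The essential gain of the selection is that $|{\Rm}|(y,t)$ is now comparable to $\rrm(y,t)^{-2}$, not merely bounded by it.

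For (c), observe that by the definition of $\rrm(y,t)$ we have $|{\Rm}(\cdot,t)|<\rrm(y,t)^{-2}$ on $B(y,t,\rrm(y,t))$, while $R\le 1\le \rrm(y,t)^{-2}$ on $P(y,t,\rrm(y,t),-2(\varepsilon\rrm(y,t))^2)$, which lies in $\M\times[0,T)$ because $t\ge 1$. Hence Theorem~\ref{Thm:backwpseudoloc} applies at $(y,t)$ with $r_0\leftarrow \rrm(y,t)$ and yields a uniform bound $|{\Rm}|<K\rrm(y,t)^{-2}$ on $P(y,t,\varepsilon\rrm(y,t),-(\varepsilon\rrm(y,t))^2)$. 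Shi's estimates \cite{Sh:1} on this parabolic neighborhood then give $|{\nabla\Rm}|(\cdot,t)<C_1\rrm(y,t)^{-3}$ on $B(y,t,\tfrac12\varepsilon\rrm(y,t))$ at the final time $t$, for a uniform $C_1$. Combining this with $|{\Rm}|(y,t)\ge \tfrac1{64}\rrm(y,t)^{-2}$ gives $|{\Rm}|(\cdot,t)>\tfrac1{128}\rrm(y,t)^{-2}$ on $B(y,t,r_*\rrm(y,t))$, where $r_*:=\min\{\tfrac12\varepsilon,(128C_1)^{-1},\tfrac16\}\in(0,\tfrac16]$ depends only on $\nu[g_0,2T]$ and $n$. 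Since $R$ is bounded on this ball and $r_*\rrm(y,t)\le \rrm(y,t)\le 1\le\sqrt t$, no‑local‑collapsing (\ref{eq:kappa1}) gives $|B(y,t,r_*\rrm(y,t))|_t\ge \kappa_1(r_*\rrm(y,t))^4$, whence
\[
 \int_{B(y,t,\frac16\rrm(y,t))}|{\Rm}|^2\,dg_t\ \ge\ \int_{B(y,t,r_*\rrm(y,t))}|{\Rm}|^2\,dg_t\ >\ \frac{\kappa_1 r_*^4}{128^2}\ =:\ \delta\ >\ 0 ,
\]
which is (c); here $\delta$ depends only on $\nu[g_0,2T]$ and $n$, and since $\int_{\M}|{\Rm}|^2\,dg$ is scale invariant in dimension $4$, the initial rescaling does not affect its value.

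The main obstacle is (c): one must rule out the possibility that essentially all of the curvature is concentrated in an arbitrarily small subset of $B(y,t,\tfrac16\rrm(y,t))$. The point‑selection is arranged precisely so that $|{\Rm}|(y,t)$ is comparable to $\rrm(y,t)^{-2}$; feeding this into backward pseudolocality and Shi's estimates provides the quantitative regularity needed to spread the curvature lower bound over a ball of definite (rescaled) radius, after which no‑local‑collapsing converts the pointwise bound into the $L^2$ bound. A secondary technical point, flagged above, is the bookkeeping of the scalar‑curvature scale under the initial rescaling, which is exactly what the hypothesis $\rrm^2(x,t)\le t$ is there to control.
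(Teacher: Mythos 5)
Your proof is correct and follows the same strategy as the paper's: a point-selection argument locates a nearby $y$ at which $|{\Rm}|(y,t)$ is comparable to $\rrm^{-2}(y,t)$, and then the backward pseudolocality theorem, Shi's estimates, and no-local-collapsing are used to spread this curvature lower bound over a ball of a definite fraction of $\rrm(y,t)$, producing (c), with (a) and (b) read off directly from the selection. The only cosmetic differences are the flavor of the point-picking (a Perelman curvature-doubling argument in your version versus a $\rrm$-based iteration in the paper that stops once $\rrm(y_{k+1},t)>\tfrac1{10}\rrm(y_k,t)$) and the preliminary rescaling to $\rrm(x,t)=1$, which the paper avoids by working directly at the original scale and applying Theorem~\ref{Thm:backwpseudoloc} with $r_0=\rrm(y,t)$; note that in either formulation one is implicitly using $\rrm(y,t)\le 1$ so that $R\le 1\le\rrm^{-2}(y,t)$, which holds in the application (there $\rrm(x,t)<\overline{r}\le 1$) but is not quite what the hypothesis $\rrm^2(x,t)\le t$ is ``there to control'' — that hypothesis is used to keep the parabolic neighborhood inside $\M\times[0,T)$ and to satisfy $r_0\le\sqrt{t_0}$.
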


\begin{proof}
We will choose $y$ by a point-picking process.
Construct a sequence $y_1, y_2, \ldots \in \M$ by the following inductive process:
Let $y_0 := x$.
If $y_k$, $k \geq 0$, is already chosen, then by the definition of $\rrm$ there is a point $y' \in \overline{B} (y_k, t, \rrm (y_k, t))$ for which $|{\Rm}| (y', t) = \rrm^{-2} (y_k, t)$.
Set $y_{k+1} := y'$.
Summarizing, we have
\[ d_t (y_{k+1}, y_k) \leq \rrm (y_k, t) \qquad \text{and} \qquad |{\Rm}| (y_{k+1}, t) = \rrm^{-2} (y_k, t) \]
for all $k = 0,1, \ldots$.
So
\[ \rrm^{-2} (y_k, t) = |{\Rm}| (y_{k+1}, t)  \leq \rrm^{-2} (y_{k+1}, t), \]
which implies that
\begin{equation} \label{eq:smallerandsmaller}
 \rrm (x,t) = \rrm (y_0, t) \geq \rrm (y_1, t) \geq \rrm (y_2, t) \geq \ldots . 
\end{equation}

Next, observe that the sequence $\rrm (y_k, t)$ is bounded from below, because $|{\Rm}| (\cdot, t)$ is bounded on $\M$.
This implies that for some $k = 0, 1, \ldots$ we have
\begin{equation} \label{eq:rrmnotlessthan10}
 \rrm (y_{k+1}, t) > \tfrac1{10} \rrm (y_k, t),
\end{equation}
because otherwise the sequence $\rrm (y_k, t)$ would exponentially converge to $0$.
Amongst those $k = 0,1, \ldots$ that satisfy (\ref{eq:rrmnotlessthan10}) choose the smallest $k$ with this property, i.e.
\[ \rrm (y_1, t) \leq \tfrac1{10} \rrm (y_0, t), \qquad \ldots, \qquad \rrm (y_k, t) \leq \tfrac1{10} \rrm (y_{k-1}, t) . \]
We now set $y := y_{k+1}$.

We will show that $y$ satisfies assertions (a)--(c).
Assertion (b) holds due to (\ref{eq:smallerandsmaller}).
For assertion (a) we estimate
\begin{multline*}
d_t (x, y_{k+1} ) \leq d_t (y_0, y_1)  + \ldots + d_t (y_k, y_{k+1}) \leq \rrm (y_0, t) + \dots + \rrm (y_k, t) \\
< \big(1 + \tfrac1{10} + \ldots + \tfrac1{10^k} \big) \rrm (x, t) < 2 \rrm (x,t).
\end{multline*}

It remains to establish assertion (c).
For this observe first that by the inductive choice of $y_{k+1}$ and (\ref{eq:rrmnotlessthan10}), we have
\begin{equation} \label{eq:yk1large}
 |{\Rm}| (y_{k+1}, t) = \rrm^{-2} (y_k, t) > \frac1{100} \rrm^{-2} (y_{k+1}, t).
\end{equation}
If we apply the backward pseudolocality theorem, Theorem \ref{Thm:backwpseudoloc}, at $y = y_{k+1}$, then we obtain that
\[ |{\Rm}| < K \rrm^{-2} (y,t) \qquad \text{on} \qquad P(y,t, \varepsilon \rrm (y,t), - \varepsilon^2 \rrm^2 (y,t)). \]
Hence by Shi's estimates there is a constant $C < \infty$ such that
\[  |\nabla {\Rm} (\cdot, t)| < C \rrm^{-3} (y, t) \qquad \text{on} \qquad B(y, t, \tfrac12 \varepsilon \rrm (y,t)). \]
Integrating this bound and using (\ref{eq:yk1large}) yields with $\beta := \min \{ \frac1{200} C^{-1}, \tfrac12 \varepsilon \}$.
\[ |{\Rm} (\cdot, t) | > \frac1{200} \rrm^{-2} (y,t) \qquad \text{on} \qquad B(y, t, \beta \rrm (y,t)). \]
It follows that
\begin{multline*}
 \int_{B(y, t, \frac16 \rrm (y,t))} |{\Rm}|^2 (\cdot, t) dg_t \geq \int_{B(y, t, \beta \rrm (y, t))} |{\Rm}|^2 (\cdot, t) dg_t \\ > \Big( \frac1{200} \rrm^{-2} (y,t) \Big)^2 \cdot \kappa_1 \big( \beta \rrm (y,t) \big)^4 = \frac{\kappa_1 \beta^4}{200^2} =: \delta > 0. 
\end{multline*}
So assertion (c) holds.
This proves the desired result.
\end{proof}

\begin{proof}[Proof of Theorem \ref{Thm:L2bound}.]
First note that by parabolic rescaling we may assume that $R_0 = 1$.

Let $\overline{r} > 0$ be a parameter whose value we will fix in the course of the proof, depending only on $T$ and $\nu [ g_0, 2T ]$.
For the rest of this proof fix some $t \in [T/2, T)$.

Let $\delta > 0$ be the constant from Lemma \ref{Lem:picay} and set
\[ S := \Big\{ y \in \M \;\; : \;\; \rrm (y,t) < \overline{r} \qquad \text{and} \qquad \int_{B(y, t, \frac16 \rrm (y,t))} |{\Rm}|^2 > \delta \Big\}. \]
Now consider all balls $B(y, t, \frac16 \rrm (y,t))$ for $y \in S$.
By Vitali's Covering Theorem there are finitely many points $y_1, \ldots, y_N \in S$ such that for
\[ r_i := \rrm (y_i, t) < \overline{r} \]
the balls $B(y, t, \tfrac16 r_i)$ are pairwise disjoint and
\begin{equation} \label{eq:Vitali}
 \bigcup_{i=1}^N B(y_i, t, \tfrac12 r_i) \supset \bigcup_{y \in S} B(y, t, \tfrac16 \rrm (y,t)) \supset S.
\end{equation}

\begin{claim}
For every $x \in \M$ with $\rrm (x,t) < \overline{r}$ there is an $i \in \{ 1, \ldots, N \}$ for which
\[ d_t (x, y_i) < 2 \rrm (x,t) + \tfrac12 r_i. \]
\end{claim}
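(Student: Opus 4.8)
The plan is to deduce the claim directly from Lemma \ref{Lem:picay} and the Vitali covering inclusion (\ref{eq:Vitali}), so the proof will be essentially a bookkeeping argument. Before fixing $x$, I would record a smallness constraint on the parameter $\overline{r}$: since we are working with $t \in [T/2,T)$ and have rescaled so that $R_0 = 1$ (hence $|R| \le 1$ everywhere), choosing $\overline{r}$ small enough that $\overline{r}^2 \le T/2$ guarantees that any $x \in \M$ with $\rrm(x,t) < \overline{r}$ automatically satisfies $\rrm^2(x,t) < \overline{r}^2 \le T/2 \le t$, which is precisely the hypothesis needed to apply Lemma \ref{Lem:picay} at $(x,t)$.

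Next I would fix such an $x$ and invoke Lemma \ref{Lem:picay} to produce a point $y \in \M$ with the three properties: (a) $d_t(x,y) < 2\rrm(x,t)$; (b) $\rrm(y,t) \le \rrm(x,t)$; and (c) $\int_{B(y,t,\frac16 \rrm(y,t))} |\Rm|^2 dg_t > \delta$, where $\delta$ is the same constant used in the definition of $S$. Property (b) together with $\rrm(x,t) < \overline{r}$ yields $\rrm(y,t) < \overline{r}$, and combined with (c) this shows that $y \in S$.

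Then I would apply the covering property (\ref{eq:Vitali}): since $y \in S \subset \bigcup_{i=1}^N B(y_i, t, \tfrac12 r_i)$, there is an index $i \in \{1,\dots,N\}$ with $d_t(y, y_i) < \tfrac12 r_i$. Combining this with property (a) via the triangle inequality gives $d_t(x, y_i) \le d_t(x,y) + d_t(y,y_i) < 2\rrm(x,t) + \tfrac12 r_i$, which is exactly the asserted bound.

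There is no genuine obstacle here; the statement is an immediate consequence of the point-picking lemma and the Vitali selection. The only point requiring a moment's attention is making sure the size condition $\rrm^2(x,t) \le t$ holds so that Lemma \ref{Lem:picay} is applicable, which is why the bound $\overline{r}^2 \le T/2$ must be imposed on $\overline{r}$ at the outset; this is consistent with $\overline{r}$ depending only on $T$ and $\nu[g_0,2T]$.
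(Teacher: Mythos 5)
Your proof is correct and follows essentially the same route as the paper: invoke Lemma \ref{Lem:picay} to produce $y$, observe $y \in S$, apply the Vitali covering (\ref{eq:Vitali}) to find $y_i$, and finish with the triangle inequality. Your observation about imposing $\overline{r}^2 \le T/2$ to secure the hypothesis $\rrm^2(x,t) \le t$ of Lemma \ref{Lem:picay} is a sound point of hygiene that the paper only addresses implicitly (it later assumes $\overline{r} < \sqrt{T/2}$), so this is a welcome clarification rather than a deviation.
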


\begin{proof}
Let $x \in \M$ with $\rrm (x,t) < \overline{r}$.
By Lemma \ref{Lem:picay} there is a $y \in \M$ with $d_t (x,y) < 2 \rrm(x,t)$ and $\rrm (y,t) \leq \rrm (x,t) < \overline{r}$ that satisfies the lower bound on the integral in the definition of $S$.
So altogether $y \in S$.
By (\ref{eq:Vitali}) there is an index $i \in \{ 1, \ldots, N \}$ for which $y \in B(y_i, t, \tfrac12 r_i)$.
So we conclude
\[ d_t (x, y_i) \leq d_t (x, y) + d_t( y, y_i) < 2 \rrm (x,t) + \tfrac12 r_i. \]
This finishes the proof of the claim.
\end{proof}

Next observe that for any $p > 0$, by Fubini's Theorem
\begin{multline} \label{eq:coareaformu}
 \int_{\M} \rrm^{-p} (x, t) dg_t(x) = \int_{\M} \bigg( \overline{r}^{-p} + \int_{\rrm (x, t)}^{\overline{r}} p s^{-p-1}   ds  \bigg) dg_t(x) \\
 \leq \overline{r}^{-p} \vol_t \M + \int_0^{\overline{r}} p s^{-p-1} | \{ \rrm (\cdot, t) \leq s \} |_t  ds.
\end{multline}

We will now split the sub level sets inside the integral on the right-hand side into $N$ different domains, depending on which of the expressions $d_t( \cdot, y_i) - \frac12 r_i$ is minimal.
To do this set for $i = 1, \ldots, N$
\begin{multline*}
 D_i := \{ x \in \M \;\; : \;\; \rrm (x,t) \leq \overline{r} \qquad  \text{and} \\ \qquad d_t(x, y_i) - \tfrac12 r_i \leq d_t (x, y_j) - \tfrac12 r_j \qquad \text{for all} \qquad j = 1, \ldots, N \}.
\end{multline*}
Then
\begin{equation} \label{eq:DicoverB}
  \{ \rrm (\cdot, t) \leq \overline{r} \} = D_1 \cup \ldots \cup D_N. 
\end{equation}
It follows from the Claim that for all $x \in D_i$
\[ \rrm (x,t) > \tfrac12 ( d_t (x, y_i) - \tfrac12 r_i ) . \]
So if $d_t (x, y_i) \geq \tfrac34 r_i$, then $\rrm (x,t) > \frac12 (d_t (x, y_i) - \frac12 \cdot \frac43 d_t (x, y_i)) > \tfrac16 d_t (x, y_i)$.
On the other hand, by the definition of $\rrm$, we obtain that if $d_t (x, y_i ) < \tfrac34 \rrm (y_i, t) = \tfrac34 r_i$, then $\rrm (x, t) > \tfrac14 r_i > \frac13 d_t (x, y_i) > \frac16 d_t (x, y_i)$.
Hence, in either case
\begin{equation} \label{eq:rrm>16d}
 \rrm (x, t) > \tfrac16 d_t (x, y_i) \qquad \text{if} \qquad x \in D_i. 
\end{equation}
Using this inequality, we can now compute that for any $s \leq \overline{r}$
\begin{multline} \label{eq:areabound}
 | (\{ \rrm (\cdot, t) \leq s \} |_t \leq \sum_{i = 1}^N | \{ \rrm (\cdot, t) \leq s \} \cap D_i |_t \\
 \leq \sum_{i = 1}^N | \{ d_t (\cdot, y_i) \leq 6 s \} \cap D_i |_t \leq \sum_{i=1}^N | B( y_i, t, 6s) |_t \leq (6^4 \cdot \kappa_2) \cdot N s^4.
\end{multline}
So using (\ref{eq:coareaformu}), we obtain for $0 < p < 4$
\begin{multline}
\label{eq:NdependentLponrrm}  
 \int_M \rrm^{-p} (x, t)  dg_t(x) \leq \overline{r}^{-p} \vol_t M + \int_0^{\overline r} p s^{-p - 1} | \{ \rrm (\cdot, t) < s \} |_t  ds \\
 \leq \overline{r}^{-p} \vol_t M  + C N p \int_0^{\overline{r}} s^{3-p} ds \leq \overline{r}^{-p} \vol_t M + CN \frac{p}{4-p} \overline{r}^{4-p}.
\end{multline}
Here $C < \infty$ is a constant, which only depends on $\kappa_2$.

Next, assume that $\overline{r} < \min \{ 1, \sqrt{T/2} \}$ and observe that by Theorem \ref{Thm:backwpseudoloc} and Lemma \ref{Lem:goodRicbound} there is a universal constant $C' < \infty$ such that for all $x \in \M$ for which $\rrm (x,t) \leq \overline{r}$, we have
\[ |{\Ric}|(x,t) \leq C' \rrm^{-1} (x,t). \]
So by (\ref{eq:NdependentLponrrm}) for $p = 2$, we get
\begin{equation} \label{eq:RicbounddependingonN}
 \int_{\M} |{\Ric} (x, t) |^2  dg_t(x) \leq C^{\prime 2} \int_{\M} \rrm^{-2} (x, t) dg_t(x) \leq C^{\prime 2} \overline{r}^{-2} \vol_t M + C C^{\prime 2} N \overline{r}^{2}.
\end{equation}

On the other hand, recall that by the choice of the $y_i \in S$ we have that for all $i = 1, \ldots, N$
\[ \int_{B(y_i, t, \frac16 r_i)} |{\Rm}( x,t)|^2  dg_t(x) > \delta \]
and the domains of these integrals are pairwise disjoint.
So
\begin{equation} \label{eq:RmbounddependingonN}
 \int_{\M} |{\Rm}(x, t)|^{2} dg_t(x) > N \delta.
\end{equation}

Finally, we apply the Theorem of Chern-Gau\ss-Bonnet:
\begin{equation} \label{eq:ChernGaussBonnet}
 \int_{\M} |{\Rm}(x,t)|^2  dg_t(x) = 32 \pi^2 \chi(M) + \int_{\M} \big( 4 |{\Ric} (x,t)|^2 - R^2 (x,t) \big) dg_t(x).
\end{equation}
Combining this identity with (\ref{eq:RicbounddependingonN}) and (\ref{eq:RmbounddependingonN}) gives us
\[ N \delta < 32 \pi^2 \chi(M) + 4 C^{\prime 2} \overline{r}^{-2} \vol_t M + 4 C C^{\prime 2} N \overline{r}^2. \]
We now choose $\overline{r} > 0$ small enough such that
\[ 4 C C^{\prime 2} \overline{r}^2 < \tfrac12 \delta. \]
Then
\begin{equation} \label{eq:Nboundfinal}
  N < \frac{32 \pi^2 \chi(M) + 4 C^{\prime 2} \overline{r}^{-2} \vol_t M}{\frac12 \delta} = A \chi(M) + B \vol_t M.
\end{equation}

The theorem now follows using the previous inequalities:
The $L^p$-bound on $\Ric$ follows from (\ref{eq:NdependentLponrrm}) together with (\ref{eq:Nboundfinal}).
The $L^2$-bound on $\Rm$ is a consequence of this bound for $p=2$ and (\ref{eq:ChernGaussBonnet}).
And the last statement follows from (\ref{eq:areabound}) and (\ref{eq:Nboundfinal}).
Note that the case $\overline{r} \leq s \leq 1$ follows by adjusting $A$ and $B$.
\end{proof}

The remainder of this section is devoted to the proof of Corollary \ref{Cor:convergenceorbifold}.
For the following we fix a Ricci flow $(\M^4, (g_t)_{t \in [0,T)})$, $T < \infty$ on a $4$-dimensional manifold and assume that $R \leq R_0 < \infty$ everywhere.
Let us first summarize all the estimates that we know so far.
By (\ref{eq:Prel-volelementdistortion}) we have $C_1^{-1} dg_{t_1} < dg_{t_2} < C_1 dg_{t_1}$ for any $t_1, t_2 \in [0, T)$ and by (\ref{eq:Prel-voldistortion})
\[ C_2^{-1} < \vol_t \M < C_2. \]
We also have the non-collapsing and non-inflating properties (\ref{eq:kappa1}), (\ref{eq:kappa2}) for some uniform $\kappa_1, \kappa_2 > 0$.
These properties, combined with the upper and lower volume bound, imply a uniform upper and lower diameter bound.
\[ C_3^{-1} < \diam_t \M < C_3. \]
We will also use the following two consequences of Theorem \ref{Thm:L2bound}:
\begin{equation} \label{eq:RmL2}
 \int_{\M} |{\Rm}(\cdot, t_k)|^2 < C_4
\end{equation}
and for any $0 < s \leq 1$
\begin{equation} \label{eq:rrmvolboundC2}
\frac{| \{ \rrm(\cdot, t) \leq s \}|_{t}}{s^4} < C_5.
\end{equation}

Finally, we mention a consequence of the proof of Theorem \ref{Thm:L2bound}.

\begin{lemma} \label{Lem:yit}
Let $(\M^4, (g_t)_{t \in [0,T)})$, $T < \infty$ be a Ricci flow on a $4$-dimensional manifold and assume that $R \leq R_0 < \infty$ everywhere.
Then there is a natural number $N \in \mathbb{N}$ such that for every time $t \in [0,T)$, we can find points $y_{1,t}, \ldots, y_{N,t} \in \M$ such that for any $x \in \M$ we have
\begin{equation} \label{eq:rrm>16dagain}
 \rrm (x,t) > \tfrac16 \min_{i = 1, \ldots, N} d_t (x, y_{i,t}).
\end{equation}
\end{lemma}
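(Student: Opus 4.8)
The statement is essentially the covering construction hidden inside the proof of Theorem~\ref{Thm:L2bound}, repackaged and extended from $t\in[T/2,T)$ to all times. The plan is: after a parabolic rescaling normalize $R_0=1$, so $R\le 1$ everywhere, and treat $t\in[T/2,T)$ and $t\in[0,T/2)$ separately. In each case the family $y_{1,t},\dots,y_{N,t}$ will consist of two groups: an \emph{adaptive} family handling the region where $\rrm(\cdot,t)$ is small, whose cardinality is bounded uniformly via Chern--Gauss--Bonnet and the $L^2$ curvature bound (exactly as in the proof of Theorem~\ref{Thm:L2bound}; the key point is that Lemma~\ref{Lem:picay} forces each small-reach ball to carry $L^2$-curvature mass $>\delta$, so arbitrarily small values of $\rrm$ cost nothing in the count), and a \emph{fixed-scale net} handling the complementary ``large reach'' region, whose cardinality is bounded by a volume comparison.

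For $t\in[T/2,T)$ I would fix $\overline r$ with $\overline r<\min\{1,\sqrt{T/2}\}$ and run the first part of the proof of Theorem~\ref{Thm:L2bound} verbatim: the Vitali covering of the set $S$, the points $y_1,\dots,y_{N_1}$, the decomposition into the $D_i$, and inequality~(\ref{eq:rrm>16d}). This produces points $y_1,\dots,y_{N_1}$ with $N_1<A\chi(\M)+B\vol_t\M\le A\chi(\M)+BC_2$ (so $N_1$ is independent of $t$), such that $\rrm(x,t)>\tfrac16 d_t(x,y_i)$ for some $i$ whenever $\rrm(x,t)\le\overline r$. To handle the points $x$ with $\rrm(x,t)>\overline r$, I would adjoin a maximal $\overline r$-separated subset $z_1,\dots,z_{M_1}$ of $(\M,g_t)$: the balls $B(z_j,t,\tfrac12\overline r)$ are pairwise disjoint, and since $R\le 1<(\tfrac12\overline r)^{-2}$ and $\tfrac12\overline r<\sqrt t$, the no-local-collapsing estimate~(\ref{eq:kappa1}) gives $|B(z_j,t,\tfrac12\overline r)|_t>\kappa_1(\tfrac12\overline r)^4$, so summing against $\vol_t\M<C_2$ bounds $M_1$ independently of $t$. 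Every $x$ with $\rrm(x,t)>\overline r$ lies within $\overline r$ of some $z_j$, hence $\tfrac16\min_j d_t(x,z_j)<\tfrac16\overline r<\overline r<\rrm(x,t)$. In either case $\rrm(x,t)>\tfrac16\min_{p} d_t(x,p)$, the minimum taken over the $N_1+M_1$ points $\{y_i\}\cup\{z_j\}$, so this family works for all $t\in[T/2,T)$.

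For $t\in[0,T/2)$ curvature is bounded: set $\Lambda:=\max_{\M\times[0,T/2]}|\Rm|<\infty$ and $c_0:=\Lambda^{-1/2}>0$, so $\rrm(x,t)\ge c_0$ for all $x\in\M$, $t\le T/2$. By compactness there is $C\ge 1$ with $C^{-1}g_0\le g_t\le Cg_0$ on $\M$ for all $t\in[0,T/2]$, hence $d_t\le\sqrt C\,d_0$. Fix once and for all finitely many points $w_1,\dots,w_{M_0}\in\M$ whose $g_0$-balls $B(w_j,0,5c_0C^{-1/2})$ cover $\M$; then for every $x$ and every $t<T/2$ there is $j$ with $d_0(x,w_j)<5c_0C^{-1/2}$, so $\tfrac16\min_j d_t(x,w_j)\le\tfrac16 d_t(x,w_j)<c_0\le\rrm(x,t)$, and the \emph{same} points $w_1,\dots,w_{M_0}$ serve for all $t\in[0,T/2)$. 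Finally set $N:=\max\{N_1+M_1,\ M_0\}$ and, for a given $t$, pad the shorter of the two lists with a repeated point, which leaves $\min_i d_t(x,y_{i,t})$ unchanged. The only real subtlety is that one cannot run the Theorem~\ref{Thm:L2bound} argument uniformly down to $t=0$ because the constraint $\overline r\le\sqrt t$ degenerates there; this is precisely what forces the separate, flow-dependent (but $t$-independent) compactness argument near $t=0$. Apart from that, the one new ingredient compared with the proof of Theorem~\ref{Thm:L2bound} is the auxiliary net covering the large-reach region, whose finiteness is immediate from no-local-collapsing and the uniform volume bound, so I do not anticipate any serious obstacle.
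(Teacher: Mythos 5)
Your proof is correct and rests on the same core machinery as the paper's (the Vitali covering from the proof of Theorem~\ref{Thm:L2bound}, the point-picking Lemma~\ref{Lem:picay}, and inequality~(\ref{eq:rrm>16d})), but it globalizes differently. The paper's proof is terser: it reruns the Theorem~\ref{Thm:L2bound} construction with the adaptive choice $\overline{r}=\max_{\M}\rrm(\cdot,t)$, so that $D_1\cup\dots\cup D_N=\M$ and the Vitali points $y_i$ alone suffice, and then bounds $N$ by combining~(\ref{eq:RmbounddependingonN}) with the $L^2$ bound~(\ref{eq:RmL2}). You instead keep $\overline{r}$ fixed and small (as in Theorem~\ref{Thm:L2bound} itself), adjoin a maximal $\overline{r}$-separated net to cover the complementary large-reach set $\{\rrm>\overline{r}\}$ via no-local-collapsing, and treat $t\in[0,T/2)$ by a separate compactness/fixed-net argument. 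Your decomposition is the more careful one at two points that the paper's short argument leaves implicit: with $\overline{r}=\max_{\M}\rrm$ the hypothesis $\rrm^2(x,t)\le t$ of Lemma~\ref{Lem:picay} can fail when $t$ is small or $\rrm$ is large, and~(\ref{eq:RmL2}) is a consequence of Theorem~\ref{Thm:L2bound}, which is only stated for $t\in[T/2,T)$. You correctly identify the degeneracy of $\overline{r}\le\sqrt t$ near $t=0$ and resolve it; the price is a slightly larger (but still uniform in $t$) value of $N$ and a few more moving parts.
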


\begin{proof}
The lemma is a consequence of the proof of Theorem \ref{Thm:L2bound}.
Consider the points $y_1, \ldots, y_N$ and the radii $r_i = \rrm (y_i, t)$ as constructed in this proof before equation (\ref{eq:Vitali}) for $\overline{r} = \max_{\M} \rrm (\cdot, t)$.
Then by (\ref{eq:DicoverB}) we have $D_1 \cup \ldots \cup D_N = \M$.
So (\ref{eq:rrm>16d}) implies (\ref{eq:rrm>16dagain}).

By (\ref{eq:RmbounddependingonN}) and (\ref{eq:RmL2}) we get that $N$ is bounded by a constant, which is uniform in time.
\end{proof}

The following definition will help us understand the formation of singularities.

\begin{definition}
Let $(\M, (g_t)_{t \in [0,T)})$, $T < \infty$ be a Ricci flow.
A pointed, complete metric space $(X, d, x_\infty)$, $x_\infty \in X$ is called a \emph{limit} of $(\M, (g_t)_{t \in [0,T)})$ if there is a sequence of times $t_k \nearrow T$, a sequence of points $x_k \in \M$ and a sequence of numbers $\lambda_k \geq 1$ such that the length metrics of $(\M, \lambda_k^2 g_{t_k}, x_k)$ converge to $(X, d, x_\infty)$ in the Gromov-Hausdorff sense.
If $\lim_{k \to \infty} \lambda_k = \infty$, then $(X, d, x_\infty)$ is called a \emph{blowup limit}.
\end{definition}

Due to the non-collapsing and non-inflating properties, (\ref{eq:kappa1}), (\ref{eq:kappa2}), (blowup) limits always have to exist:

\begin{lemma}
Let $(\M^4, (g_t)_{t \in [0,T)})$, $T < \infty$ be a Ricci flow on a $4$-dimensional manifold and assume that $R \leq R_0 < \infty$ everywhere.
Consider arbitrary sequences $t_k \nearrow T$, $x_k \in \M$ and $\lambda_k \geq 1$.
Then, after passing to a subsequence, $(\M, \lambda_k^2 g_{t_k}, x_k)$ converges to a complete limit $(X, d, x_\infty)$ in the Gromov-Hausdorff sense.
\end{lemma}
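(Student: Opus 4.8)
The plan is to verify the hypothesis of Gromov's precompactness theorem for pointed metric spaces: it suffices to produce, for every $R>0$ and $\delta>0$, a constant $N=N(R,\delta)<\infty$ independent of $k$ such that $B(x_k,\lambda_k^2 g_{t_k},R)$ can be covered by at most $N$ balls of radius $\delta$ with respect to $\lambda_k^2 g_{t_k}$. Once this is established, a diagonal argument extracts a subsequence converging in the pointed Gromov--Hausdorff sense to a pointed metric space $(X,d,x_\infty)$, which is automatically a complete length space since each $(\M,\lambda_k^2 g_{t_k})$ is.

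The first step is to record uniform local volume bounds. Since $t_k\nearrow T$ and $T>0$, we may discard finitely many indices and assume $t_k\ge T/2$. The maximum principle gives $R(\cdot,t)\ge -n/(2t)$, so together with $R\le R_0$ we get $|R|\le\bar R:=\max\{R_0,2n/T\}$ on $\M\times[T/4,T)$. Let $\alpha\in(0,1)$ be the constant from the non-inflating estimate (\ref{eq:kappa2}) and put $s_1:=\min\{\sqrt{\alpha}\,\bar R^{-1/2},\tfrac12\sqrt T\}$. For $x\in\M$, $t_k\ge T/2$ and $0<s\le s_1$ the cube $Q^-(x,t_k,s)$ lies in $\M\times[T/4,T)$, where $R<s^{-2}$ on $B(x,t_k,s)$ and $R\le\alpha/(t_k-t)$ on $Q^-(x,t_k,s)$; hence (\ref{eq:kappa1}) and (\ref{eq:kappa2}) yield
\[ \kappa_1 s^n\le|B(x,t_k,s)|_{t_k}\le\kappa_2 s^n, \]
with $\kappa_1,\kappa_2$ depending only on $\nu[g_0,2T]$ and $n$. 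Writing $h_k:=\lambda_k^2 g_{t_k}$ and using $B(x,h_k,r)=B(x,t_k,r/\lambda_k)$, $|\,\cdot\,|_{h_k}=\lambda_k^n|\,\cdot\,|_{t_k}$, and $\lambda_k\ge1$, this upgrades to
\[ \kappa_1 r^n\le|B(x,h_k,r)|_{h_k}\le\kappa_2 r^n\qquad\text{for all }x\in\M,\ 0<r\le s_1. \]
Thus every rescaled metric $h_k$ carries the same two-sided Euclidean volume bounds on the fixed scale range $(0,s_1]$.

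The second step converts these volume bounds into the covering estimate, using that $h_k$ is a length metric. A standard packing count from the volume bounds gives a uniform constant $M_0=M_0(\kappa_1,\kappa_2,n)$ such that any ball of radius $\tfrac34 s_1$ in $(\M,h_k)$ is covered by at most $M_0$ balls of radius $\tfrac14 s_1$. Letting $N_k(\varrho)$ denote the minimal number of $\tfrac14 s_1$-balls covering $B(x_k,h_k,\varrho)$, one has $N_k(\varrho)\le M_0$ for $\varrho\le\tfrac34 s_1$, and one pushes the radius outward in increments of $\tfrac14 s_1$: given a cover of $B(x_k,h_k,\varrho)$ by balls $B(q_i,\tfrac14 s_1)$, every point $y$ of $B(x_k,h_k,\varrho+\tfrac14 s_1)$ is either already in one of these balls, or (by the length property) joined to $x_k$ by a path whose point at distance $\varrho-\tfrac18 s_1$ from $x_k$ lies in some $B(q_i,\tfrac14 s_1)$ and within $\tfrac38 s_1$ of $y$; in either case $B(x_k,h_k,\varrho+\tfrac14 s_1)\subset\bigcup_i B(q_i,\tfrac58 s_1)$, and re-covering each $B(q_i,\tfrac58 s_1)$ by $\tfrac14 s_1$-balls (again via $M_0$) gives $N_k(\varrho+\tfrac14 s_1)\le M_0\,N_k(\varrho)$. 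Iterating yields $N_k(R)\le M_0^{\lceil 4R/s_1\rceil+1}$, uniformly in $k$; finally each $\tfrac14 s_1$-ball is covered by at most $C(\kappa_1,\kappa_2,n)(s_1/\delta)^n$ balls of radius $\delta$ for $\delta\le s_1$ (for $\delta>s_1$ use $\delta=s_1$), producing the required $k$-independent $N(R,\delta)$.

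The main obstacle is exactly this packing step: since no lower bound on the Ricci or full curvature tensor is assumed, Bishop--Gromov comparison is unavailable, so the covering numbers must be extracted solely from the uniform two-sided volume control at the single fixed scale $s_1$ together with the length-space property, via the radius-pushing recursion above. The remaining ingredients — the reduction to $t_k\ge T/2$, the application of (\ref{eq:kappa1}) and (\ref{eq:kappa2}), the scaling behaviour of volumes, and the passage from uniform covering numbers to a convergent subsequence with complete length-space limit — are routine.
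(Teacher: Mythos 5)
Your argument is correct, and it is the intended one: the paper states this lemma without a written proof, preceding it only with the one-line remark that it follows ``due to the non-collapsing and non-inflating properties (\ref{eq:kappa1}), (\ref{eq:kappa2}),'' so you have simply supplied the details behind that remark. You correctly extract from (\ref{eq:kappa1}) and (\ref{eq:kappa2}) a two-sided Euclidean volume bound at all scales $\le s_1$ at times $t_k \ge T/2$ (the lower bound on $R$ needed to invoke these comes for free from the maximum principle, as you note), and the hypothesis $\lambda_k \ge 1$ is used exactly where it should be — to ensure these bounds persist after rescaling on the same fixed scale range $(0, s_1]$. The real content of your write-up is the observation that Bishop--Gromov is unavailable without a Ricci lower bound, and your substitute — a packing constant $M_0$ at the single scale $s_1$ combined with the length-space recursion that propagates covering bounds outward by increments of $\tfrac14 s_1$ — is a correct and clean way to obtain the uniform covering numbers required for Gromov precompactness. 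The completeness of the limit then follows, as you say, from properness of the approximating spaces and the uniform covering control. This matches what the paper implicitly invokes; you have made it rigorous.
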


We will now analyze (blowup) limits more carefully.
The next lemma states that such limits are Riemannian manifolds away from finitely many points.

\begin{lemma} \label{Lem:Xsmoothonreg}
Let $(\M^4, (g_t)_{t \in [0,T)})$, $T < \infty$ be a Ricci flow on a $4$-dimensional manifold and assume that $R \leq R_0 < \infty$ everywhere.
Let $(X, d, x_\infty)$ be a limit of $(\M^4, (g_t)_{t \in [0,T)})$.
Then $\diam (X, d) > 0$ and there are points $y_{1,\infty}, \ldots, y_{N,\infty} \in X$ such that the metric $d$ restricted to $X \setminus \{ y_{1,\infty}, \ldots, y_{N,\infty} \}$ is induced by a smooth Riemannian metric $g_\infty$.
Moreover, for all $x \in X$
\[ |{\Rm}_{g_\infty} |(x) \leq 36 \max_{i = 1, \ldots, N} d^{-2} (x, y_{i,\infty}) \]
and
\begin{equation} \label{eq:RmXL2}
 \Vert {\Rm}_{g_\infty} \Vert_{L^2(X \setminus \{ y_{1,\infty}, \ldots, y_{N,\infty} \})} \leq C_4.
\end{equation}
If $(X, d, x_\infty)$ is even a blowup limit, then $\Ric_{g_\infty} \equiv 0$.
\end{lemma}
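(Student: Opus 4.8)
The plan is as follows. First I would reduce, by parabolic rescaling, to the case $R_0 = 1$, so that $R \le 1$ everywhere and $R \ge -n/(2t)$, and I would set up the rescaled flows $\hat g^{(k)}_s := \lambda_k^2 g_{t_k + \lambda_k^{-2}s}$, which are Ricci flows on $[-\lambda_k^2 t_k, 0]$ with $-\tfrac{n}{2(\lambda_k^2 t_k + s)} \le \hat R^{(k)}(\cdot, s) \le \lambda_k^{-2} \le 1$; in particular $|\hat R^{(k)}| \le n$ on any fixed backward parabolic neighbourhood of a point at parameter $s=0$, for $k$ large. To locate the candidate singular set I would invoke Lemma~\ref{Lem:yit}: it yields a fixed $N$ and, at each time $t_k$, points $y_{1,t_k}, \dots, y_{N, t_k} \in \M$ with $\rrm(\cdot, t_k) > \tfrac16 \min_i d_{t_k}(\cdot, y_{i,t_k})$. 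Passing to a subsequence I would arrange that $\lambda_k d_{t_k}(x_k, y_{i,t_k})$ either converges — so that $y_{i,t_k}$ subconverges to a point $y_{i,\infty} \in X$ — or diverges, keeping only the surviving indices and relabelling; if none survive, the regular set below is all of $X$. The bound $\diam(X,d) > 0$ would follow at once from the uniform lower bound $\diam_{t_k}\M > C_3^{-1}$, which is only magnified under rescaling by $\lambda_k \ge 1$.

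Next I would establish smoothness on $X^{\mathrm{reg}} := X \setminus \{ y_{i,\infty} \}$. For $x \in X^{\mathrm{reg}}$ with $r := \min_i d(x, y_{i,\infty}) > 0$, picking $x_k \in \M$ corresponding to $x$ under the Gromov--Hausdorff approximations, Lemma~\ref{Lem:yit} gives $|\Rm_{\hat g_k}(\cdot, 0)| < (c r)^{-2}$ on a $\hat g_k$-ball of definite radius $\sim r$ about $x_k$, while $|\hat R^{(k)}| \le n$ on a fixed backward parabolic neighbourhood. The key move here is to apply the backward pseudolocality Theorem~\ref{Thm:backwpseudoloc} to upgrade this time-slice curvature bound to a full curvature bound $|\Rm_{\hat g_k}| < K' r^{-2}$ on a backward parabolic neighbourhood $P(x_k, 0, c' r, -(c' r)^2)$ of definite size; Shi's estimates \cite{Sh:1} then bound all covariant derivatives of $\Rm_{\hat g_k}$ there, and non-collapsing (\ref{eq:kappa1}) gives a definite lower volume bound which, together with the curvature bound, yields a uniform lower bound on the injectivity radius at $x_k$ (after restricting to a sub-ball of radius $\min\{c'r, 1\}$ so that the hypothesis of (\ref{eq:kappa1}) is met). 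By Cheeger--Gromov--Hamilton compactness, $(\M, \hat g_k, x_k)$ converges smoothly on $X^{\mathrm{reg}}$ to a smooth Riemannian metric $g_\infty$, and since the singular set is finite — hence removable, being of codimension $4$ — the distance $d$ restricted to $X^{\mathrm{reg}}$ coincides with the length metric induced by $g_\infty$.

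For the two curvature bounds: the pointwise estimate would come from passing to the limit in the rescaled inequality $|\Rm_{g_{t_k}}|(x_k) \le \rrm^{-2}(x_k, t_k) < 36\,(\min_i d_{t_k}(x_k, y_{i,t_k}))^{-2}$, using that $|\Rm_{\hat g_k}|(x_k) \to |\Rm_{g_\infty}|(x)$ by smooth convergence and that the rescaled distances converge; this gives $|\Rm_{g_\infty}|(x) \le 36 \max_i d^{-2}(x, y_{i,\infty})$. The $L^2$-bound would follow from the scale-invariance of $\int_\M |\Rm|^2 dg$ in dimension $4$: (\ref{eq:RmL2}) gives $\int_\M |\Rm_{\hat g_k}|^2\,dg_{\hat g_k} \le C_4$, and Fatou's lemma on compact exhaustions of $X^{\mathrm{reg}}$ yields $\|\Rm_{g_\infty}\|_{L^2(X^{\mathrm{reg}})} \le C_4$.

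Finally, for the blowup case $\lambda_k \to \infty$: here $\sup|\hat R^{(k)}| \to 0$ on every fixed backward parabolic neighbourhood, so the rescaled flows subconverge, near each point of $X^{\mathrm{reg}}$, to a limiting Ricci flow $(g_{\infty,s})_{s \in (-\sigma^2, 0]}$ with $g_{\infty,0} = g_\infty$ and $R_{g_{\infty, s}} \equiv 0$ for all $s$; plugging this into $\partial_s R = \Delta R + 2|\Ric|^2$ forces $|\Ric_{g_\infty}|^2 \equiv 0$. The main obstacle I anticipate is the one already flagged: the curvature control produced by Lemma~\ref{Lem:yit} lives only on a single time slice, whereas Shi's estimates and Ricci-flow compactness require a space-time bound, so a device is needed to propagate it backward in time — backward pseudolocality is exactly what does this, together with the free backward control on the scalar curvature. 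A secondary nuisance is the bookkeeping for which marked points survive in the pointed limit, and, in the blowup case, making sure one extracts an honest limiting Ricci flow rather than merely a Riemannian metric, so that the evolution equation for $R$ may be invoked.
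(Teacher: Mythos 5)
Your proposal follows essentially the same approach as the paper: Lemma~\ref{Lem:yit} supplies the candidate singular set $\{y_{i,t_k}\}$, backward pseudolocality (Theorem~\ref{Thm:backwpseudoloc}) propagates the time-slice curvature control backward in time, Shi's estimates then bound all derivatives, Cheeger--Gromov--Hamilton compactness gives the smooth limit on the regular set, and scale invariance of $\int |\Rm|^2$ in dimension~$4$ gives the $L^2$ bound. The only point where you diverge is the blowup case: the paper concludes $\Ric_{g_\infty}\equiv 0$ directly from Lemma~\ref{Lem:goodRicbound}, which gives $|\Ric|\le C_0 R_0^{1/2}\rrm^{-1}$ in the appropriate rescaled normalisation — since $R_0\sim\lambda_k^{-2}\to 0$ while the rescaled $\rrm$ stays bounded below at a fixed regular point, the bound is killed in the limit. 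You instead extract a genuine limiting Ricci flow on a short backward time interval with $R\equiv 0$ and plug this into $\partial_s R=\Delta R+2|\Ric|^2$. Both routes are valid; the paper's is lighter (no need to produce a limiting flow, just a limiting metric), while yours requires the extra care you rightly flag about obtaining a parabolic limit. Your explicit treatment of which marked points $y_{i,t_k}$ survive the pointed rescaled limit is a correct refinement the paper glosses over.
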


\begin{proof}
Consider the points $y_{1,t_k}, \ldots, y_{N,t_k}$ from Lemma \ref{Lem:yit}.
After passing to a subsequence, we may assume that these points converge to points $y_{1, \infty}, \ldots, y_{N, \infty} \in X$.
By the backward pseudolocality Theorem \ref{Thm:backwpseudoloc} and Shi's estimates, we have uniform bounds on the covariant derivatives of the curvature tensor at uniform distance away from the $y_{i, t_k}$.
So $(\M, g_{t_k})$ smoothly converges to a Riemannian metric $g_\infty$ on $X \setminus \{ y_{1, \infty}, \ldots, y_{N, \infty} \}$.
The curvature bounds descend to the limit.
The vanishing of $\Ric_{g_\infty}$ in the blowup case is a consequence of Lemma \ref{Lem:goodRicbound}.
\end{proof}

Next, we prove that (blowup) limits look like orbifolds.

\begin{lemma} \label{Lem:itsanorbifold}
Let $(\M^4, (g_t)_{t \in [0,T)})$, $T < \infty$ be a Ricci flow on a $4$-dimensional manifold and assume that $R \leq R_0 < \infty$ everywhere.
Let $(X, d, x_\infty)$ be a limit of $(\M^4, (g_t)_{t \in [0,T)})$.
Then the tangent cone at every point of $(X,d)$ is isometric to a finite quotient of Euclidean space $\IR^4$.
So $(X, d)$ is diffeomorphic to an orbifold with cone singularities.
\end{lemma}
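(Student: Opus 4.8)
The plan is to reduce the assertion to a blowup analysis of the smooth limit metric $g_\infty$ near the finitely many singular points $y_{1,\infty},\dots,y_{N,\infty}$ produced by Lemma~\ref{Lem:Xsmoothonreg}, and then to invoke the classical local structure theory of flat $4$-orbifolds. At any $x\in X\setminus\{y_{1,\infty},\dots,y_{N,\infty}\}$ the metric $d$ is induced by the smooth Riemannian metric $g_\infty$, so the tangent cone is $\IR^4=\IR^4/\{1\}$ and there is nothing to do. Hence I would fix $p=y_{i,\infty}$ and let $(Y,d_Y,o)$ be any tangent cone at $p$, that is, a pointed Gromov--Hausdorff limit of $(X,\lambda_j^2 d,p)$ for some $\lambda_j\to\infty$.

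The first step is to recognize $(Y,d_Y,o)$ as a blowup limit of the Ricci flow itself. Since $(X,d,x_\infty)$ is by definition a pointed Gromov--Hausdorff limit of $(\M,\mu_k^2 g_{t_k},x_k)$ with $\mu_k\ge1$ and $t_k\nearrow T$, a diagonal argument produces indices $k=k(j)$ and basepoints $x'_j$ with $d(x'_j,p)\to0$ such that $(\M,(\lambda_j\mu_{k(j)})^2 g_{t_{k(j)}},x'_j)$ converges to $(Y,d_Y,o)$; since $\lambda_j\mu_{k(j)}\ge\lambda_j\to\infty$ this is a blowup limit. Therefore Lemma~\ref{Lem:Xsmoothonreg} applies to $Y$ verbatim: $Y$ is a complete, non-collapsed and non-inflated length space (cf.\ (\ref{eq:kappa1}), (\ref{eq:kappa2})) that is $C^\infty$ and Ricci-flat off a set $\mathrm{sing}(Y)$ of at most $N$ points, and $\Vert\Rm_{g_Y}\Vert_{L^2(Y\setminus\mathrm{sing}(Y))}\le C_4$.

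The decisive step is to upgrade ``$L^2$-bounded'' to ``flat''. The curvature bound $|\Rm_{g_\infty}|\le 36\max_i d^{-2}(\cdot,y_{i,\infty})$ of Lemma~\ref{Lem:Xsmoothonreg} and the uniform covariant-derivative bounds on the curvature away from the $y_{i,\infty}$ (from backward pseudolocality, Theorem~\ref{Thm:backwpseudoloc}, and Shi's estimates) are scale invariant, so after rescaling by $\lambda_j^2$ one has uniform local $C^\infty$ control of $(X,\lambda_j^2 g_\infty)$ on every region at definite distance from the singular set, and the convergence to $g_Y$ there is smooth. Consequently, for each $\delta>0$,
\begin{multline*}
\int_{\{d_Y(\cdot,\mathrm{sing}(Y))>\delta\}}|\Rm_{g_Y}|^2\,dg_Y
= \lim_{j\to\infty}\int_{\{d_{\lambda_j^2 g_\infty}(\cdot,\mathrm{sing})>\delta\}\cap B(p,R)}|\Rm_{g_\infty}|^2\,dg_\infty \\
\le \limsup_{j\to\infty}\int_{B_{g_\infty}(p,\,R/\lambda_j)}|\Rm_{g_\infty}|^2\,dg_\infty = 0,
\end{multline*}
because by (\ref{eq:RmXL2}) the total curvature energy of $g_\infty$ is finite while $B_{g_\infty}(p,R/\lambda_j)$ shrinks to the null set $\{p\}$. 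Letting $\delta\to0$ gives $\Rm_{g_Y}\equiv0$, so $Y\setminus\mathrm{sing}(Y)$ is genuinely flat. (One also reads off from Lemma~\ref{Lem:goodRicbound} that $|\Ric_{g_\infty}|$ decays at most linearly near each $y_{i,\infty}$, which independently forces $Y$ to be Ricci-flat and is what is needed for the refined estimates in Corollary~\ref{Cor:convergenceorbifold}.)

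Finally I would identify $Y$ with $\IR^4/\Gamma$. Near a point $q\in\mathrm{sing}(Y)$ a small punctured ball is a connected incomplete flat $4$-manifold whose metric completion adds back only $q$; since by Lemma~\ref{Lem:Xsmoothonreg} there is no codimension-one singular stratum, the standard local structure of an isolated conical singularity of a flat metric applies: a neighborhood of $q$ is a metric cone $C(S^3/\Gamma_q)$ over a spherical space form, i.e.\ isometric to $B_{\IR^4}(0,\e)/\Gamma_q$ with $\Gamma_q\subset O(4)$ finite and acting freely on $S^3$. Thus $Y$ is a complete flat $4$-orbifold with finitely many isolated cone points; its orbifold universal cover is $\IR^4$, so $Y=\IR^4/G$ for a discrete group $G$ of Euclidean isometries acting properly discontinuously. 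If $G$ contained a nontrivial translation, then by Bieberbach $\IR^4/G$ would split off a compact flat factor and satisfy $\vol B(o,r)/r^4\to0$ as $r\to\infty$, contradicting (\ref{eq:kappa1}); hence $G$ is finite, fixes a point, and $Y=\IR^4/G$ is a flat metric cone over $S^3/G$ --- a finite quotient of $\IR^4$. Together with the trivial case at smooth points, this shows the tangent cone at every point of $X$ is a finite quotient of $\IR^4$, and feeding this (with the smooth convergence of $g_{t_k}$ on $X\setminus\{y_{i,\infty}\}$) into a gluing/removable-singularity argument, as in the proof of Corollary~\ref{Cor:convergenceorbifold}, endows $X$ with the structure of a smooth Riemannian orbifold with cone singularities. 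The step I expect to be the main obstacle is this last one, passing from ``$Y$ flat away from finitely many points'' to ``$Y=\IR^4/\Gamma$'': one must exclude nontrivial topology of the flat part and the formation of necks, which is exactly where both the non-collapsing (\ref{eq:kappa1}) and the non-inflating (\ref{eq:kappa2}) bounds, together with the Bieberbach classification, enter.
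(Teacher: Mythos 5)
Your proposal correctly identifies that tangent cones are blowup limits, correctly deduces via Lemma \ref{Lem:Xsmoothonreg} and the rescaled $L^2$ bound that they are flat away from finitely many points, and correctly uses Bieberbach to rule out translations. But there is a genuine gap at the crucial step where you write ``near a point $q\in\mathrm{sing}(Y)$ a small punctured ball is a \emph{connected} incomplete flat $4$-manifold whose metric completion adds back only $q$.'' That connectedness is precisely what must be proved and is the whole substance of the paper's argument. A priori the tangent cone at a singular point could have a \emph{disconnected} link: e.g.\ two Euclidean cones $C(S^3/\Gamma_1)$ and $C(S^3/\Gamma_2)$ glued only at the vertex. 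Such a space is flat away from one point, has a single isolated singularity, satisfies both the non-collapsing (\ref{eq:kappa1}) and non-inflating (\ref{eq:kappa2}) volume bounds if the constants are not sharp, and is \emph{not} an orbifold. Your subsequent reasoning (``its orbifold universal cover is $\IR^4$, so $Y=\IR^4/G$\,\dots'') presupposes that $Y$ is already an orbifold, so the Bieberbach step cannot rescue the argument; it addresses the global structure of a flat orbifold, not the local topology at the singular point. You acknowledge this obstacle in your final sentence, but the tools you name there (volume bounds plus Bieberbach) do not close it.

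The paper closes this gap by a contradiction argument that you have not reproduced: assuming a disconnected link, one produces a minimizing geodesic in $X$ forced to pass through $y_{i,\infty}$, blows up along it at the scale where $\rrm$ is minimal to obtain a Ricci-flat blowup limit $(X',d')$ containing a genuine geodesic \emph{line} missing the singular set, and then runs the Cheeger--Gromoll splitting argument on the ``unglued'' completion $(\overline X',\overline d')$ (on which minimizing geodesics can only hit singular points at endpoints). The resulting parallel-gradient function forces $(\overline X',\overline d')$ to be smooth, hence $(X',d')$ splits off a line and is flat without singular points, contradicting the normalization $\rrm(x'_\infty)=1$. This splitting mechanism is the ingredient missing from your proposal; without it, the identification of tangent cones with $\IR^4/\Gamma$ is not justified.
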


\begin{proof}
It follows from (\ref{eq:RmXL2}) that the tangent cones around any $y_{i, \infty}$ are flat away from the tip and unique.
It remains to show that the link (i.e. cross-section) of each of these tangent cones is connected or, in other words, that the cone minus the tip is connected.

Fix some $i \in \{ 1, \ldots, N \}$ and assume that the link of the tangent cone around $y_{i, \infty}$ was not connected.
Choose points $z', z'' \in X$ close to $y_{i, \infty}$, but in regions corresponding to different components in the link, such that any minimizing geodesic between $z'$ and $z''$ has to pass through $y_{i, \infty}$.
Let $z'_k, z''_k \in (\M, \lambda_k^2 g_{t_k}, x_k)$ be sequences such that $z'_k \to z'$ and $z''_k \to z''$ and let $\gamma_k \subset \M$ be time-$t_k$ minimizing geodesics between $z'_k$ and $z''_k$.
Then, after passing to a subsequence, the $\gamma_k$ converge to a minimizing geodesic $\gamma_\infty \subset X$ passing through $y_{i, \infty}$.
It follows that $\min_{\gamma_k} \rrm (\cdot, t_k) \to 0$ (where $\rrm ( \cdot, t_k)$ is determined with respect to $\lambda_k^2 g_{t_k}$), because otherwise $X$ would be smooth around $y_{i, \infty}$.
Let $x'_k \in \gamma_k$ such that $\rrm (x'_k, t_k) = \min_{\gamma_k} \rrm (\cdot, t_k)$ and set $\lambda'_k := \rrm^{-1} (x'_k, t_k)$.
Then, after passing to a subsequence, $(\M, \lambda^{\prime 2}_k g_{t_k}, x'_k)$ converges to a blowup limit $(X', d', x'_\infty)$ containing a geodesic line $\gamma'_\infty \subset X'$, which passes through $x'_\infty$.
By Lemma \ref{Lem:Xsmoothonreg}, $(X', d')$ is described by a Riemannian metric $g'_\infty$ away from finitely many singular points $y'_{1, \infty}, \ldots, y'_{N', \infty} \in X'$, which is Ricci flat.
By the choice of $\lambda'_k$, we know that $|{\Rm}_{g'_\infty} | \leq 1$ in a $1$-neighborhood around $\gamma'_\infty$ and that $\gamma'_\infty$ does not hit any singular points.
We also get that $\rrm (x'_\infty) = 1$ if $(X', d')$ is smooth.

Denote by $(\overline{X}', \overline{d}')$ the completion of length metric induced by the Riemannian metric $g'_\infty$ on $X' \setminus \{ y'_{1, \infty}, \ldots, y'_{N', \infty} \}$.
So $(\overline{X}', \overline{d}')$ arises from $(X', d')$ by ``splitting the tangent cones around singular points into tangent cones with connected links''.
So the tangent cones of $(\overline{X}', \overline{d}')$ are finite quotients of $\IR^4$.
It follows that any minimizing geodesic in $(\overline{X}', \overline{d}')$ can only hit singular points at its endpoints.
Recall that $\Ric_{g'_\infty} \equiv 0$ and that $(\overline{X}', \overline{d}')$ contains a line.
The proof of the Cheeger-Gromoll splitting theorem (cf \cite{CG}) still works in this setting and we obtain a non-constant smooth function $f \in C^\infty (X' \setminus \{ y'_{1, \infty}, \ldots, y'_{N', \infty} \})$ with parallel gradient.
The existence of such a function implies that $(\overline{X}', \overline{d}')$ has no singular points and hence $(X', d') \cong (\overline{X}', \overline{d}')$ isometrically splits off a line.
So $(X', d')$ is flat and has no singular points, contradicting our earlier conclusions.
This finishes the proof.
\end{proof}

We can finally prove Corollary \ref{Cor:convergenceorbifold}.

\begin{proof}[Proof of Corollary \ref{Cor:convergenceorbifold}]
We define the function $\rrm^\infty : \M \to [0, \infty)$ as follows:
\[ \rrm^\infty (x) = \limsup_{t \nearrow T} \rrm (x,t). \]
Consider a point $x \in \M$ with $r := \rrm^\infty (x) > 0$.
So there is a sequence of times $t_k \nearrow T$ such that $\rrm (x, t_k) > r /2$.
Using the backward pseudolocality Theorem \ref{Thm:backwpseudoloc}, we find that
\[ |{\Rm}| < 4 Kr^{-2} \qquad \text{on} \qquad  \bigcup_{k=1}^\infty P(x,t_k, \tfrac12 \eps r, - (\tfrac12 \eps r)^2). \]
By a distance distortion estimate, there is a small $\tau > 0$, depending on $r$, such that
\[ P (x,T-\tau, \tfrac18 \eps r, \tau) \subset \bigcup_{t \in [T-\tau, T)} B(x,t, \tfrac14 \eps r) \times \{ t \} \subset  \bigcup_{k=1}^\infty P(x,t_k, \tfrac12 \eps r, - (\tfrac12 \eps r)^2). \]
So as $t \nearrow T$, the metric $g_t$ converges smoothly to a Riemannian metric $g_T$ in a neighborhood of $x$.
Moreover, $\rrm^\infty$ is positive in a neighborhood of $x$.
We hence obtain that $\M^{\textnormal{reg}} := \{ \rrm^\infty > 0 \} \subset \M$ is an open subset and that $g_t$ converges smoothly to a Riemannian metric $g_T$ on $\M^{\textnormal{reg}}$ as $t \nearrow T$.
Moreover, $\rrm^\infty$ restricted to $\M^{\textnormal{reg}}$ is $1$-Lipschitz with respect to $g_T$, being the limsup of a family of $1$-Lipschitz functions.
So by continuity of the metric $g_t$, for any $x \in \M^{\textnormal{reg}}$ and $\delta > 0$ the ball $B(x,t,\rrm^\infty(x) - \delta)$ is contained in $\M^{\textnormal{reg}}$ for $t$ sufficiently close to $T$.
We also have $|{\Rm}| (\cdot, T) \leq (\rrm^\infty (x))^{-2}$ on $B(x,t,\rrm^\infty(x) - \delta)$ for each $\delta > 0$.
So 
\[ \rrm^\infty (x) = \lim_{t \nearrow T} \rrm (x,t) \qquad \text{for all} \qquad x \in \M. \]

Next, we show that $\M^{\textnormal{sing}} := \M \setminus \M^{\textnormal{reg}} = \{ \rrm^\infty = 0 \}$ is a null set with respect to $g_t$ for any $t \in [0,T)$.
Recall that $C_1^{-1} dg_{t_1} < dg_{t_2} < C_1 dg_{t_1}$ for any $t_1, t_2 \in [0, T)$.
So it suffices to show that $\M^{\textnormal{sing}}$ is a null set with respect to $dg_0$.
Since $\lim_{t \nearrow T} \rrm (\cdot, t) = 0$ on $\M^{\textnormal{sing}}$, we have with (\ref{eq:rrmvolboundC2}) that for any $0 < s < 1$
\begin{multline*}
 |\M^{\textnormal{sing}} |_0 \leq |\{ \rrm^\infty < s \} |_0 =  |\{ \limsup_{t \nearrow T} \rrm (\cdot, t) < s \}|_0 = \limsup_{t \nearrow T} |\{\rrm (\cdot, t) < s \}|_0 \\
 \leq C_1 \limsup_{t \nearrow T} |\{\rrm (\cdot, t) < s \}|_t \leq C_1 C_5 s^4.
\end{multline*}
Letting $s \to 0$ yields $|\M^{\textnormal{sing}}|_0 = 0$, which establishes assertion (b).

Consider now the points $y_{1,t}, \ldots, y_{N, t}$ from Lemma \ref{Lem:yit}.
We claim that for any $0 < s < 1$ and any $t < T$ sufficiently close to $T$, we have
\begin{equation} \label{eq:Msingballs}
 \M^{\textnormal{sing}} \subset \bigcup_{i=1}^N B(y_{i,t}, t, s).
\end{equation}
Fix $s$ and choose $s' > 0$ such that $s' < \frac1{40} s$ and $| \{ \rrm^\infty < s' \} |_t < \kappa_1 (\tfrac1{2} s)^4$ for any $t \in [0,T)$ (this is possible because $\M^{\textnormal{sing}}$ is a nullset and the volume form $dg_t$ has bounded distortion).
Define $U := \{ \rrm^\infty \geq s' \} \subset \M$ and choose $\tau > 0$ small enough such that $\rrm > \frac12 s'$ on $U \times [T-\tau, T)$ and $\rrm < 2s'$ on $\partial U \times [T-\tau, T)$.
Let now $t \in [T-\tau, T)$ and $x \in \M^{\textnormal{sing}} \subset \M \setminus U$.
We claim that there is a point $z \in \partial U$ such that $d_t (x,z) < \frac1{2} s$.
Otherwise $B(x, t, \tfrac1{2} s) \cap U = \emptyset$, which would imply that
\[ \kappa_1 (\tfrac1{2} s)^4 < | B(x,t,s) |_t \leq | \M \setminus U |_t < \kappa_1 (\tfrac1{2} s)^4, \]
which is a contradiction.
Since $\rrm (z,t) < 2 s' < \tfrac1{20} s$, there is an $i \in \{ 1, \ldots, N \}$ such that $d_t (z, y_{i,t}) <  6 \cdot \frac{1}{20} s < \frac1{2} s$, by Lemma \ref{Lem:yit}.
It follows that $x \in B(y_{i,t}, t, s)$.
This proves (\ref{eq:Msingballs}).

Inclusion (\ref{eq:Msingballs}) implies that $(\M, g_t)$ converges to a metric space $(X, d)$ in the Gromov-Hausdorff sense, which is isometric to $(\M^{\textnormal{reg}}, g_\infty)$ away from at most $N$ points.
By Lemma \ref{Lem:itsanorbifold}, the tangent cones of $(X,d)$ are finite quotients of $\IR^4$.
It follows that $\M^{\textnormal{reg}}$ is connected and that $(X,d)$ is isometric to the metric completion $(\overline{\M}^{\textnormal{reg}}, d^{g_T})$ of $(\M^{\textnormal{reg}}, g_T)$.
Hence $\overline{\M}^{\textnormal{reg}}$ can be endowed with a smooth orbifold structure such that $g_T$ is a smooth metric away from the singular points.
This shows assertion (a)--(d).

Let us now show assertion (e).
Fix a singular point $p \in \overline{\M}^{\textnormal{reg}}$ and denote by $r(x) := d^{g_T} (p, x)$ the radial distance.
Since blow-ups around $p$ smoothly converge to a finite quotient of $\IR^4$, we find that $|\nabla^m {\Rm}| < o(r^{-2-m})$.
So $\rrm^\infty (x) > c r(x)$ close to $p$ for some uniform $c > 0$.
So Lemma \ref{Lem:goodRicbound} implies that $|\nabla^m {\Ric}| < C( \rrm^\infty)^{-1-m} < O(r^{-1-m})$ for all $m \geq 0$.
We now construct the orbifold metric $\overline{g}_\eps$ for some given $\eps$, following the lines of \cite{BKN}.
Denote the tangent cone of $(\overline{\M}^{\textnormal{reg}}, d^{g_T})$ at $p$ by $(\M^{\textnormal{tang}} = \IR^4 / \Gamma, g_{\textnormal{eucl}})$, $\Gamma \subset O(4)$, let $p' \in \M^{\textnormal{tang}}$ be its vertex and $r' : \M^{\textnormal{tang}} \to [0, \infty)$ the Euclidean distance from its tip.
Define the annuli
\[ A' (r_1, r_2) := \{ x \in \M^{\textnormal{tan}} \;\; : \;\; r_1 < r'(x) < r_2 \} \subset \M^{\textnormal{tan}} \]
and
\[ A (p, r_1, r_2) := \{ x \in \M^{\textnormal{reg}} \;\; : \;\; r_1 < d^{g_T} (p, x) < r_2 \} \subset \M^{\textnormal{reg}}  \]
and set for $i = 0, 1, \ldots$
\[ A_i := A(p, 2^{-i-1}, 2^{-i+1}). \]
Since the tangent cone of $(\overline{\M}^{\textnormal{reg}}, d^{g_T})$ at $p$ is isometric to $(\M^{\textnormal{tang}}, g_{\textnormal{eucl}})$ we can find a sequence of Riemannian metrics $\overline{g}^i$ on each $A_i$ such that the following holds for large $i$:
There is an isometry $\Phi_i : A_i \to U_i \subset \M^{\textnormal{tang}}$ with $\overline{g}^i = \Phi_i^* g_{\textnormal{eucl}}$ such that
\[ A' (2^{-i-0.9}, 2^{-i+0.9} ) \subset U_i \subset A' (2^{-i-1.1}, 2^{-i+1.1}) \]
and such that
\[ \lim_{i \to \infty} \Vert g_T - \overline{g}^i \Vert_{C^0 (A_i; \overline{g}^i)} = 0. \]
More specifically, since $|\nabla^m {\Ric}| < O(r^{-1-m})$, we can use a harmonic coordinate construction similar to the one carried out in \cite{BKN}, and choose the metrics $\overline{g}^i$ such that
\begin{align*}
 \Vert g_T - \overline{g}^i & \Vert_{C^0 (A_i; \overline{g}^i)} + 2^{-i} \Vert  \partial (g_T - \overline{g}^i) \Vert_{C^0 (A_i; \overline{g}^i)}  + (2^{-i} )^2 \Vert \partial^2(g_T - \overline{g}^i) \Vert_{C^0 (A_i; \overline{g}^i)}  \\
& \leq C (2^{-i})^2 \Vert {\Rm} \Vert_{L^\infty (A_{i-1} \cup A_i \cup A_{i+1}; g_T)} 
 + C (2^{-i})^2 \Vert  {\Ric} \Vert_{L^\infty (A_{i-1} \cup A_i \cup A_{i+1}; g_T)} \\
 &\qquad +  C (2^{-i})^3 \Vert  \nabla {\Ric} \Vert_{L^\infty (A_{i-1} \cup A_i \cup A_{i+1}; g_T)} \\
 &\leq  C (2^{-i})^2 \Vert {\Rm} \Vert_{L^\infty (A_{i-1} \cup A_i \cup A_{i+1}; g_T)}  + C 2^{-i} =: \delta_i \to 0, 
\end{align*}
for some generic constant $C < \infty$.
So the change of coordinate maps restricted to the annuli $A'(2^{-i-0.9}, 2^{-i-0.1})$
\[  \Phi_{i+1}  \circ \Phi_{i}^{-1} : A'(2^{-i-0.9}, 2^{-i-0.1})  \to A'(2^{-i-2.1}, 2^{-i+0.1}) \]
can be approximated by isometries $\psi_i : \M^{\textnormal{tang}} \to \M^{\textnormal{tang}}$, $\psi_i^* g_{\textnormal{eucl}} = g_{\textnormal{eucl}}$ in such a way that
\begin{equation} \label{eq:psiPhiPhi}
 \Vert \psi_{i+1}^{-1} \circ \Phi_{i+1}  \circ \Phi_{i}^{-1} \Vert_{C^3(A'(2^{-i-0.8}, 2^{-i-0.2}))} \leq C \delta_i 
\end{equation}
for some uniform constant $C < \infty$.
Replace now $\Phi_i$ by $\psi_1^{-1} \circ \ldots \circ \psi_i^{-1} \circ \Phi_i$.
Then still $\overline{g}^i = \Phi_i^* g_{\textnormal{eucl}}$ and (\ref{eq:psiPhiPhi}) simplifies to 
\[ \Vert \Phi_{i+1}  \circ \Phi_{i}^{-1} \Vert_{C^3(A'(2^{-i-0.8}, 2^{-i-0.2}))} \leq C \delta_i \]
So it is possible to glue these diffeomorphisms together to a diffeomorphism $\Phi : U \setminus \{ p \} \to V \setminus \{ p' \}$, where $U \subset \overline{\M}^{\textnormal{reg}}$ is an open neighborhood of $p$ and $V \subset \M^{\textnormal{tan}}$ is an open neighborhood of $p'$, such that the pullback $\overline{g} := \Phi^* g_{\textnormal{eucl}}$ satisfies
\begin{multline} \label{eq:ovgannuli}
  \Vert g_T - \overline{g} \Vert_{C^0 (A_i; \overline{g})} + 2^{-i} \Vert  \partial (g_T - \overline{g}) \Vert_{C^0 (A_i; \overline{g})}  + (2^{-i} )^2 \Vert \partial^2(g_T - \overline{g}) \Vert_{C^0 (A_i; \overline{g})} \\ \leq C (\delta_{i-1} + \delta_i + \delta_{i+1} ). 
\end{multline}

We now show that $\Vert g_T - \overline{g} \Vert_{W^{2,2} (U; \overline{g})} < \infty$.
By (\ref{eq:ovgannuli}) it suffices to show that $\sum_{i=1}^\infty \delta_i^2 < \infty$.
To see this, note that by the bound $|\nabla^m {\Ric}| < O(r^{-1-m})$ and Anderson's $\eps$-regularity theorem (cf \cite{Anderson-epsreg}) we have for large $i$
\begin{align*}
 \Vert {\Rm} \Vert_{L^\infty (A_{i-1} \cup A_i \cup A_{i+1} ; g_T )} &\leq C(2^{-i})^{-2} \Vert {\Rm} \Vert_{L^2 (A_{i-2} \cup \ldots \cup A_{i+2}; g_T)} \\
 &\qquad\qquad\qquad + C \Vert {\Ric} \Vert_{L^\infty (A_{i-2} \cup \ldots \cup A_{i+2}; g_T)} \\
 &\leq C(2^{-i})^{-2} \Vert {\Rm} \Vert_{L^2 (A_{i-2} \cup \ldots \cup A_{i+2}; g_T)} + C (2^{-i})^{-1}.
\end{align*}
So
\[ \delta_i^2 \leq C \Vert {\Rm} \Vert_{L^2 (A_{i-2} \cup \ldots \cup A_{i+2}; g_T)}^2 + C (2^{-i})^2, \]
which implies $\sum_{i=1}^\infty \delta_i^2 < \infty$.

We finally construct the metric $\overline{g}_\eps$.
For this let $i_0 < \infty$ be some large integer whose value we will determine later, depending on $\eps$ and set $\rho := 2^{-i_0}$.
Choose a smooth cutoff function $\eta : \M^{\textnormal{reg}} \to [0,1]$ such that $\eta \equiv 1$ on $B(p, \rho)$ and $\eta \equiv 0$ outside $B(p, 2 \rho)$ and such that $|\nabla \eta| < C \rho^{-1}$ and $|\nabla^2 \eta| < C \rho^{-2}$ for some $C < \infty$ that does not depend on $\rho$.
Then set 
\[ \overline{g}_\eps := \eta \overline{g} + ( 1- \eta  ) g_T. \]
Using (\ref{eq:ovgannuli}) we one can see that
\[ \Vert g_T - \overline{g}_\eps \Vert_{C^0(A_i; \overline{g}_\eps)} \leq C \sup \{ \delta_{i_{0}-1}, \delta_{i_0 }, \ldots \} \] 
and
\begin{multline*}
 \Vert g_T - \overline{g}_\eps \Vert_{W^{2,2}(A_i; \overline{g}_\eps)} \leq C  \Vert g_T - \overline{g} \Vert_{W^{2,2}(A_{i_0} \cup A_{i_0 + 1} \cup \ldots ; \overline{g})} + C \rho^{-1} \Vert \partial (g_T - \overline{g}) \Vert_{L^2(A_{i_0}  ; \overline{g})} \\
 + C \rho^{-2} \Vert g_T - \overline{g} \Vert_{L^2(A_{i_0}  ; \overline{g})}  \\
\leq C  \Vert g_T - \overline{g} \Vert_{W^{2,2}(A_{i_0} \cup A_{i_0 + 1} \cup \ldots ; \overline{g})} + C \rho^{-1} \Vert \partial (g_T - \overline{g}) \Vert_{L^2(A_{i_0}  ; \overline{g})} + C ( \delta_{i_0-1}+ \delta_{i_0}+ \delta_{i_0+1}).
\end{multline*}
By choosing $i_0$ sufficiently large, we can ensure that the the right-hand sides of both inequalities smaller than $\eps/2$.
This proves assertion (e) and hence the corollary.
\end{proof}

\section*{Acknowledgements}
The first author would like to thank Simon Brendle, Panagiotis Gianniotis, John Lott, Yong Wei.
The second author would like to thank Professors Xiuxiong Chen, Gang Tian, Bing Wang, Zhenlei Zhang and Xiaohua Zhu for helpful conversations.

\end{document}